\definecolor{xdxdff}{rgb}{0,0,0}
\definecolor{qqqqff}{rgb}{0,0,0}
\definecolor{uuuuuu}{rgb}{0,0,0}
\definecolor{ududff}{rgb}{0,0,0}
\date{ }
\theoremstyle{definition}
\newtheorem{definition}{Definition}
\newtheorem{remark}[definition]{Remark}
\newtheorem{example}[definition]{Example}
\newtheorem{theorem}[definition]{Theorem}
\newtheorem{lemma}[definition]{Lemma}
\newtheorem{corollary}[definition]{Corollary}
\title{\bf Some Mixed Graphs Determined by \linebreak Their Spectrum}
\author{\bf {S. Akbari$^{{\rm a}}$\footnote{E-mail addresses:
		s$\_$akbari@sharif.edu (S. Akbari), ghafaribaghestani$\_$a@mehr.sharif.edu (A. Ghafari), mnahvi2@illinois.edu (M. Nahvi), mohammadali.nematollahi69@student.sharif.edu (M.A. Nematollahi).}, A. Ghafari$^{{\rm a}}$, M. Nahvi$^{{\rm b}}$ and M.A. Nematollahi$^{{\rm a}}$}
\\[2mm]
${\rm ^{a}}$\small Department of Mathematical Sciences, Sharif University
of Technology, Tehran, Iran\\
${\rm ^{b}}$\small Department of Mathematics, University of Illinois at Urbana-Champaign, IL, USA}
\begin{document}

\maketitle
\begin{abstract}
	A mixed graph is obtained from a graph by orienting some of its edges. The Hermitian adjacency matrix of a mixed graph with the vertex set $ \{v_{1}, \ldots , v_{n}\} $, is the matrix $ H=[h_{ij}]_{n \times n} $, where $ h_{ij}=-h_{ji}=i $ if there is a directed edge from $ v_{i} $ to $ v_{j} $, $ h_{ij}=1 $ if there exists an undirected edge between $ v_{i} $ and $ v_{j} $, and $ h_{ij}=0 $ otherwise. The Hermitian spectrum of a mixed graph is defined to be the spectrum of its Hermitian adjacency matrix.
	
	In this paper we study mixed graphs which are determined by their Hermitian spectrum (DHS). First, we show that each mixed cycle is switching equivalent to either a mixed cycle with no directed edges ($C_{n}$), a mixed cycle with exactly one directed edge ($C_{n}^{1}$), or a mixed cycle with exactly two consecutive directed edges with the same direction ($C_{n}^{2}$) and we determine the spectrum of these three types of cycles. Next, we characterize all DHS mixed paths and mixed cycles. We show that all mixed paths of even order, except $P_{8}$ and $P_{14}$, are DHS. It is also shown that mixed paths of odd order, except $P_{3}$, are not DHS. Also, all cospectral mates of $P_{8}$, $P_{14}$ and $P_{4k+1}$  and two families of cospectral mates of $P_{4k+3}$, where $k\geq1$, are introduced. Finally, we show that the mixed cycles $C_{2k}$ and $C_{2k}^{2}$, where $k\geq3$, are not DHS, but the mixed cycles $C_{4}$, $C_{4}^{2}$, $C_{2k+1}$, $C_{2k+1}^{2}$, $C_{2k+1}^{1}$ and $C_{2j}^{1}$ except $C_{7}^{1}$, $C_{9}^{1}$, $C_{12}^{1}$ and $C_{15}^{1}$, are DHS, where $k\geq1$ and $j\geq2$.
\end{abstract}
\vskip 3mm

\noindent{\bf Keywords: }Mixed Graphs, Cycle, Path, Hermitian Spectrum.
\vskip 3mm

\noindent{\bf 2010 AMS Subject Classification Number:} 05C50, 05C38.
\section{Introduction and Terminology}
In this paper all graphs we consider are simple and finite. A \textit{mixed graph} is obtained from an undirected graph by orienting a subset of its edges. Formally, a mixed graph $ X $ is given by its vertex set $ V(X) $, the set $ E_{0}(X) $ of undirected edges and the set $ E_{1}(X) $ of directed edges. So, $ E(X)= E_{0}(X) \cup E_{1}(X) $, where $E(X)$ is the edge set of $X$, and we distinguish undirected edges as unordered pairs $xy$ of vertices, while directed edges are shown as ordered pairs $(x,y)$ of vertices, where the direction of the edge is from $x$ to $y$. The underlying graph of a mixed graph $ X $ is denoted by $ G(X) $. \textit{Order}, \textit{size} and \textit{maximum degree} of a mixed graph are defined to be the order, size and maximum degree of its underlying graph, respectively. A mixed graph is called a \textit{mixed walk}, \textit{mixed path} or \textit{mixed cycle} if its underlying graph is a walk, path or
cycle, respectively. For $A \subseteq V(X)$, the induced graph on the vertices of $A$ is denoted by $\langle A\rangle$.

The \textit{Hermitian adjacency matrix} of a mixed graph $ X $ of order $n$, denoted by $H(X)_{n\times n}$, is given by
	\[
H(X)_{ij}=
\begin{cases}
1 & \text{if $v_{i}v_{j}\in E_{0}(X)$} \\
i & \text{if $(v_{i},v_{j}) \in E_{1}(X)$} \\
-i &  \text{if $(v_{j},v_{i}) \in E_{1}(X)$ }\\
0 & \text{otherwise}
\end{cases},
\]
 where $V(X)=\{v_{1},\ldots,v_{n}\}$ and $ i^{2}=-1 $. The Hermitian spectrum of $ X $, or simply the spectrum of $X$, denoted by $ Spec_{H}(X) $, is defined as the spectrum of $ H(X) $. It is evident that $ H(X) $ is a Hermitian matrix. Therefore, all its eigenvalues are real (See \cite[p.178]{lt}). The largest eigenvalue of $X$ is denoted by $\lambda_{1}(X)$. The mixed graph $ X $ is a \textit{cospectral mate} of the mixed graph $ Y $ if $ \text{Spec}_{H}(X)= \text{Spec}_{H}(Y) $. By \cite{guom}, two cospectral mates have the same order and size. The mixed graph $X$ is said to be $S$\textit{-out}, where $S\subset\mathbb{R}$, if $X$ has an eigenvalue not belonging to $S$.\\
 The \textit{value} of a mixed walk $W=v_{1}e_{12}v_{2} \cdots e_{(l-1)l} v_{l}$ is $ h(W)=h_{12}h_{23}\cdots h_{(l-1)l} $, where $H(W)=[h_{ij}]$. Note that for the walk $\overline{W}=v_{l}e_{l(l-1)}v_{l-1}\cdots v_{1}$, we have $h(\overline{W})=\overline{h(W)}$. Thus, if the value of a mixed cycle is $1$ (resp. $-1$) in one direction, then its
 value is the same for the reverse direction. We simply call such a mixed cycle a \textit{positive} (resp. \textit{negative}) mixed cycle. A mixed graph is called \textit{real} if all its cycles have real values. An \textit{elementary mixed graph} is a mixed graph whose each component is $K_{2}$ or a
 mixed cycle. The \textit{rank} and the \textit{corank} of a mixed
 graph $ X $, denoted by $r(X)$ and $s(X)$, respectively, are defined as
 \begin{equation*}
 r(X)=n-c \text{~~and~~} s(X)=m-n+c,
 \end{equation*}
 where $n$, $m$ and $c$ are the order, size and the number of components of $ X $, respectively.
 
 The characteristic polynomial of a mixed graph $ X $ is defined as $ \phi(X,\lambda)=\det (\lambda I-H(X))= \lambda^{n}+c_{1}\lambda^{n-1}+ \cdots + c_{n} $. We have
 \begin{theorem}\cite{liuli}\label{tcp}
 	Let $X$ be a mixed graph. The coefficients of $ \phi(X,\lambda)$ are given by
 	\begin{equation}\label{eccpm}
 	(-1)^{k}c_{k} =\sum_{H}(-1)^{r(H)+l(H)}2^{s(H)},
 	\end{equation}
 	where the summation is over all real elementary subgraphs $ H $ of $ X $ of order $ k $ and
 	$ l(H) $ denotes the number of negative mixed cycles of $ H $.
 \end{theorem}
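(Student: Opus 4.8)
The plan is to adapt the classical Harary--Sachs coefficient theorem to the Hermitian setting, the only genuinely new phenomenon being the cancellation forced by non-real cycle values. I would start from the fact that, writing $H=H(X)$, the coefficient of $\lambda^{n-k}$ satisfies
\[
(-1)^{k}c_{k}=\sum_{|S|=k}\det\bigl(H[S]\bigr),
\]
the sum running over all principal $k\times k$ submatrices $H[S]$ of $H$; this is the standard relation between the characteristic polynomial and the sums of principal minors (equivalently, the elementary symmetric functions of the eigenvalues). Thus it suffices to identify, for each fixed $S$, the combinatorial content of $\det(H[S])$.

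Next I would expand each principal minor through the Leibniz formula, $\det(H[S])=\sum_{\sigma}\mathrm{sgn}(\sigma)\prod_{i\in S}H_{i\sigma(i)}$. Since a mixed graph has zero diagonal, any $\sigma$ with a fixed point contributes $0$; hence only fixed-point-free $\sigma$ survive, and for a nonzero product every cycle of $\sigma$ must run along actual edges of $X$. A $2$-cycle $(a\,b)$ then forces an edge $ab$ and contributes $H_{ab}H_{ba}=|H_{ab}|^{2}=1$ (this holds for an undirected edge, where the factor is $1\cdot1$, and for a directed edge, where it is $i\cdot(-i)=1$), so $2$-cycles behave like $K_{2}$-components. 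A $\sigma$-cycle of length $\geq3$ traverses a mixed cycle of $X$ and contributes its value $h(C)$ in the chosen direction. Consequently the fixed-point-free permutations on $S$ with nonzero product correspond precisely to the elementary subgraphs $F$ on vertex set $S$, each cycle component of length $\geq3$ arising from exactly two permutation-cycles, namely its two orientations.

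I would then compute the total contribution of one elementary subgraph $F$ on $S$, with $p$ copies of $K_{2}$ and cycle components $C_{1},\dots,C_{q}$ of lengths $\ell_{1},\dots,\ell_{q}$. A key simplification is that $\mathrm{sgn}(\sigma)$ is the same for all $2^{q}$ orientation choices: each transposition contributes $-1$ and each $\ell_{j}$-cycle contributes $(-1)^{\ell_{j}-1}$, so $\mathrm{sgn}(\sigma)=(-1)^{\,p+\sum_{j}(\ell_{j}-1)}=(-1)^{r(F)}$. Because the orientation choices are independent across cycles and the transpositions are fixed, summing the products factorizes as
\[
\sum_{\text{orientations}}\mathrm{sgn}(\sigma)\prod_{i\in S}H_{i\sigma(i)}=(-1)^{r(F)}\prod_{j=1}^{q}\bigl(h(C_{j})+\overline{h(C_{j})}\bigr)=(-1)^{r(F)}\prod_{j=1}^{q}2\,\mathrm{Re}\bigl(h(C_{j})\bigr).
\]

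Finally I would observe that every mixed-cycle value is a power of $i$, hence lies in $\{1,-1,i,-i\}$, so $2\,\mathrm{Re}(h(C_{j}))$ is $0$ when $h(C_{j})$ is non-real and is $\pm2$ (namely $+2$ for a positive cycle, $-2$ for a negative one) when $h(C_{j})$ is real. Therefore any $F$ possessing a non-real cycle contributes $0$, which is exactly why the sum in \eqref{eccpm} ranges only over real elementary subgraphs; and for a real $F$ the cycle product equals $(-1)^{l(F)}2^{q}=(-1)^{l(F)}2^{s(F)}$, since $q=s(F)$ and $l(F)$ counts the negative cycles. Collecting the factors gives the contribution $(-1)^{r(F)+l(F)}2^{s(F)}$ of each real elementary subgraph, and summing over those of order $k$ yields $(-1)^{k}c_{k}$, as claimed. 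The main obstacle is precisely this orientation-and-sign bookkeeping: verifying that the permutation sign collapses to $(-1)^{r(F)}$ independently of orientation, and that pairing the two orientations of each cycle is what both produces the factor $2^{s(F)}$ and annihilates the non-real cycles.
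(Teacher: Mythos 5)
Your proof is correct: the reduction to principal minors, the Leibniz expansion with the fixed-point-free permutations, the sign computation $\mathrm{sgn}(\sigma)=(-1)^{r(F)}$, the pairing of the two orientations of each cycle giving $2\,\mathrm{Re}(h(C_j))$, and the resulting vanishing of non-real elementary subgraphs together with the factor $(-1)^{l(F)}2^{s(F)}$ for real ones are all accurate. The paper itself gives no proof of this theorem (it is quoted from the cited reference of Liu and Li), and your argument is essentially the standard Harary--Sachs adaptation used there, so there is nothing to fault.
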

 A \textit{signed graph} is a pair, say $\Gamma=(G,\sigma)$, where $ G $ is the underlying graph and $\sigma : E(G) \rightarrow \lbrace -1,+1 \rbrace$ is a sign function. Let $ \lbrace v_{1}, \ldots , v_{n} \rbrace $ be the vertices of $ \Gamma $. Then, the \emph{signed adjacency matrix} of $ \Gamma $, $ A(\Gamma)_{n\times n}$, or simply $ A $, is defined as $ A_{ij}= \sigma(v_{i}v_{j})a_{ij} $, where $ a_{ij}=1 $ if $ v_{i} $ and $ v_{j} $ are adjacent and $ a_{ij}=0 $, otherwise. The polynomial $ \phi(\Gamma,\lambda)=\det(\lambda I-A)=\lambda^{n}+a^{\sigma}_{1}\lambda^{n-1}+ \cdots + a^{\sigma}_{n-1}\lambda +a^{\sigma}_{n}  $ is called the \emph{characteristic polynomial} of $ \Gamma $. An \emph{elementary signed graph} is a signed graph whose each component is $K_{2}$ or a cycle. By arguments similar to unsigned graphs as stated in \cite{gre}, we have:
 \begin{equation}\label{eza}
 a^{\sigma}_{k}=\sum_{ U \in \, \mathcal{U}_{k}} (-1)^{\vert U \vert} 2^{t(U)} \sigma(U),
 \end{equation}
 where $ \mathcal{U}_{k} $ is the set of all elementary subgraphs of $ \Gamma $ of order $ k $, $ \vert U \vert $ is the number of components of $ U $, $ t(U) $ is the number of cycles in $U$ and, $ \sigma(U) $ is the product of the sign of all cycles of $U$ (if $ U $ has no cycles, then we define $ \sigma(U)=1 $). A connected graph with a unique cycle is called a unicyclic graph. The signed unicyclic graph with exactly one negative edge in its cycle is denoted by $(G,-)$, where $G$ is its underlying graph.
 
 A \textit{switching function} on a mixed graph $ X $ is a function $ \theta : V(X) \rightarrow \{ \pm 1, \pm i\} $. \textit{Switching} a mixed graph $ X $ to a mixed graph $ Y $ means that there exists a diagonal matrix $ D(\theta) = \text{diag}(\theta(v_{1}),\ldots,\theta(v_{n})) $ such that $ H(X)= D(\theta)^{-1}H(Y)D(\theta) $, where $V(X)=\{v_{1},\ldots,v_{n}\}$, and we say that $ X $ and $ Y $ are \textit{switching equivalent}. In other words, by switching a mixed graph, in each step, for a certain vertex, say $u$, we multiply the value of all edges in the form $uv$ and $(u,v)$, where $v$ is adjacent to $u$, by one of $1$, $-1$, $i$ or $-i$. It is straightforward to see that if two mixed graphs $ X $ and $ Y $ are switching equivalent, then $ \text{Spec}_{H}(X)= \text{Spec}_{H}(Y) $. A mixed graph $ X $ is \textit{determined by its Hermitian
spectrum}, or DHS, if all its cospectral mates can be obtained
from $ X $ by switching.

The path, the cycle and the complete graph of order $n$ are denoted by $P_{n}$, $C_{n}$ and $K_{n}$, respectively. A $\theta$-\textit{graph}, denoted by $\theta_{p,q,r}$, consists of three internally vertex-disjoint paths $P_{p}$, $P_{q}$ and $P_{r}$ with common endpoints, where $p,q,r\geq2$. The graph $\theta_{p,q,r}$ is \textit{proper} if $p,q,r\geq3$.

As simple graphs, paths and cycles are determined by their spectrum (DS) \cite{haemers-p}. The spectral determination problem of signed paths and signed cycles has been studied in \cite{ss} and \cite{sop}. In this paper, we study the same problem for mixed paths and mixed cycles. We start by showing that there are three types of mixed cycles up to switching equivalence. Next, we show that $P_{3}$ and paths of even order except $P_{8}$ and $P_{14}$ are the only DHS mixed paths. We also characterize all cospectral mates of $P_{8}$, $P_{14}$ and $P_{4k+1}$, where $k\geq1$, and two families of cospectral mates of $P_{4k+3}$, where $k\geq1$, are introduced. Finally, all DHS mixed cycles are characterized. Note that as mixed graphs, the paths of order $ n $, where $ n \equiv 1 \pmod 4 $, are not DHS, but as signed graphs, except $ P_{13} $, $ P_{17} $ and $ P_{29} $, they are determined by their spectrum \cite{sop}. The final section of this paper is the Appendix A, which includes two figures. The mixed graphs in the first figure are $(-2,2)$-out and the mixed graphs in the second figure are possible components of a mixed cospectral mate of a path. Also, the Appendix A includes the spectrum of the graphs given in the second figure.

We conclude this section by stating a theorem which will be a useful tool in proving our results.
\begin{theorem}\label{forest}
	\cite{liuli}
	Let $ F $ be a forest. Then all mixed graphs whose underlying graph is isomorphic to $ F $ are switching equivalent to $ F $.
\end{theorem}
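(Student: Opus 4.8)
The plan is to reduce to the connected case and then build an explicit switching function by processing the tree one vertex at a time, starting from a root and moving outward. Since the underlying graph $F$ is a disjoint union of trees and switching acts componentwise, it suffices to treat a single tree $T$; applying the construction on each component and assembling the resulting diagonal matrices into one block-diagonal matrix handles the general forest.

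So I would fix a mixed graph $X$ whose underlying graph is a tree $T$, choose a root $r$, and order the vertices $r=v_{1},v_{2},\ldots,v_{n}$ so that each $v_{j}$ with $j\geq2$ has its unique neighbor on the path to $r$ — its parent $p(v_{j})$ — appearing earlier in the list. Define the switching function $\theta$ inductively along this order: set $\theta(r)=1$, and having defined $\theta$ on all earlier vertices, set $\theta(v_{j})=\theta(p(v_{j}))\,h_{p(v_{j})v_{j}}$, where $h_{p(v_{j})v_{j}}$ is the value in $X$ of the edge joining $v_{j}$ to its parent.

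The two facts making this well defined and effective are the following. First, a single edge of a mixed graph has value in $\{1,i,-i\}$, each of which lies in the switching group $\{\pm1,\pm i\}$ of fourth roots of unity; since this group is closed under multiplication, every $\theta(v_{j})$ produced by the recursion again lies in $\{\pm1,\pm i\}$, so $\theta$ is a legitimate switching function. Second, recalling that switching by $D(\theta)$ replaces each entry $h_{uw}$ by $\theta(u)\overline{\theta(w)}\,h_{uw}$, a direct check using $\overline{z}=z^{-1}$ for fourth roots of unity shows that the chosen $\theta(v_{j})$ sends the parent edge $p(v_{j})v_{j}$ to value $1$.

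It remains to see that every edge — not merely the parent edges — is sent to $1$, and this is exactly where acyclicity is essential. Because $T$ is a tree, every edge is the parent edge of its endpoint farther from the root, so the recursion fixes all of them to $1$ simultaneously, with no competing requirement on any single value of $\theta$. The resulting mixed graph therefore has all edge values equal to $1$, i.e. it is the plain tree $T$, proving that $X$ is switching equivalent to $T$. The one point deserving care — and the only place a genuine obstacle could arise — is this absence of conflict: in a graph containing a cycle, fixing the edges around that cycle would impose the switching-invariant product of edge values, which a componentwise recursion cannot in general force to $1$. The tree hypothesis is precisely what removes every such constraint.
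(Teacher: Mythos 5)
Your proof is correct and complete: the rooted-tree recursion $\theta(v_j)=\theta(p(v_j))\,h_{p(v_j)v_j}$ is well defined because the values $1,\pm i$ lie in the switching group $\{\pm1,\pm i\}$, it normalizes every parent edge to $1$, and in a tree every edge is a parent edge, so no conflicting constraint arises. The paper itself gives no proof of this statement --- it is quoted from the reference of Liu and Li --- so there is nothing to compare against; your argument is the standard one and fills that gap adequately.
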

\section{Hermitian Spectrum of Mixed Cycles}\label{s2}
In this section, we study the spectral theory of mixed cycles. First, we examine the Hermitian spectrum of mixed cycles using the switching function discussed in Section 1. Here, our approach is completely algorithmic and intuitive.

Let $ X $ be a mixed cycle. We will show that up to switching equivalence, there are exactly three types of mixed cycles. To see this we consider four switching functions:
\begin{enumerate}
	\item[Sw.1.] Let $ \{ u , v , w\} \subseteq V(X) $ and $ (u,v) , (v,w) \in E_{1}(X) $. Define the switching function
	\begin{equation*}
	\theta(v)=-1 \text{~~and~~} \theta(x) = 1,\qquad\text{~~for every~~} x \in V(X)\backslash\{v\}.
	\end{equation*}
	Then, $ X $ changes to a mixed cycle such that $ (v,u) , (w,v) \in E_{1}(X) $ and all other edges remain unchanged. So, this action reverses the orientation of two consecutive directed edges with the same direction. 
	\item[Sw.2.] Let $ \{ u , v , w\} \subseteq V(X) $ and $ (u,v) , (w,v) \in E_{1}(X) $. Define the switching function
	\begin{equation*}
	\theta(v)=i \text{~~and~~} \theta(x) = 1,\qquad\text{~~for every~~} x \in V(X)\backslash\{v\}.
	\end{equation*}
	Then, $ X $ changes to a mixed cycle such that $ uv , vw \in E_{0}(X) $ and all other edges remain unchanged.
		\item[Sw.3.] Let $ \{ u , v , w\} \subseteq V(X) $ and $ (v,u) , (v,w) \in E_{1}(X) $. Define the switching function
	\begin{equation*}
	\theta(v)=-i \text{~~and~~} \theta(x) = 1,\qquad\text{~~for every~~} x \in V(X)\backslash\{v\}.
	\end{equation*}
	Then, $ X $ changes to a mixed cycle such that $ uv , vw \in E_{0}(X) $ and all other edges remain unchanged.
		\item[Sw.4.] Let $ \{ u , v , w\} \subseteq V(X) $ and $ (u,v) \in E_{1}(X)$ and $ vw \in E_{0}(X) $. Define the switching function
	\begin{equation*}
	\theta(v)=i \text{~~and~~} \theta(x) = 1,\qquad\text{~~for every~~} x \in V(X)\backslash\{v\}.
	\end{equation*}
	Then, $ X $ changes to a mixed cycle such that $ uv \in E_{0}(X)$ and $  (v,w) \in E_{1}(X) $ and all other edges remain unchanged.
\end{enumerate}
Now, after examining all possible cases, we are ready to establish the following theorem.
\begin{theorem}
	\label{Types}
	Let $ X $ be a mixed cycle of order $ n $. Then, $ X $ is switching equivalent to exactly one of the three following mixed cycles:
	\begin{enumerate}
		\item[1.] $ C_{n} $: The mixed cycle whose all edges are undirected.
		\item[2.]  $ C_{n}^{1} $: The mixed cycle which has exactly one directed edge.
		\item[3.]  $ C_{n}^{2} $: The mixed cycle which has two consecutive directed edges with the same direction and all its other edges are undirected.
	\end{enumerate}
	We call them mixed cycles of \textit{Type} $0$, $1$ and $2$, respectively.
\end{theorem}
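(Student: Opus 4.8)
\begin{ppt}
The plan is to separate the statement into an existence claim (every mixed cycle switches to one of the three listed cycles) and a uniqueness claim (no two of $C_n$, $C_n^1$, $C_n^2$ are switching equivalent). Existence will be the promised algorithmic reduction built from Sw.1--Sw.4, carried out by induction on the number $d$ of directed edges of $X$, while uniqueness will rest on a single switching invariant, the value of the cycle.

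For existence, the base cases are immediate: if $d=0$ then $X=C_n$, and if $d=1$ then $X$ has exactly one directed edge, so $X=C_n^1$. For the inductive step with $d\ge 2$, I would first use Sw.4 (with $\theta(v)=i$) and its reverse (with $\theta(v)=-i$) to transport a directed edge across an incident undirected edge; iterating these slides moves the directed edges around the cycle until two of them share a vertex $v$. Whenever $d<n$ there is always an undirected edge to slide along, so such a consolidation is possible. Once two directed edges meet at $v$, exactly two cases arise. If they form a sink $(u,v),(w,v)\in E_1(X)$ or a source $(v,u),(v,w)\in E_1(X)$, then Sw.2 or Sw.3 turns both into undirected edges and lowers $d$ by $2$, so the induction hypothesis finishes the case. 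If instead they form a consistent flow $(u,v),(v,w)\in E_1(X)$, then for $d=2$ we have reached $C_n^2$ and are done, whereas for $d\ge 3$ I would slide a third directed edge adjacent to this pair and apply Sw.1 to reverse the pair, creating a source or sink against the third edge, after which Sw.2 or Sw.3 again reduces $d$ by $2$.

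The only configuration left uncovered is $d=n$, where no undirected edge is available to slide along. Here every vertex is a source, a sink, or a through-vertex; if any source or sink is present, Sw.2 or Sw.3 immediately drops $d$ below $n$, and otherwise $X$ is a consistently oriented directed cycle, where one application of Sw.1 creates a sink and Sw.2 then reduces $d$. For uniqueness, I would note that under $H(X)=D(\theta)^{-1}H(Y)D(\theta)$ the diagonal factors attached to each vertex cancel in the product $h_{12}h_{23}\cdots h_{n1}$, so the value of the cycle is exactly preserved by switching. A direct computation then gives value $1$ for $C_n$, value $-1$ for $C_n^2$ (two same-direction edges contribute $i\cdot i=-1$), and value $\pm i$ for $C_n^1$; since $\{1\}$, $\{-1\}$ and $\{i,-i\}$ are disjoint, the three types are pairwise non-switching-equivalent, which yields the word ``exactly''.

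The main obstacle I anticipate is not any one computation but the bookkeeping of the reduction: one must verify that the consolidation and elimination steps always terminate in a canonical form rather than looping, and in particular dispose of the $d=n$ case and of long runs of same-direction edges. The value invariant is what keeps the process honest --- because it is preserved at every step, an even number of directed edges can only collapse to $C_n$ or to $C_n^2$ according as the value is $1$ or $-1$, and an odd number must collapse to $C_n^1$ --- so the algorithm cannot terminate in the wrong type.
\end{ppt}
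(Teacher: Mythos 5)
Your proposal is correct, and its existence half follows essentially the same route as the paper: the paper likewise uses Sw.1--Sw.4 to consolidate all directed edges into a single consecutive, consistently oriented run and then shrinks that run by applying Sw.1 followed by Sw.2 and Sw.3; your version merely repackages this as an induction on the number $d$ of directed edges and spells out the $d=n$ case and the source/sink/through-vertex trichotomy that the paper leaves implicit. Where you genuinely go beyond the paper is the uniqueness half. The paper's proof only shows that $X$ reaches one of $C_{n}$, $C_{n}^{1}$, $C_{n}^{2}$ and never argues that these three are pairwise non-switching-equivalent, so the word ``exactly'' in the statement is, strictly speaking, not justified there; one could extract it after the fact from the three spectra computed later in Section 2, since switching preserves the spectrum, but the proof itself does not do this. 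Your observation that the value $h_{12}h_{23}\cdots h_{n1}$ of the cycle is invariant under conjugation by $D(\theta)$ --- the factors $\theta(v_{j})$ and $\theta(v_{j})^{-1}$ cancel around the cycle --- gives a self-contained, spectrum-free proof of disjointness, since the three types have the pairwise distinct values $1$, $-1$ and $\pm i$. The same invariant also certifies a priori that the reduction algorithm cannot terminate in the wrong canonical form, a sanity check the paper does not offer. In short: same algorithm for existence, but your value-invariant argument supplies the missing ``exactly one'' and is worth keeping.
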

\begin{proof}
	Let $X$ be a mixed cycle. By the four switching functions introduced above, there exists a mixed cycle switching equivalent to $X$, in which all directed edges are consecutive and have the same direction. Let $(u_{1},u_{2}),(u_{2},u_{3}),\ldots,(u_{l},u_{l+1})$ be all directed edges of such a mixed cycle. If $l\leq2$, then the proof is complete. So, let $l\geq3$. Using Sw.1, we can reverse the direction of $(u_{l-2},u_{l-1})$ and $(u_{l-1},u_{l})$. Then, using Sw.2 and Sw.3, we reduce the number of directed edges. By repeating this procedure, one can see that $X$ is switching equivalent to one of the mixed cycles mentioned above.
\end{proof}
\begin{example}
	In Figure \ref{switch}, a mixed cycle is switched to a mixed cycle of Type 1.
\end{example}
\begin{figure}[!h]
	\centering
	\includegraphics[height=2.25cm]{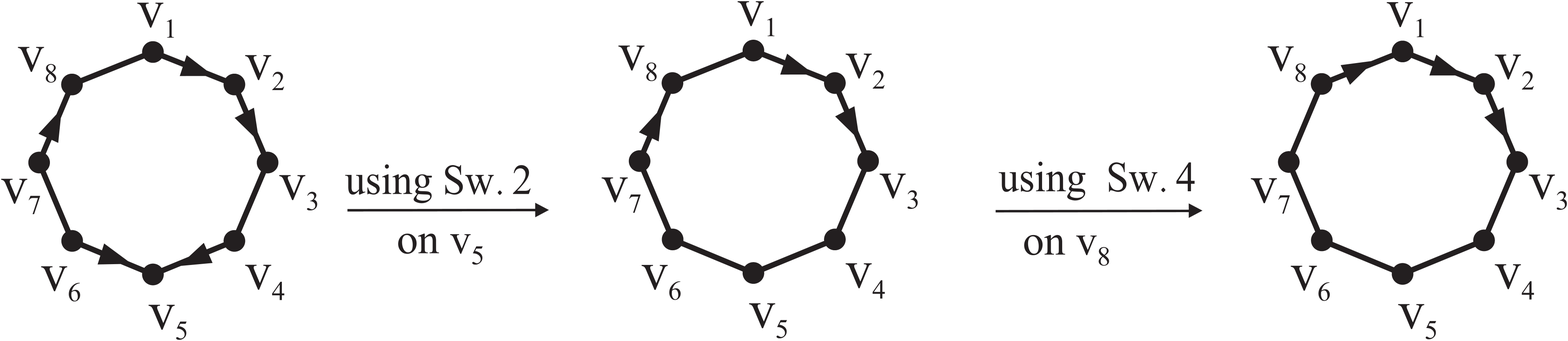}
	\includegraphics[height=2.25cm]{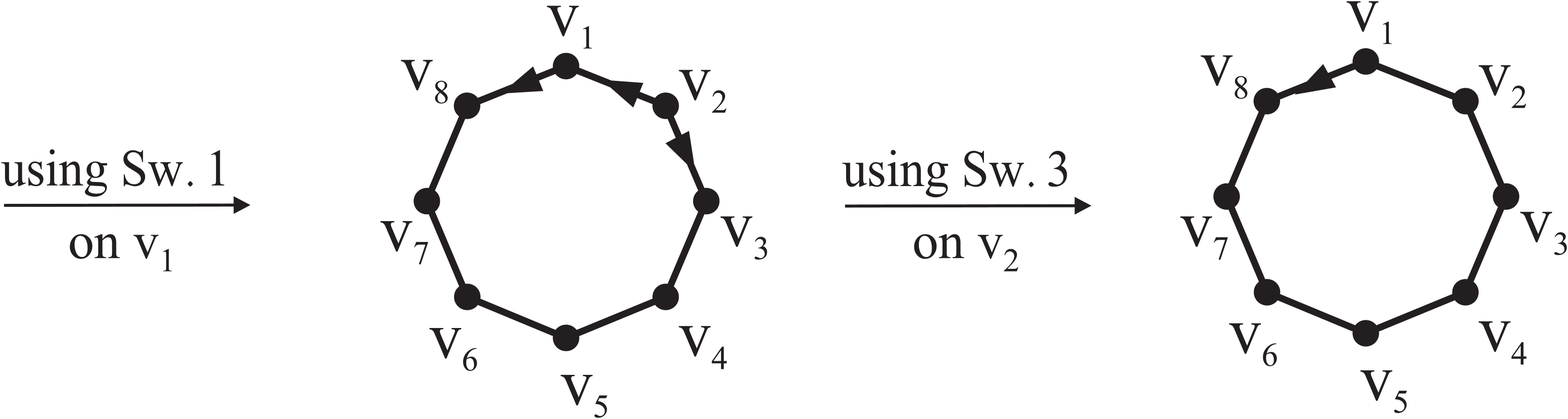}
	\caption{Switching a mixed cycle by the four switching functions introduced above.}
		\label{switch}
\end{figure}
\newpage
Now, we determine the spectrum of $ C_{n}$, $ C^{1}_{n} $ and $ C^{2}_{n} $. The spectrum of $ C_{n} $ is well-known \cite[p.3]{cve}:
\begin{equation*}
\text{Spec}_{H}(C_{n})= \{2\cos \frac{2k}{n}\pi , ~ k=0,1, \ldots , n-1 \}.
\end{equation*}
To determine the spectrum of $ C^{2}_{n} $, without loss of generality, assume that the directed edges of $ C^{2}_{n} $ are $ (v_{1},v_{2}) $ and $ (v_{2},v_{3}) $. Define the switching function $\theta$,
\begin{equation*}
\theta(v_{2})=i \text{~~and~~} \theta(x) = 1,\qquad\text{~~for every~~} x \in V(C_{n}^{2})\backslash\{v_{2}\}.
\end{equation*} 
Hence, if $ H $ is the Hermitian adjacency matrix of $C^{2}_{n}$, then it is similar to the signed adjacency matrix of $ (C_{n},-) $. So, $ \text{Spec}_{H}(C^{2}_{n})=\text{Spec}(C_{n},-) $. Now, by \cite{bel}, we have
\begin{equation*}
\text{Spec}_{H}(C^{2}_{n})= \{2\cos \frac{2k+1}{n}\pi , ~ k=0,1, \ldots , n-1 \}.
\end{equation*}
Finally, we turn to determine the spectrum of $ C^{1}_{n} $.
\begin{theorem}
	\label{spectype1}
	For every integer $n\geq3$, the following holds:
	\begin{equation}\label{et2}
\phi((C_{2n},-),\lambda)= (\phi(C^{1}_{n}, \lambda))^{2}.
	\end{equation}
\end{theorem}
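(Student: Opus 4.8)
The plan is to prove the identity by realizing $\phi((C_{2n},-),\lambda)$ as the characteristic polynomial of the real symmetric matrix obtained by ``realifying'' $H(C_n^1)$, and then invoking the elementary fact that passing from a Hermitian matrix to its realification squares the characteristic polynomial. First I would fix a convenient switching representative of $C_n^1$: by Theorem \ref{forest} together with the classification of Theorem \ref{Types}, I place the unique directed edge as the edge closing an undirected Hamiltonian path. Ordering the vertices $u_1,\dots,u_n$ along that path, I can then write $H(C_n^1)=S+iK$, where $S=A(P_n)$ is the (real, symmetric) adjacency matrix of the path $u_1-u_2-\cdots-u_n$ and $K$ is the real skew-symmetric matrix whose only nonzero entries are $K_{1n}=1=-K_{n1}$, coming from the directed edge between the two ends $u_1,u_n$.

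Next I form the $2n\times 2n$ real symmetric matrix $\widehat H=\left(\begin{smallmatrix} S & -K \\ K & S\end{smallmatrix}\right)$. Block-diagonalizing by the unitary $\tfrac{1}{\sqrt2}\left(\begin{smallmatrix} I & iI \\ iI & I\end{smallmatrix}\right)$ shows $\widehat H$ is similar to $\mathrm{diag}(S+iK,\,S-iK)$, whence $\phi(\widehat H,\lambda)=\phi(H,\lambda)\,\phi(\overline H,\lambda)$. Since $H$ is Hermitian, $\phi(C_n^1,\lambda)=\phi(H,\lambda)$ has real coefficients and $\phi(\overline H,\lambda)=\phi(H,\lambda)$, so $\phi(\widehat H,\lambda)=\phi(C_n^1,\lambda)^2$. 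It therefore suffices to recognize $\widehat H$ as the signed adjacency matrix of $(C_{2n},-)$.

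This last identification is the heart of the argument. The entries of $\widehat H$ lie in $\{0,\pm1\}$ and its diagonal vanishes, so it is a signed adjacency matrix; tracing its edges shows the underlying graph is a single $2n$-cycle on $\{u_1,\dots,u_n,u_1',\dots,u_n'\}$: the two path copies $u_1-\cdots-u_n$ and $u_1'-\cdots-u_n'$ are spliced together by exactly the two cross edges $\{u_n,u_1'\}$ and $\{u_n',u_1\}$ produced by the $\pm K$ blocks, with opposite signs. The product of all edge signs around this cycle is $-1$, so it is a \emph{negative} $2n$-cycle, hence switching-equivalent to $(C_{2n},-)$ and in particular cospectral with it; combining with the previous step gives $\phi((C_{2n},-),\lambda)=\phi(C_n^1,\lambda)^2$. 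The main obstacle is precisely this bookkeeping: one must verify that the two $\pm K$ blocks contribute exactly two cross edges (of opposite sign) and that these weld the two path-copies into a \emph{single} $2n$-cycle with no repeated edge, so that the resulting signed graph really is a negative cycle and not some other signed unicyclic graph.

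Alternatively — and more in keeping with the coefficient machinery already in place — one can compute both sides from the elementary-subgraph formulas. By Theorem \ref{tcp} the unique cycle of $C_n^1$ has value $i\notin\mathbb R$, so it is not a real elementary subgraph and only matchings survive; hence $\phi(C_n^1,\lambda)=\mu(C_n,\lambda)$, the matching polynomial of $C_n$. By \eqref{eza} the elementary subgraphs of $(C_{2n},-)$ are the matchings together with the single negative Hamiltonian cycle, which contributes $(-1)^{1}2^{1}(-1)=2$ to the constant term, so $\phi((C_{2n},-),\lambda)=\mu(C_{2n},\lambda)+2$. The theorem then reduces to the polynomial identity $\mu(C_{2n},\lambda)+2=\mu(C_n,\lambda)^2$, which follows from the closed form $\mu(C_m,\lambda)=2T_m(\lambda/2)$ (Chebyshev polynomial of the first kind) and the composition $T_{2n}(x)=2T_n(x)^2-1$. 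In this second route the crux shifts to establishing that last Chebyshev identity, i.e. equivalently that the eigenvalues of $C_n^1$ are $2\cos\frac{(2k+1)\pi}{2n}$.
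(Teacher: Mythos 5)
Both of your routes are correct, and the comparison with the paper is instructive. The paper proceeds exactly as in your second route up to the reduction: it uses Theorem \ref{tcp} and Equation \eqref{eza} to write $\phi(C^{1}_{n},\lambda)$ as the matching polynomial of $C_{n}$ (the unique cycle having non-real value, only matchings survive) and $\phi((C_{2n},-),\lambda)$ as the matching polynomial of $C_{2n}$ plus $2$, so the theorem becomes the identity $\mu(C_{2n},\lambda)+2=\mu(C_{n},\lambda)^{2}$, equivalently that for $k<n$ the number of $k$-matchings of $C_{2n}$ equals that of $C_{n}\cup C_{n}$, plus a small parity check at $k=n$. Where you invoke $\mu(C_{m},\lambda)=2T_{m}(\lambda/2)$ and the classical composition identity $T_{2n}(x)=2T_{n}(x)^{2}-1$, the paper instead constructs an explicit bijection between $k$-matchings of $C_{2n}$ and $k$-matchings of two disjoint copies of $C_{n}$, by cutting and resplicing the $2n$-cycle at the smallest index $i$ for which neither $i(i+1)$ nor $i'(i+1)'$ is used; your shortcut is cleaner but outsources the combinatorics to a known Chebyshev identity, while the bijection is self-contained. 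Your first route is genuinely different from anything in the paper and arguably more conceptual: realifying $H(C^{1}_{n})=S+iK$ to the symmetric matrix $\left(\begin{smallmatrix}S&-K\\K&S\end{smallmatrix}\right)$, conjugating by $\tfrac{1}{\sqrt2}\left(\begin{smallmatrix}I&iI\\iI&I\end{smallmatrix}\right)$ to get $\mathrm{diag}(\overline H,H)$ and hence the square of the (real) characteristic polynomial, and then recognizing the realification as the signed adjacency matrix of a negative $2n$-cycle explains \emph{why} a square appears and why the relevant signed graph is a $2n$-cycle, with no coefficient computation at all. The two points you flag as the crux both check out: the block diagonalization is a routine computation, and the $\pm K$ blocks contribute exactly the two cross edges $u_{n}u_{1}'$ (sign $+1$) and $u_{n}'u_{1}$ (sign $-1$), which weld the two path copies into a single $2n$-cycle of total sign $-1$, switching equivalent to $(C_{2n},-)$.
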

\begin{proof}
	By Equations \eqref{eccpm} and \eqref{eza} and the fact that the number of $ k $-matchings of $ C_{n} $ is $\displaystyle \dfrac{n}{n-k} \binom{n-k}{k}$ \cite[p.14]{god}, we have
\begin{align*}
\phi(C^{1}_{n}, \lambda) &= \sum_{j=0}^{\lfloor \frac{n}{2} \rfloor} (-1)^{j}\frac{n}{n-j} \binom{n-j}{j}\lambda^{n-2j}, \\
\phi((C_{2n},-),\lambda) &= \sum_{j=0}^{n} (-1)^{j}\frac{2n}{2n-j} \binom{2n-j}{j}\lambda^{2n-2j}+2.
\end{align*}
To prove Equation \eqref{et2}, we show that for $k=0,1,\ldots,n$, the coefficient of $ \lambda^{2n-2k} $ in both sides of \eqref{et2} are the same. This coefficient in $(\phi(C^{1}_{n}, \lambda))^{2}$ is as follows:
\begin{equation*}
\sum_{j=0}^{k} (-1)^{j}\frac{n}{n-j} \binom{n-j}{j}(-1)^{k-j}\frac{n}{n-(k-j)} \binom{n-(k-j)}{k-j}=
\end{equation*}
\begin{equation*}
(-1)^{k}\sum_{j=0}^{k}\frac{n}{n-j} \binom{n-j}{j}\frac{n}{n-(k-j)} \binom{n-(k-j)}{k-j}.
\end{equation*}
On the other hand, the coefficient of $ \lambda^{2n-2k} $ in $\phi((C_{2n},-),\lambda)$ is
\begin{equation*}
(-1)^{k}\frac{2n}{2n-k}\binom{2n-k}{k}.
\end{equation*}
The number of $ k $-matchings in the disjoint union of two copies of $ C_{n} $ and the number of $ k $-matchings of $ C_{2n} $ is $\displaystyle\sum_{j=0}^{k}\frac{n}{n-j} \binom{n-j}{j}\frac{n}{n-(k-j)} \binom{n-(k-j)}{k-j}$ and $\displaystyle\frac{2n}{2n-k}\binom{2n-k}{k}$, respectively. Therefore, it is enough to prove that these two numbers are the same. Indeed, we would like to define a bijection between $k$-matchings of $C_{2n}$ and $k$-matchings of the disjoint union of two copies of $C_{n}$. First, let $ k < n $, and label the vertices of $C_{2n}$ by 
$ \{1,2,\ldots,n, 1^{\prime},2^{\prime}, \ldots ,n^{\prime}\}$ in clockwise order and let $M$ be a matching of size $k$ in $C_{2n}$. Since $M$ is not a perfect matching, there exists $i$
such that none of the edges $i(i+1)$ and $i^{\prime}((i+1)^{\prime})$ is in $M$ (Note that if $i=n$, then $i+1=1^{\prime}$ and $(i+1)^{\prime}=1$). Choose the smallest $i$ with this property and replace the edges $i(i+1)$ and $i^{\prime}((i+1)^{\prime})$ with the edges $i((i+1)^{\prime})$
and $i^{\prime}(i+1)$ in $C_{2n}$ to make two disjoint copies of $C_{n}$. Now, drop prime from the vertices $ (i+1)^{\prime}, \ldots , n^{\prime} $ and add prime to the vertices $ i+1, \ldots , n $. This creates a $k$-matching of two disjoint copies of $C_{n}$. We show that this procedure is reversible. Let $ M $ be a $ k $-matching in the disjoint union of two copies of $ C_{n} $. Label the vertices of one of the two $ C_{n} $ by $ 1,\ldots , n $ and the other by $ 1^{\prime}, \ldots , n^{\prime} $. Since $ k<n $, the restriction of $ M $ to at least one of the $ C_{n} $ is not a perfect matching. Consider the smallest $ i $ such that none of the edges $i(i+1)$ and $i^{\prime}(i+1)^{\prime}$ is in $M$ and replace the edges $i(i+1)$ and $i^{\prime}((i+1)^{\prime})$ with the edges $i((i+1)^{\prime})$
and $i^{\prime}(i+1)$ to make $C_{2n}$. Relabel the vertices $ (i+1)^{\prime}, \ldots , n^{\prime} $ with $ i+1, \ldots , n $  and the vertices $ i+1, \ldots , n $ with $ (i+1)^{\prime}, \ldots , n^{\prime} $. This leads to a $ k $-matching of $ C_{2n} $ which is obviously the reverse of the previous procedure.

Now, if $ k=n $, then two cases should be considered. If $ n $ is odd, then one can easily see that the constant term of both $ \phi((C_{2n},-),\lambda) $ and $\phi(C^{1}_{n}, \lambda)$ is zero. If $ n $ is even, then the constant term of $ \phi((C_{2n},-),\lambda) $ is 4 and the constant term of $\phi(C^{1}_{n}, \lambda)$ is 2. This completes the proof.
\end{proof}
It is easy to see that the multiplicity of each eigenvalue of $ (C_{2n},-) $ is 2. So, we have the following corollary:
\begin{corollary}
	For every integer $n\geq3$, the following holds:
	\begin{equation*}
	\text{Spec}_{H}(C^{1}_{n})= \{2\cos \frac{2k+1}{2n}\pi , ~ k=0,1, \ldots , n-1 \}.
	\end{equation*}
\end{corollary}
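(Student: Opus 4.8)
The plan is to read the corollary off directly from Theorem~\ref{spectype1} together with the spectrum of the signed cycle recorded just before it and the multiplicity observation stated immediately above the corollary. First I would recall that, just above Theorem~\ref{spectype1}, it was shown that $\text{Spec}(C_m,-)=\{2\cos\frac{2k+1}{m}\pi : k=0,\ldots,m-1\}$. Replacing $m$ by $2n$ gives
\[
\text{Spec}(C_{2n},-)=\left\{2\cos\tfrac{2k+1}{2n}\pi : k=0,1,\ldots,2n-1\right\}.
\]
By Theorem~\ref{spectype1} we have $\phi((C_{2n},-),\lambda)=(\phi(C_n^1,\lambda))^2$, so the multiset of roots of $\phi((C_{2n},-),\lambda)$ is exactly the multiset of roots of $\phi(C_n^1,\lambda)$ with every multiplicity doubled. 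Since $C_n^1$ has order $n$, the polynomial $\phi(C_n^1,\lambda)$ has degree $n$, matching the fact that the $2n$ roots of $(C_{2n},-)$ are obtained by doubling the $n$ roots of $C_n^1$.

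Next I would invoke the remark preceding the corollary, namely that each eigenvalue of $(C_{2n},-)$ has multiplicity $2$. Combined with the squaring relation, this forces every eigenvalue of $C_n^1$ to be simple and identifies the set of distinct eigenvalues of $C_n^1$ with the set of distinct eigenvalues of $(C_{2n},-)$. Hence it only remains to extract the $n$ distinct numbers among the $2n$ listed values $2\cos\frac{2k+1}{2n}\pi$.

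For this I would use the symmetry $\cos\theta=\cos(2\pi-\theta)$: under the index substitution $k\mapsto 2n-1-k$ the argument $\frac{2k+1}{2n}\pi$ is sent to $2\pi-\frac{2k+1}{2n}\pi$, so the $2n$ values pair off into $n$ coincidences, each distinct value occurring exactly twice, in agreement with the multiplicity-$2$ statement. Restricting to $k=0,1,\ldots,n-1$ keeps $\frac{2k+1}{2n}\pi$ inside $(0,\pi)$, on which $\cos$ is strictly decreasing; thus these $n$ values are pairwise distinct and constitute one representative of each pair. Therefore the set of distinct eigenvalues of $(C_{2n},-)$, and hence of $C_n^1$, is precisely $\{2\cos\frac{2k+1}{2n}\pi : k=0,\ldots,n-1\}$, and since each is simple these $n$ numbers account for all $n$ eigenvalues of $C_n^1$, which is the claimed spectrum.

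The routine parts (the degree count and the squaring relation) are immediate from what precedes. The only step requiring care is the pairing argument: verifying that the index range $k=0,\ldots,n-1$ picks out exactly one representative from each of the $n$ coincident pairs and that these representatives are mutually distinct. This is the crux of the argument, but it follows cleanly from the reflection $k\mapsto 2n-1-k$ and the monotonicity of cosine on $(0,\pi)$.
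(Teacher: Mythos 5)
Your proposal is correct and follows essentially the same route as the paper: the paper deduces the corollary from Theorem~\ref{spectype1}, the known spectrum of $(C_{2n},-)$, and the one-line observation that each eigenvalue of $(C_{2n},-)$ has multiplicity $2$, which you simply spell out in more detail via the pairing $k\mapsto 2n-1-k$ and the injectivity of cosine on $(0,\pi)$.
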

\subsection{Mixed Unicyclic Graphs}
Now, similar to Theorem \ref{Types}, a result regarding mixed unicyclic graphs is obtained. Using Theorems \ref{forest} and \ref{Types}, we have the following theorem.
\begin{theorem}
	\label{prop1}
	Let $ X $ be a mixed unicyclic graph whose cycle is of order $ n $. Then $ X $ is switching equivalent to a mixed unicyclic graph $ Y $, where $G(Y)=G(X)$ and $ Y $ is one of the following mixed graphs:
	\begin{itemize}
		\item[(i)] All edges of $ Y $ are undirected.
		\item[(ii)] The cycle of $ Y $ is $ C^{1}_{n} $ and all other edges are undirected.
		\item[(iii)] The cycle of $ Y $ is $ C^{2}_{n} $ and all other edges are undirected.
	\end{itemize} 
	We call them mixed unicyclic graphs of Type $0$, $1$ and $2$, respectively.
\end{theorem}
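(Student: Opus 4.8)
The plan is to reduce $X$ in two stages: first bring the unique cycle of $X$ into one of the three canonical forms supplied by Theorem \ref{Types}, and then clean up the pendant trees using Theorem \ref{forest}, taking care that the second stage does not disturb the cycle fixed in the first. Throughout I use that switching never alters the underlying graph, so that $G(Y)=G(X)$ is automatic, and that the value $h(C)$ of the cycle $C$ of $X$ (for a fixed traversal) is a switching invariant, since the diagonal conjugating factors telescope around $C$. Up to conjugation this value lies in exactly one of the classes $\{1\}$, $\{\pm i\}$, $\{-1\}$, and it is this invariant that ultimately forces the trichotomy.

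First I would isolate the cycle $C$ of $X$. Since $X$ is unicyclic there are no chords, so the induced subgraph on $V(C)$ is itself a mixed cycle of order $n$. By Theorem \ref{Types} there is a composition of the switching functions Sw.1--Sw.4 carrying $C$ to $C_{n}$, $C_{n}^{1}$, or $C_{n}^{2}$ according to $h(C)$. Each of these switching functions is supported on cycle vertices, so applying the same composition to all of $X$ transforms $X$ into a switching-equivalent graph $X'$ whose cycle is already in canonical form. The only side effect is that the tree edges incident to cycle vertices may have acquired non-real values; this is harmless and is repaired in the next stage.

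Next I would fix the cycle and correct the pendant trees. Write $X'$ as its canonical cycle $C'$ together with the trees hanging off it, where each such tree is attached to $C'$ at a single cycle vertex $r$ (its root); no tree joins two distinct cycle vertices, as that would create a second cycle. Applying Theorem \ref{forest} to each tree yields a switching function making every edge of that tree undirected. Because a tree is connected, this function is determined only up to its value at one vertex, so I may normalize it to fix $\theta(r)=1$. Then no cycle vertex is switched, the edges of $C'$ are untouched, and all tree edges become undirected. Assembling these per-tree functions into a single $\theta$ equal to $1$ on every cycle vertex produces the desired $Y$, whose cycle is $C_{n}$, $C_{n}^{1}$, or $C_{n}^{2}$ and all of whose remaining edges are undirected.

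The step I expect to be most delicate is precisely the interface between the two stages: guaranteeing that the tree repair of the previous paragraph leaves the canonical cycle intact. This hinges on the freedom to fix the root value when switching a tree, so that the conjugating matrix restricts to the identity on $V(C')$. A secondary point worth recording is why Type $2$ genuinely occurs: since $h(C)$ is a switching invariant and a single mixed edge can carry only a value in $\{1,\pm i\}$, a cycle value of $-1$ cannot be concentrated on one edge while all the others stay real, and must instead be realized by the two consecutive codirected edges of $C_{n}^{2}$, which accounts for the third canonical form.
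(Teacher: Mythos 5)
Your proof is correct and takes exactly the route the paper intends: the paper offers no argument for this statement beyond the sentence ``Using Theorems \ref{forest} and \ref{Types}, we have the following theorem,'' and your two-stage reduction---canonicalize the cycle by the switchings of Theorem \ref{Types}, then normalize each pendant tree via Theorem \ref{forest} with the root value pinned at $1$ so that the cycle edges are untouched---is precisely that combination, spelled out with the necessary care at the interface. The only detail worth recording is that after stage one a tree edge incident to a switched cycle vertex may acquire the value $-1$ (not merely a non-real value), so the intermediate object is not literally a mixed graph; your root-anchored propagation in stage two absorbs this without modification, since it only needs the edge values to lie in $\{\pm 1,\pm i\}$.
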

\begin{remark}
	\label{remark8}
	To determine the spectrum of $ C_{n}^{2} $, we use a switching function which switches $ C_{n}^{2} $ to $ (C_{n},-) $. Now, if $ X $ is a mixed unicyclic graph of Type $2$ and $ G(X)=Y $, then this switching function switches $ X $ to the signed graph $ (Y,-) $.
\end{remark}

\begin{remark}	
In the next section, two mixed unicyclic graphs $ G_{t} $ and $ G_{t}^{t+m} $ appear, where $ G_{t} $ is the union of $C_{4}^{2}$ and $P_{t}$, such that one endpoint of $P_{t}$ is joined to a vertex of $C_{4}^{2}$, and $ G_{t}^{t+m} $ is the union of $C_{4}^{2}$, $P_{t}$ and $P_{t+m}$, such that one endpoint of $P_{t}$ is joined to one vertex of $C_{4}^{2}$, and one endpoint of $P_{t+m}$ is joined to the opposite vertex of $C_{4}^{2}$. Now, by Remark \ref{remark8}, we switch $ G_{t} $ and $ G_{t}^{t+m} $ to the signed graphs $ H_{t} $ and $ H_{t}^{t+m} $, respectively (See Figure \ref{gh}). Their spectrums were determined in \cite{sop}.
$$\textrm{Spec}_{H}(G_{t}) = \textrm{Spec}(H_{t}) =\{2cos\dfrac{2k+1}{2t+4}\pi,k=0,\ldots,t+1\}\cup \{2\cos\dfrac{\pi}{4},2\cos\dfrac{3\pi}{4}\},$$
$$\textrm{Spec}_{H}(G^{t+m}_{t})=\textrm{Spec}(H_{t}^{t+m})=\{2cos\dfrac{2k+1}{2t+2m+4}\pi,k=0,\ldots,t+m+1\}\cup$$ $$\{2cos\dfrac{2k+1}{2t+4}\pi,k=0,\ldots,t+1\}.$$
\end{remark}
\begin{figure}[!h]
	\centering
	\includegraphics[height=3.5cm]{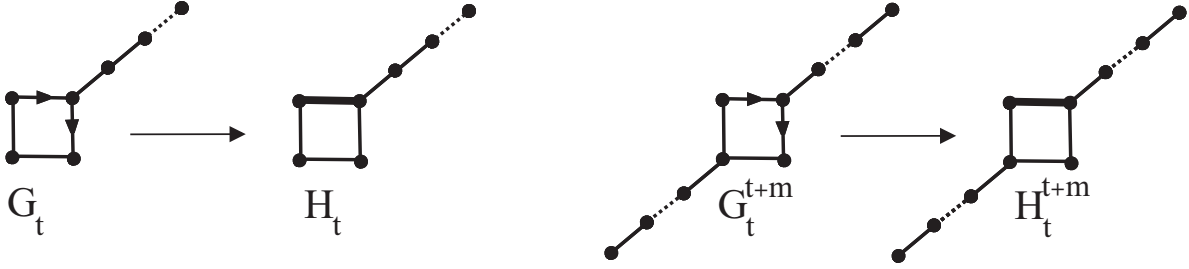}
	\caption{Two mixed graphs $G_t$ and $G_{t}^{t+m}$ can be switched to the signed graphs $H_t$ and $H_{t}^{t+m}$, respectively. The thick edges are negative.}
	\label{gh}
\end{figure}
\section{Mixed Cospectral Mates of Paths}
In this section we deal with the problem of determining the mixed cospectral mates of paths. First, we state two following theorems which are important tools in proving our results.\\
Analogous to simple graphs \cite{sch}, one can obtain a Schwenk-like formula for the mixed graphs. The following theorem expresses this formula.
\begin{theorem}\label{tsl}
	Let $ X $ be a mixed graph, where $ u \in V(X) $. Then
	\begin{itemize}
		\item[(i)] $\phi(X, \lambda)= \lambda \phi(X\setminus u, \lambda)- \sum \limits_{u \sim v} \phi (X\setminus \{u,v\},\lambda)-2 \sum \limits_{Z \in C(u)}\phi(X \setminus V(Z),\lambda) h(Z)$,
		
		where $ C(u) $ is the set of all real cycles passing through $ u $ and $h(Z)$ is the value of $Z$.
		\item[(ii)]
		$\phi(X, \lambda)= \phi(X\setminus e)- \phi (X\setminus \{u,v\},\lambda)-2 \sum \limits_{Z \in C(e)}\phi(X \setminus V(Z),\lambda) h(Z)$,
		
		where $ C(e) $ is the set of all real cycles containing the edge $ e=uv $.
	\end{itemize}
\end{theorem}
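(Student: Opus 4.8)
The plan is to prove both identities directly from the Sachs-type coefficient formula of Theorem~\ref{tcp}, by matching the coefficient of each power $\lambda^{n-k}$ on the two sides. Writing $\phi(X,\lambda)=\sum_{k}c_{k}\lambda^{n-k}$ with $(-1)^{k}c_{k}=\sum_{H}(-1)^{r(H)+l(H)}2^{s(H)}$, it suffices to fix $k$ and sort the real elementary subgraphs $H$ of $X$ on $k$ vertices according to the local role of the distinguished vertex $u$ (for~(i)) or the distinguished edge $e=uv$ (for~(ii)). Since in an elementary subgraph every vertex lies in at most one nontrivial component (a $K_{2}$ or a cycle), this yields a clean partition, and the argument mirrors the classical Schwenk/Sachs derivation for ordinary graphs.

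For~(i), I would split the real elementary subgraphs of $X$ into three classes: those in which $u$ is uncovered (isolated), those in which $u$ lies in a $K_{2}$-component $uv$, and those in which $u$ lies on a (necessarily real) cycle $Z$. Deleting the component containing $u$ sends these bijectively onto the elementary subgraphs of $X\setminus u$, of $X\setminus\{u,v\}$ (ranging over neighbours $v$), and of $X\setminus V(Z)$ (ranging over real cycles $Z$ through $u$), respectively. The crux is then to verify that deleting each type of component reproduces exactly the prefactor in the formula. An uncovered $u$ merely shifts the power of $\lambda$, giving $\lambda\,\phi(X\setminus u,\lambda)$. A $K_{2}$-component raises $r(H)$ by $1$ and leaves $s(H)$ and $l(H)$ unchanged, so after accounting for the $(-1)^{|V(H)|}$ factor relative to the reduced graph it contributes the sign $-1$, yielding $-\sum_{u\sim v}\phi(X\setminus\{u,v\},\lambda)$. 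A cycle $Z$ of length $\ell$ raises $r(H)$ by $\ell-1$ and $s(H)$ by $1$, contributing a factor $2$ from $2^{s(H)}$; combining the parity of $\ell$ with the identity $(-1)^{l(Z)}=h(Z)$ for a real cycle (where $l(Z)\in\{0,1\}$ records whether $Z$ is negative, and $h(Z)=+1$ for positive, $-1$ for negative cycles) collapses the accumulated sign to the overall weight $-2h(Z)$, giving the cycle term $-2\sum_{Z\in C(u)}\phi(X\setminus V(Z),\lambda)\,h(Z)$.

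For~(ii) I would run the identical bookkeeping, but now split the real elementary subgraphs of $X$ according to whether they use the edge $e=uv$. Those avoiding $e$ are exactly the elementary subgraphs of $X\setminus e$ and reproduce $\phi(X\setminus e,\lambda)$; those using $e$ as a $K_{2}$-component reproduce $-\phi(X\setminus\{u,v\},\lambda)$ by the same rank computation; and those using $e$ inside a real cycle $Z\in C(e)$ reproduce $-2\sum_{Z}h(Z)\,\phi(X\setminus V(Z),\lambda)$ by the same weight calculation as above. Here one should note that a non-real cycle through $u$ (or through $e$) never occurs in a real elementary subgraph, so such cycles are automatically excluded from the sum; this is precisely why the summations range only over $C(u)$ and $C(e)$, the sets of real cycles.

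The main obstacle is the sign and weight bookkeeping: one must carefully track how the exponents $r(H)$, $l(H)$, $s(H)$ and $|V(H)|$ change as a single component is removed, and confirm that the accumulated sign simplifies to exactly $-2h(Z)$ for the cycle term. In particular, the step identifying $(-1)^{l(Z)}$ with the value $h(Z)$ of a real cycle, together with the parity cancellation of $(-1)^{|V(H)|}$ against that of the reduced graph on $n-\ell$ vertices, is where the computation must be done with care; once these weights are pinned down, the remainder is a routine reindexing of the partitioned sum.
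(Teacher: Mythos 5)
Your proposal is correct and follows exactly the route the paper implicitly invokes: the paper gives no written proof, merely asserting the formula is ``analogous to simple graphs'' via Schwenk's argument, and the standard derivation of Schwenk's identities is precisely your partition of the real elementary subgraphs by the role of $u$ (resp.\ $e$) in the Sachs-type coefficient formula of Theorem~\ref{tcp}. Your sign bookkeeping checks out: deleting a $K_{2}$-component drops $r$ by $1$, deleting a cycle of length $\ell$ drops $r$ by $\ell-1$ and $s$ by $1$, and $(-1)^{l(Z)}=h(Z)$ for a real cycle, which together yield the factors $-1$ and $-2h(Z)$ as claimed.
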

The following theorem states the interlacing theorem for mixed graphs.
\begin{theorem}\cite{guom}\label{interlace}
	The eigenvalues of an induced subgraph of a mixed graph interlace the eigenvalues of the mixed graph.
\end{theorem}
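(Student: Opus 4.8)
The plan is to reduce the statement to the classical Cauchy interlacing theorem for Hermitian matrices. The first step is to observe that for a vertex subset $S\subseteq V(X)$, the Hermitian adjacency matrix of the induced mixed graph $\langle S\rangle$ is precisely the principal submatrix of $H(X)$ whose rows and columns are indexed by $S$. This is immediate from the defining formula for $H$: the entry $H(X)_{ij}$ is determined solely by the adjacency relation between $v_i$ and $v_j$ (undirected edge, directed edge in either orientation, or no edge), and each such relation is inherited unchanged by the induced subgraph. Hence deleting vertices corresponds exactly to deleting the matching rows and columns of $H(X)$, with no alteration of the surviving entries.

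Next I would use the fact, already recorded in the excerpt, that $H(X)$ is Hermitian, so its eigenvalues are real and satisfy the Courant--Fischer min-max characterization $\lambda_k=\max_{\dim V=k}\,\min_{0\neq x\in V}\frac{x^{\ast}H(X)x}{x^{\ast}x}$. Writing the eigenvalues of $H(X)$ as $\lambda_1\geq\cdots\geq\lambda_n$ and those of the principal submatrix $B=H(\langle S\rangle)$ of size $m=|S|$ as $\mu_1\geq\cdots\geq\mu_m$, the interlacing inequalities $\lambda_k\geq\mu_k\geq\lambda_{k+n-m}$ follow by comparing the Rayleigh quotient over subspaces of $\mathbb{C}^{n}$ with its restriction to the coordinate subspace spanned by $\{e_j : v_j\in S\}$, which is canonically identified with $\mathbb{C}^{m}$. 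The cleanest execution is to treat the single-deletion case $n-m=1$ first by min-max and then iterate $n-m$ times, with each deletion shifting the index bound by one.

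Since the only structural ingredient is the identification of the induced subgraph's matrix with a principal submatrix of $H(X)$, I do not expect any genuine obstacle: the mathematical content is entirely the standard Hermitian interlacing theorem, and the sole thing that must be verified on the graph-theoretic side is that vertex deletion produces a principal submatrix, which the definition of $H$ guarantees. The mild point to keep in mind is simply that the argument requires the Hermitian (rather than merely real symmetric) form of Courant--Fischer, which applies verbatim because $H(X)$ is Hermitian.
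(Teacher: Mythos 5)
The paper gives no proof of this statement, simply citing Guo and Mohar, and your argument is precisely the standard one used there: the Hermitian adjacency matrix of an induced subgraph is a principal submatrix of $H(X)$, so Cauchy interlacing for Hermitian matrices (via Courant--Fischer, iterated one deletion at a time) gives the result. Your proof is correct and matches the intended route.
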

Now, we are ready to determine the mixed cospectral mates of paths. Recall that \cite[p.47]{cve}
\begin{equation*}
\text{Spec}_{H}(P_{n})= \{2\cos \dfrac{k\pi}{n+1} , ~ k=1, \ldots , n \}.
\end{equation*}
 In all following lemmas and theorems, $X$ is a mixed cospectral mate of $P_{n}$, so it is of order and size $n$ and $n-1$, respectively. Since as a simple graph, $P_{n}$ is determined by its spectrum, by Theorem \ref{forest}, $X$ has at least two connected components. Furthermore, all eigenvalues of $X$ are simple and in $(-2,2)$. Throughout this section, all graphs we refer to are in Figures \ref{unacceptable} and \ref{acceptable}, see Appendix A.
 
 By Theorem \ref{interlace}, we have the following lemma.
\begin{lemma}
	\label{induced}
	For every odd positive integer $k$, $X$ has no cycles of Type 0 and $C_{k}^{2}$ as an induced subgraph.
\end{lemma}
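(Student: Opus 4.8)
The plan is to combine the explicit spectra of the Type $0$ and Type $2$ cycles, computed earlier in this section, with the interlacing theorem (Theorem \ref{interlace}), exploiting the fact that every eigenvalue of $X$ lies in the open interval $(-2,2)$. Recall this last fact was already recorded just before the lemma: since $X$ is cospectral with $P_{n}$ and $\text{Spec}_{H}(P_{n})=\{2\cos\frac{j\pi}{n+1}:j=1,\dots,n\}\subset(-2,2)$, all eigenvalues of $X$ are strictly between $-2$ and $2$. In particular $\lambda_{1}(X)<2$ and the least eigenvalue of $X$ is strictly greater than $-2$.

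Next I would argue by contradiction. Suppose $X$ contains an induced mixed cycle $Z$ of order $k$ that is either of Type $0$, or of Type $2$ with $k$ odd. By Theorem \ref{Types}, $Z$ is switching equivalent to the standard representative $C_{k}$ or $C_{k}^{2}$ of its type, and since switching preserves the Hermitian spectrum, $Z$ has exactly the spectrum computed above for that representative. For a Type $0$ cycle, the index $k'=0$ in $\{2\cos\frac{2k'}{k}\pi\}$ gives $2\cos 0=2$, so $2\in\text{Spec}_{H}(Z)$. For a Type $2$ cycle with $k$ odd, taking $k'=\tfrac{k-1}{2}$ in $\{2\cos\frac{2k'+1}{k}\pi\}$ yields $2\cos\pi=-2$, so $-2\in\text{Spec}_{H}(Z)$.

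Finally I would invoke interlacing. As $Z$ is an induced subgraph of $X$, Theorem \ref{interlace} gives $\lambda_{1}(Z)\le\lambda_{1}(X)$ and $\lambda_{\min}(Z)\ge\lambda_{\min}(X)$. In the Type $0$ case, $\lambda_{1}(Z)=2\le\lambda_{1}(X)$ contradicts $\lambda_{1}(X)<2$; in the Type $2$, $k$ odd case, $\lambda_{\min}(Z)=-2\ge\lambda_{\min}(X)$ contradicts $\lambda_{\min}(X)>-2$. Either way we obtain a contradiction, so no such induced cycle exists. The only genuinely delicate point is the middle step: one must be sure that the spectrum attributed to $Z$ really is that of the standard type representative. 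This is legitimate precisely because Theorem \ref{Types} classifies every mixed cycle up to switching, and both the spectrum and the interlacing relation are switching invariants — no global switching of $X$ is required, only the switching invariance of the induced subgraph's own spectrum.
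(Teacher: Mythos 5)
Your proof is correct and is exactly the argument the paper intends: the paper derives this lemma as an immediate consequence of Theorem \ref{interlace} together with the facts that all eigenvalues of $X$ lie in $(-2,2)$, that $2\in\text{Spec}_{H}(C_{k})$, and that $-2\in\text{Spec}_{H}(C_{k}^{2})$ for odd $k$. You have simply written out the details the paper leaves implicit.
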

Next, we obtain an upper bound on the maximum degree of $X$.
\begin{lemma}\label{max3}
	The maximum degree of $X$ is at most 3.
\end{lemma}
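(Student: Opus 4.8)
The plan is to argue by contradiction, extracting the degree of a vertex directly from the diagonal of $H(X)^2$ and comparing it against the hypothesis that every eigenvalue of $X$ lies in $(-2,2)$. Suppose some vertex $v=v_k$ of $X$ had degree at least $4$, and write $H=H(X)=[h_{ij}]$. First I would compute the $k$-th diagonal entry of $H^2$. Since $H$ is Hermitian, $h_{jk}=\overline{h_{kj}}$, so
\begin{equation*}
(H^2)_{kk}=\sum_{j} h_{kj}h_{jk}=\sum_{j}\lvert h_{kj}\rvert^2 .
\end{equation*}
By the definition of $H(X)$, every nonzero entry $h_{kj}$ equals one of $1,-1,i,-i$, and hence has modulus $1$; therefore $(H^2)_{kk}$ equals the number of neighbours of $v$, namely $\deg(v)\geq 4$.

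Next I would pass to the spectrum. Because $H$ is Hermitian, the eigenvalues of $H^2$ are exactly the squares $\lambda_i(X)^2$, and the standing hypothesis that all $\lambda_i(X)\in(-2,2)$ forces each of these squares to be strictly less than $4$, so $\lambda_{\max}(H^2)<4$. Applying the Rayleigh quotient to the standard basis vector $e_k$ then gives
\begin{equation*}
\deg(v)=(H^2)_{kk}=e_k^{*}H^2 e_k\leq \lambda_{\max}(H^2)=\max_i \lambda_i(X)^2<4 .
\end{equation*}
Combining the two displays yields $4\leq\deg(v)<4$, a contradiction. Hence no vertex of $X$ has degree exceeding $3$, which is the claim.

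Alternatively, I could route the second step through Theorem~\ref{interlace}: the same diagonal computation applied to the induced subgraph $\langle\{v\}\cup N(v)\rangle$ shows its spectral radius is at least $\sqrt{\deg(v)}\geq 2$, so interlacing would force $X$ to have an eigenvalue of modulus at least $2$, again contradicting $\mathrm{Spec}_H(X)\subseteq(-2,2)$. I expect no serious obstacle here; the only point requiring care is the bookkeeping in the first step, i.e.\ confirming that every incident edge, whether it is undirected or directed in either orientation, contributes exactly $1$ to $(H^2)_{kk}$, after which the eigenvalue estimate is immediate. The essential input is genuinely that the interval $(-2,2)$ is open, which is precisely what makes $\lambda_{\max}(H^2)<4$ strict.
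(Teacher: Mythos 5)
Your proof is correct, but it takes a genuinely different and cleaner route than the paper. The paper disposes of this lemma by a computer search: it checks that every mixed graph on five vertices with maximum degree $4$ is $(-2,2)$-out and then invokes the interlacing theorem (Theorem~\ref{interlace}). You instead observe that for the Hermitian adjacency matrix every nonzero entry has modulus $1$, so $(H^2)_{kk}=\sum_j\lvert h_{kj}\rvert^2=\deg(v_k)$, and the Rayleigh quotient bound $e_k^{*}H^2e_k\leq\lambda_{\max}(H^2)=\max_i\lambda_i(X)^2<4$ (using the standing fact that $\mathrm{Spec}_H(X)\subseteq(-2,2)$) immediately forces $\deg(v_k)\leq 3$. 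This is the Hermitian analogue of the classical fact that the spectral radius of a graph is at least $\sqrt{\Delta}$, and it eliminates the finite case check entirely; it also makes transparent exactly which hypothesis is used (openness of the interval, as you note). Your fallback via the induced subgraph $\langle\{v\}\cup N(v)\rangle$ and Theorem~\ref{interlace} is also valid and is closer in spirit to the paper's argument, but even that version replaces the exhaustive search by the single inequality $\lambda_{\max}(H(Y)^2)\geq(H(Y)^2)_{vv}\geq 4$. The only bookkeeping point, which you flag correctly, is that directed edges in either orientation contribute $\lvert\pm i\rvert^2=1$ to the diagonal of $H^2$, so the computation is insensitive to the orientation data. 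In short: correct, self-contained, and arguably preferable to the proof in the paper.
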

\begin{proof}
	By examining all mixed graphs on five vertices with maximum degree $4$ using a computer search, we see that all such mixed graphs are $(-2,2)$-out, and therefore, by Theorem \ref{interlace}, these mixed graphs cannot be an induced subgraph of $X$, and the proof is complete. 
\end{proof}
The next two lemmas characterize all possible induced mixed $\theta$-graphs of $X$.
\begin{lemma}
	\label{theta}
	The mixed graph $X$ has no induced mixed proper $\theta$-graphs.
\end{lemma}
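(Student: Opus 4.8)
The plan is to deduce the statement from interlacing. Since all eigenvalues of $X$ lie in the open interval $(-2,2)$, Theorem \ref{interlace} forces every induced subgraph of $X$ to have all of its eigenvalues in $(-2,2)$ as well; equivalently, no $(-2,2)$-out mixed graph can occur as an induced subgraph of $X$. Hence it suffices to prove the purely spectral statement that \emph{every mixed proper $\theta$-graph is $(-2,2)$-out}. To keep the number of cases finite I would first reduce modulo switching. The value of each cycle is a switching invariant, and by Theorem \ref{forest} a spanning tree of the $\theta$-graph may be taken standard, so the contributions of the two remaining edges are pinned by the cycle values; thus, up to switching, a mixed $\theta_{p,q,r}$ is determined by the values of its three induced cycles, whose orders are $p+q-2$, $q+r-2$ and $p+r-2$. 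Moreover these three values are not independent: the value of any one cycle equals the product of the values of the other two. Since the characteristic polynomial (Theorem \ref{tcp}) only sees real elementary subgraphs, the spectrum depends only on which of the three cycles are of Type $0$, $1$ or $2$.

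Next I would dispose of the cases that reduce to a forbidden cycle. From Section \ref{s2}, $C_n$ (Type $0$) always has $2$ as an eigenvalue and $C^{2}_{2k+1}$ (odd Type $2$) always has $-2$ as an eigenvalue, whereas Type $1$ cycles and even Type $2$ cycles have their whole spectrum inside $(-2,2)$. Thus if one of the three cycles of the $\theta$-graph is of Type $0$, or is an odd cycle of Type $2$, then that induced cycle is already $(-2,2)$-out, and interlacing makes the whole $\theta$-graph $(-2,2)$-out. A short analysis of the product relation on the three cycle values shows that the only configuration with no Type $0$ cycle is ``two cycles of Type $1$ and one cycle of Type $2$''. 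Discarding the subcase in which the Type $2$ cycle is odd (handled above), the single remaining family to treat is the one in which the two non-Type-$1$ cycles merge into a single \emph{even} cycle of Type $2$.

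For this last family I would argue on the spectrum directly. The minimal instance is $\theta_{3,3,3}=K_{2,3}$: using the bipartite symmetry of the spectrum together with $\operatorname{tr}H^{2}$ (twice the number of edges) and $\operatorname{tr}H^{4}=\lVert H^{2}\rVert_{F}^{2}$, one finds the eigenvalues are forced to be $\pm2$ (and $0$), so this graph is $(-2,2)$-out; the small graphs obtained this way are exactly those recorded as $(-2,2)$-out in Figure \ref{unacceptable} of Appendix A. For larger $p,q,r$ I would either locate a $(-2,2)$-out graph from that finite list as an induced subgraph, or apply the Schwenk-type expansion of Theorem \ref{tsl} to show $\phi(\theta_{p,q,r},2)\le0$ (equivalently, exhibit a test vector $x$ with $x^{*}(2I-H)x\le0$), which gives $\lambda_{1}\ge2$.

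The hard part will be exactly this remaining family, where no single forbidden induced cycle is available and the bound $\lambda_{1}\ge2$ must be read off from the $\theta$-graph itself and shown to persist as the three path-lengths grow. I expect to control the infinitely many sizes by a monotonicity argument --- lengthening a path pushes the largest eigenvalue toward $2$ --- reducing everything to the finitely many base configurations that can be checked by the computer search underlying Appendix A, after which interlacing via Theorem \ref{interlace} finishes the proof.
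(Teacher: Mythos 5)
Your overall strategy coincides with the paper's: show that every mixed proper $\theta$-graph is $(-2,2)$-out and then invoke Theorem \ref{interlace}. Your bookkeeping via the three cycle values and the product relation is a clean way to organize the case analysis, and it correctly isolates the only troublesome switching class, namely two cycles of Type $1$ together with one even cycle of Type $2$; the paper arrives at the same two exceptional orientations $Y_{1}$, $Y_{2}$ of $\theta_{3,3,r}$ by inspection. However, the proposal stops exactly where the real work begins, and the one concrete mechanism you offer for the remaining infinite family does not work. There is no subgraph relation between $\theta_{p,q,r}$ and $\theta_{p,q,r+1}$ (lengthening an internal path is not a vertex or edge deletion), so neither Theorem \ref{interlace} nor Lemma \ref{properinterlace} lets you transfer the bound $\lambda_{1}\ge 2$ from a small base configuration to a larger one; the ``monotonicity'' argument you appeal to has no foundation here. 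Moreover, the phenomenon to be proved is not asymptotic: for the orientation $Y_{2}$ one has $\phi(Y_{2},2)=0$ for every $r$, i.e.\ $2$ is exactly an eigenvalue, so nothing is ``tending to $2$'' --- an exact identity is required.

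The paper supplies the two ingredients your sketch leaves open. First, the reduction to finitely many shapes is carried out on the underlying graph: if $p,q,r\ge4$, or if $p=3$ and $q,r$ are large, then $\theta_{p,q,r}$ contains one of the induced trees (a), (b), (c) of Figure \ref{unacceptable}, which by Theorem \ref{forest} is $(-2,2)$-out in every orientation; the leftover shapes $\theta_{3,4,r}$ with $4\le r\le 6$ are disposed of by a finite computer check. You gesture at ``locating a $(-2,2)$-out graph from the finite list as an induced subgraph'' but do not verify that this covers all large $p,q,r$. Second, for the surviving family $\theta_{3,3,r}$ the paper actually performs the Schwenk expansion that you mention only as an option: it derives $\phi(P_{k},2)=k+1$ and $\phi(D_{k},2)=4$ from Theorem \ref{tsl} and concludes $\phi(Y_{1},2)=6-2r\le 0$ and $\phi(Y_{2},2)=0$, whence $\lambda_{1}\ge 2$ because the characteristic polynomials are monic. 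Until you either carry out this computation or replace it by a genuinely uniform argument in $r$, the proof is incomplete at its decisive step.
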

\begin{proof}
	By contradiction, assume that there is an induced subgraph of $X$ whose underlying graph is $\theta_{p,q,r}$, where $p,q,r\geq3$. If $p,q,r\geq4$, then the Graph (a) is an induced subgraph of $X$, a contradiction. So, we can assume that $p=3$ and $3\leq q\leq r$. If $q\geq 4$ and $r\geq 7$, then the Graph (b) is an induced subgraph of $X$, a contradiction. Also, if $r\geq q\geq 5$, then the Graph (c) is an induced subgraph of $X$, a contradiction. The cases where $q=4$ and $4\leq r\leq 6$ can be investigated using a computer search, and the result is that they are all $(-2,2)$-out, a contradiction. So we have $p=q=3$ and $r\geq 3$. One can see that all mixed graphs with the underlying graph $\theta_{3,3,r}$, except Graphs $Y_{1}$ and $Y_{2}$ in Figure \ref{y_1 y_2} can be switched to a mixed graph having either a mixed cycle of Type 0 or an odd mixed cycle of Type 2 as an induced subgraph and therefore are $(-2,2)$-out. By Theorem \ref{tsl}, the following hold:
	$$\phi(Y_{1},\lambda)=\lambda\phi(D_{r+1},\lambda)-2\phi(P_{r},\lambda)-\phi(D_{r},\lambda)+2\lambda,$$
	$$\phi(Y_{2},\lambda)=\lambda\phi(D_{r+1},\lambda)-2\phi(P_{r},\lambda)-\phi(D_{r},\lambda)+2\phi(P_{r-2},\lambda).$$
	Also, by Theorem \ref{tsl}, we have $\phi(P_{k},\lambda)=\lambda\phi(P_{k-1},\lambda)-\phi(P_{k-2},\lambda)$. By induction on $ k $, one can easily prove that we have $\phi(P_{k},2)=k+1$. Moreover, by Theorem \ref{tsl}, we find that $\phi(D_{k},\lambda)=\lambda\phi(P_{k-1},\lambda)-\lambda\phi(P_{k-3},\lambda)$, which yields that $\phi(D_{k},2)=4$. Therefore, we have $\phi(Y_{1},2)=6-2r$ and $\phi(Y_{2},2)=0$. Since $r\geq 3$, we have $6-2r\leq0$. On the other hand, $\phi(Y_{1},\lambda)$ and $\phi(Y_{2},\lambda)$ are both monic polynomials, so, they both have a root which is at least $2$, a contradiction. The proof is complete.
			\begin{figure}[!h]
				\centering
				\includegraphics[height=2.2cm]{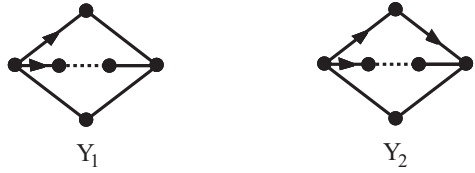}
				\caption{The Graphs $Y_1$ and $Y_2$ are $(-2,2)$-out.}
				\label{y_1 y_2}
			\end{figure}
\end{proof}
\begin{lemma}
	\label{2-theta}
	If $X$ has an induced subgraph with the underlying graph $\theta_{2,q,r}$, where $3\leq q\leq r$, then either $(q,r)=(4,4)$ or $(q,r)=(4,6)$.
\end{lemma}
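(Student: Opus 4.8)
The plan is to combine interlacing (Theorem \ref{interlace}) with the classification of mixed cycles up to switching (Theorem \ref{Types}). Write $u,v$ for the two vertices of degree $3$ in $\theta_{2,q,r}$; they are joined by the edge realizing $P_{2}$ and by the two internally disjoint paths realizing $P_{q}$ and $P_{r}$. The edge $uv$ together with $P_{q}$ induces a mixed cycle on $q$ vertices, and $uv$ together with $P_{r}$ induces a mixed cycle on $r$ vertices (the third cycle, built from $P_{q}$ and $P_{r}$, carries the chord $uv$ and is therefore not induced). Call these induced cycles $C_{q}$ and $C_{r}$. Since every eigenvalue of $X$ lies in $(-2,2)$, interlacing forbids any induced subgraph of $X$ from being $(-2,2)$-out; in particular neither $C_{q}$ nor $C_{r}$ can be of Type $0$ (which always carries the eigenvalue $2$) nor an odd Type $2$ cycle (which carries the eigenvalue $-2$). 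Hence, invoking Lemma \ref{induced}, each of $C_{q},C_{r}$ is of Type $1$, or of Type $2$ with even order. Note also that the two degree-$3$ vertices are compatible with Lemma \ref{max3}, and that $\theta_{2,q,r}$ is not proper, so Lemma \ref{theta} does not already exclude it.

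Next I would reduce the mixed structure to finitely many shapes. As $\theta_{2,q,r}$ has corank $2$, I may switch all edges of a fixed spanning tree to be undirected (the gauge-fixing underlying Theorem \ref{forest}), so that the entire mixed structure is governed by the switching types of the two fundamental cycles $C_{q}$ and $C_{r}$; the type of the remaining cycle on $q+r-2$ vertices is then forced by those of $C_{q}$ and $C_{r}$ (its value is the product of the conjugate of one with the other). Thus for each fixed $(q,r)$ there are only boundedly many mixed $\theta_{2,q,r}$ to inspect. For each admissible choice the characteristic polynomial can be produced from Schwenk's formula (Theorem \ref{tsl}) applied to the chord $uv$: one has $\theta_{2,q,r}\setminus uv$ equal to the mixed cycle on $q+r-2$ vertices, $\theta_{2,q,r}\setminus\{u,v\}$ equal to $P_{q-2}\cup P_{r-2}$, and only the \emph{real} cycles among $C_{q},C_{r}$ (i.e. the Type $2$ ones) contribute to the correction term, since Type $1$ cycles have non-real value.

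The bulk of the work, and the main obstacle, is bounding $q$ and $r$ from above. The intended mechanism is that once $q$ or $r$ is large, $\theta_{2,q,r}$ contains one of the $(-2,2)$-out mixed graphs of Figure \ref{unacceptable} as an induced subgraph; a natural candidate is a mixed cycle carrying a long pendant path, obtained by taking $C_{q}$ together with a proper initial segment of $P_{r}$ attached at $u$, whose spectral radius reaches $2$. By interlacing this would make $X$ itself $(-2,2)$-out, a contradiction, leaving only finitely many pairs $(q,r)$. These residual small cases, together with their admissible type assignments, are then disposed of by a direct computer search, which shows that the only mixed $\theta_{2,q,r}$ with all eigenvalues in $(-2,2)$ occur for $(q,r)=(4,4)$ and $(q,r)=(4,6)$. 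The delicate part is fixing the precise thresholds at which a $(-2,2)$-out subgraph is guaranteed, and correctly handling the even Type $2$ options, so that the case analysis is genuinely finite and exhaustive.
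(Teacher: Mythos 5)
Your overall strategy -- force large $(q,r)$ to contain a $(-2,2)$-out induced subgraph, then finish the finitely many small cases by computer -- is the same as the paper's, but you have left the decisive step unexecuted, and the mechanism you sketch for it would not work as stated. You yourself flag ``bounding $q$ and $r$ from above'' as the main obstacle and then only describe an ``intended mechanism''; without explicit thresholds the case analysis is not finite and the appeal to a computer search is vacuous. Worse, your proposed witness -- an induced mixed cycle with a long pendant path -- is in general \emph{not} $(-2,2)$-out: the admissible components listed in Figure \ref{acceptable} (e.g.\ the Graphs (k), (o)--(v) and the family $(\text{G}_{t})$, which is exactly $C_{4}^{2}$ with a pendant $P_{t}$) are cycles of Type $1$ or $2$ with pendant paths all of whose eigenvalues lie strictly inside $(-2,2)$. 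So whether such a subgraph forces an eigenvalue outside $(-2,2)$ depends delicately on the cycle's type and the path's length, and you give no analysis of this.

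The paper sidesteps this entirely by extracting induced \emph{trees} rather than unicyclic subgraphs: for $q\geq 6$ it finds the Graph (c), for $q\geq5,\ r\geq8$ the Graph (b), for $q=4,\ r\geq7$ the Graphs (l) or (j), and for $q=3,\ r\geq5$ the Graph (f), these being (essentially) the Smith graphs $\widetilde{D}_{n},\widetilde{E}_{6},\widetilde{E}_{7},\widetilde{E}_{8}$ and close relatives. The point of using forests is that by Theorem \ref{forest} every mixed graph on such an underlying forest is switching equivalent to the forest itself, so the orientation data you spend effort gauge-fixing becomes irrelevant and Smith's classification applies verbatim to give spectral radius at least $2$; interlacing (Theorem \ref{interlace}) then yields the contradiction. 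This reduces the lemma to the six residual underlying graphs $\theta_{2,5,5}$, $\theta_{2,5,6}$, $\theta_{2,5,7}$, $\theta_{2,4,5}$, $\theta_{2,3,3}$, $\theta_{2,3,4}$, which are checked by computer (and there your type classification of the two induced cycles would indeed cut down the search). To repair your argument you would need to supply these explicit forest witnesses, or some equally concrete family, together with the exact ranges of $(q,r)$ in which each one embeds as an induced subgraph.
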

\begin{proof}
	First, let $q\geq6$. So, the Graph (c) is an induced subgraph of $X$, a contradiction. Also, if $q\geq5$ and $r\geq8$, then the Graph (b) is an induced subgraph of $X$, a contradiction. Moreover, if $q=4$ and $r\geq7$, then $X$ has one of the Graphs (l) of order 5 or (j) as an induced subgraph, a contradiction. Finally, if $q=3$ and $r\geq5$, then $X$ has the Graph (f) as an induced subgraph, a contradiction. Now, by a computer search, one can see that a mixed graph with one of the underlying graphs $\theta_{2,5,5}$, $\theta_{2,5,6}$, $\theta_{2,5,7}$, $\theta_{2,4,5}$, $\theta_{2,3,3}$ or $\theta_{2,3,4}$ is $(-2,2)$-out, and the proof is complete.
\end{proof}
Now, we investigate the induced mixed cycles of $X$.
\begin{lemma}
	\label{proper}
	No component of $X$ has a mixed cycle of order at least $8$ as a proper induced subgraph.
\end{lemma}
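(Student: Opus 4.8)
The plan is to argue by contradiction. Suppose some component $C$ of $X$ contains a mixed cycle $Z$ of order $m\geq 8$ as a proper induced subgraph. By Lemma \ref{induced}, $Z$ is neither of Type $0$ nor an odd $C_m^2$, so $Z$ is switching equivalent to $C_m^1$, or to $C_m^2$ with $m$ even. Since $C$ is connected and $|V(C)|>m$, some vertex $w\in V(C)\setminus V(Z)$ is adjacent to $Z$, and because the maximum degree of $X$ is at most $3$ (Lemma \ref{max3}), $w$ has at most three neighbours on $Z$. First I would reduce to the case where $w$ is adjacent to exactly one vertex of $Z$. Indeed, if $w$ is adjacent to two vertices $v_a,v_b$ of $Z$, then $\langle V(Z)\cup\{w\}\rangle$ is a mixed $\theta$-graph: writing the two arcs of $Z$ between $v_a$ and $v_b$ as $P_{t+1}$ and $P_{m-t+1}$ and the path through $w$ as $P_3$, we obtain $\theta_{3,t+1,m-t+1}$ when $2\leq t\leq m-2$ and $\theta_{2,3,m}$ when $t\in\{1,m-1\}$. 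The former is a proper $\theta$-graph, excluded by Lemma \ref{theta}, while the latter is excluded by Lemma \ref{2-theta} since $(3,m)\notin\{(4,4),(4,6)\}$ for $m\geq 8$. The three-neighbour attachment can be treated similarly by passing to a suitable induced subgraph (or, failing a clean reduction, by a direct verification that such a configuration is $(-2,2)$-out); this combinatorial reduction is the step I expect to require the most care.

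So assume $W:=\langle V(Z)\cup\{w\}\rangle$ is the mixed cycle $Z$ together with a single pendant vertex $w$, whose unique neighbour on $Z$ I call $v_1$. Applying the Schwenk-type formula of Theorem \ref{tsl}(i) at $w$ (which lies on no cycle of $W$, so the cycle sum vanishes) gives
\begin{equation*}
\phi(W,\lambda)=\lambda\,\phi(Z,\lambda)-\phi(P_{m-1},\lambda),
\end{equation*}
since $W\setminus w=Z$ and $W\setminus\{w,v_1\}$ is a mixed path on $m-1$ vertices, whose characteristic polynomial equals $\phi(P_{m-1},\lambda)$ by Theorem \ref{forest}. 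I would then evaluate at $\lambda=2$. From the path recursion one has $\phi(P_{m-1},2)=m$, while a short computation with the coefficient formula \eqref{eza} (equivalently, directly from the spectrum $\{2\cos\frac{(2k+1)\pi}{m}\}$) gives $\phi(C_m^2,2)=\phi((C_m,-),2)=4$. Theorem \ref{spectype1} then yields $\phi(C_m^1,2)=2$, because $(\phi(C_m^1,2))^2=\phi((C_{2m},-),2)=4$ and every eigenvalue of $C_m^1$ is strictly less than $2$, forcing the positive root.

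Consequently $\phi(W,2)=4-m$ when $Z=C_m^1$ and $\phi(W,2)=8-m$ when $Z=C_m^2$, so in both cases $\phi(W,2)\leq 0$ for every $m\geq 8$. Since $\phi(W,\lambda)$ is monic and tends to $+\infty$ as $\lambda\to+\infty$, it has a real root in $[2,\infty)$, whence $\lambda_1(W)\geq 2$. But $W$ is an induced subgraph of $X$, so by the interlacing theorem (Theorem \ref{interlace}) $\lambda_1(W)\leq\lambda_1(X)<2$, a contradiction. This rules out a mixed cycle of order at least $8$ occurring as a proper induced subgraph of any component. The heart of the argument is the sign computation $\phi(W,2)\leq 0$, which rests on the values $\phi(C_m^1,2)=2$ and $\phi(C_m^2,2)=4$; the main obstacle, as noted, is the reduction to the single-pendant case, in particular excluding two- and three-fold attachment of $w$ to $Z$ without producing one of the already-forbidden $\theta$-configurations.
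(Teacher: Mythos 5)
Your overall strategy matches the paper's: argue by contradiction, take a vertex $w$ outside the induced cycle $Z$ with a neighbour on it, and split according to whether $w$ has one, two or three neighbours on $Z$ (three being the maximum by Lemma \ref{max3}). The two-neighbour case you close essentially as the paper does when the two neighbours are non-adjacent (an induced proper $\theta$-graph, Lemma \ref{theta}), and your use of Lemma \ref{2-theta} to exclude $\theta_{2,3,m}$ in the adjacent sub-case is a legitimate substitute for the paper's appeal to its Graph (d). Where you genuinely diverge is the one-neighbour case: the paper simply observes that the cycle-plus-pendant contains the forbidden Graph (c) from the Appendix A catalogue, whereas you compute $\phi(W,\lambda)=\lambda\phi(Z,\lambda)-\phi(P_{m-1},\lambda)$ via Theorem \ref{tsl} and evaluate at $\lambda=2$, using $\phi(C_m^1,2)=2$, $\phi(C_m^2,2)=4$ and $\phi(P_{m-1},2)=m$ to get $\phi(W,2)\le 0$ for $m\ge 8$, hence $\lambda_1(W)\ge 2$, contradicting interlacing. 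That computation is correct (including the boundary case $m=8$ of Type $2$, where $\phi(W,2)=0$ still forces the eigenvalue $2$), and it has the virtue of being self-contained rather than resting on the computer-checked/Smith catalogue.

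The one real gap is the three-neighbour case, which you leave as a sketch. The reduction you propose, passing to an induced $\theta$-subgraph, does most of the work but does not close the case. Concretely, if the neighbours $v_a,v_b,v_c$ cut $Z$ into arcs of lengths $d_1,d_2,d_3$, then deleting the interior of one arc leaves an induced $\theta_{2,d_i+2,d_j+2}$ (or a forbidden $\theta_{2,3,\cdot}$ when some arc has length $1$), and Lemmas \ref{theta} and \ref{2-theta} force every pair $\{d_i,d_j\}$ to be $\{2,2\}$ or $\{2,4\}$; for $m\ge 8$ exactly one configuration survives, namely $m=8$ with arcs $(2,2,4)$. In that configuration all the induced $\theta$-subgraphs are among the permitted $\theta_{2,4,4}$ and $\theta_{2,4,6}$, so a separate verification is unavoidable; your fallback of checking directly that it is $(-2,2)$-out would work, and is in effect what the paper does by exhibiting a member of its Graph (e) family (respectively Graph (d), when two of the three neighbours are adjacent) as an induced subgraph. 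So the proposal is sound in outline and in its computations, but this last configuration must actually be dispatched for the proof to be complete.
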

\begin{proof}
	By contradiction, assume that a mixed cycle of order at least $8$, say $C$, is a proper induced subgraph of a component of $X$, say $H$. By Lemma \ref{induced}, $C$ is of Type $1$ or $2$. There is a vertex $u$ in $H$ which is not in $C$ and is adjacent to at least one vertex in $C$. By Lemma \ref{max3}, $u$ has degree at most 3. If $u$ is adjacent to exactly one vertex in $C$, then $H$ has the Graph (c) as an induced subgraph, a contradiction. Assume that $u$ is adjacent to exactly two vertices in $C$, say $v$ and $w$. If $v$ and $w$ are not adjacent, then $H$ has an induced subgraph whose underlying graph is a proper $\theta$-graph, which contradicts Lemma \ref{theta}. So, $v$ and $w$ are adjacent, and by Lemma \ref{induced}, the induced cycle on the vertices $u$, $v$ and $w$ is of Type $1$. However, since $C$ has at least $8$ vertices, $H$ has the Graph (d) as an induced subgraph, a contradiction. The only case left to examine is when $u$ is adjacent to three vertices in $C$. If no two of these three vertices are adjacent, then one can see that $H$ has one of the family of the Graph (e) as an induced subgraph, a contradiction. So, at least two of these three vertices are adjacent. Therefore, $H$ has the Graph (d) as an induced subgraph, a contradiction, and the proof is complete.
\end{proof}
\begin{lemma}\label{8induced-type2}
	Let $k\geq8$ be an integer. Then, $C_{k}^{2}$ is not an induced subgraph of $X$.
\end{lemma}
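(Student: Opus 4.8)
The plan is to split on the parity of $k$ and reduce everything to facts already established. For odd $k$, Lemma \ref{induced} already forbids $C_k^2$ as an induced subgraph of $X$ (indeed an odd mixed cycle of Type $2$ carries $-2$ in its spectrum, since the spectrum of $C_k^2$ equals $\{2\cos\frac{2j+1}{k}\pi : j=0,\ldots,k-1\}$ and the value $j=\frac{k-1}{2}$ gives $2\cos\pi=-2$; by Theorem \ref{interlace} this cannot be interlaced inside a mixed graph all of whose eigenvalues lie in $(-2,2)$). Hence the only remaining case is even $k\geq 8$, and the whole difficulty is concentrated there, where every eigenvalue of $C_k^2$ lies strictly inside $(-2,2)$ and interlacing alone yields nothing.

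The key preliminary observation I would isolate is that for even $k$ every eigenvalue of $C_k^2$ has multiplicity at least $2$. This follows from $\text{Spec}_H(C_k^2)=\text{Spec}(C_k,-)$ recorded above: writing $k=2m$, Theorem \ref{spectype1} gives $\phi((C_{2m},-),\lambda)=(\phi(C_m^1,\lambda))^2$, and since the eigenvalues $2\cos\frac{2j+1}{2m}\pi$ of $C_m^1$ are pairwise distinct, each eigenvalue of $(C_{2m},-)$ occurs with multiplicity exactly $2$. (Equivalently, one reads this straight off the closed form: when $k$ is even the involution $j\mapsto k-1-j$ on $\{0,\ldots,k-1\}$ is fixed-point free and satisfies $\cos\frac{2(k-1-j)+1}{k}\pi=\cos\frac{2j+1}{k}\pi$, so equal values are paired.)

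With this in hand I would argue by contradiction. Suppose $C_k^2$ is an induced subgraph of $X$ for some even $k\geq 8$. Because $C_k^2$ is connected, its vertex set lies inside a single component $H$ of $X$, leaving exactly two possibilities. If $C_k^2$ is all of $H$, then, since $X$ is a block-diagonal (direct) sum over its components, the eigenvalues of $C_k^2$ appear among those of $X$ with at least their multiplicities in $C_k^2$; in particular $X$ would have an eigenvalue of multiplicity at least $2$, contradicting the fact that all eigenvalues of $X$ are simple. Otherwise $C_k^2$ is a proper induced subgraph of $H$, so $H$ contains a mixed cycle of order $k\geq 8$ as a proper induced subgraph, contradicting Lemma \ref{proper}. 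Either branch gives a contradiction, so no such induced $C_k^2$ exists.

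I do not expect a genuine obstacle here: once the multiplicity-two phenomenon for even Type-$2$ cycles is identified, the two structural cases close automatically, one by a multiplicity count against the simplicity of the spectrum of $X$ and the other by Lemma \ref{proper}. The only point demanding a little care is the clean dichotomy \emph{whole component versus proper induced subgraph}, which rests on the connectedness of $C_k^2$ to confine it to one component and on the simplicity of $\text{Spec}_H(X)$ to rule out the component case.
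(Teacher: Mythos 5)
Your proposal is correct and follows essentially the same route as the paper: dispose of odd $k$ via Lemma \ref{induced}, observe that for even $k$ every eigenvalue of $C_k^2$ has multiplicity $2$ so it cannot be a component of $X$ (whose spectrum is simple), and invoke Lemma \ref{proper} to exclude it as a proper induced subgraph of a component. The extra justifications you supply (the pairing $j\mapsto k-1-j$ and the appeal to Theorem \ref{spectype1}) are correct elaborations of the multiplicity claim the paper states without proof.
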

\begin{proof}
	If $k$ is odd, then by Lemma \ref{induced}, we are done. So, let $k\geq8$ be an even integer. By contradiction, suppose that $C_{k}^{2}$ is an induced subgraph of $X$. We know that $C_{k}^{2}$ has eigenvalues of multiplicity $2$, so it cannot be a component of $X$. By Lemma \ref{proper}, $C_{k}^{2}$ cannot be a proper induced subgraph of a component of $X$, and the proof is complete.
\end{proof}
\begin{lemma}
	If $C_{6}^{2}$ is an induced subgraph of a component of $X$, say $H$, then $H$ is one of the mixed Graphs (g) or (h).
\end{lemma}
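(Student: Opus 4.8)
The plan is to show that a component $H$ containing an induced $C_6^2$ is forced, by the structural restrictions already in hand, to be $C_6^2$ with either a single pendant vertex or two pendant vertices at antipodal cycle vertices, and that these are precisely Graphs (g) and (h). First I would note that $C_6^2$ cannot itself be a component of $X$: its spectrum $\{2\cos\frac{2k+1}{6}\pi\}$ consists of $\pm\sqrt{3}$ and $0$, each of multiplicity $2$, whereas every component of $X$ has simple eigenvalues. Hence $C_6^2$ is a \emph{proper} induced subgraph of $H$, so some vertex $u\in V(H)$ outside the cycle is adjacent to it, and by Lemma \ref{max3} this $u$ has at most three neighbours on the cycle.

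The main structural step is to prove $u$ has exactly one neighbour on $C_6^2$. If $u$ had two non-adjacent cycle-neighbours, then the induced subgraph on the cycle together with $u$ would have underlying graph a proper $\theta$-graph ($\theta_{3,3,5}$ for neighbours at cyclic distance $2$, or $\theta_{3,4,4}$ for antipodal neighbours), contradicting Lemma \ref{theta}. If $u$ had two adjacent cycle-neighbours, the same induced subgraph would be $\theta_{2,3,6}$, excluded by Lemma \ref{2-theta}. The finitely many configurations with three cycle-neighbours I would settle by direct computation, showing each resulting $7$-vertex mixed graph is $(-2,2)$-out and hence forbidden by Theorem \ref{interlace}. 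Applying this to every external vertex adjacent to the cycle shows $H$ is $C_6^2$ with rooted trees attached, each at a single cycle vertex; by Lemma \ref{max3} each cycle vertex carries at most one tree and each tree has maximum degree at most $3$.

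It then remains to bound these trees, which is where the real work lies. The tool is the Schwenk-type expansion of Theorem \ref{tsl} combined with evaluation at $\lambda=2$, exactly as in the proof of Lemma \ref{theta}: since the underlying graph is bipartite, the Hermitian spectrum is symmetric about $0$, so it suffices to control the largest eigenvalue, and a monic characteristic polynomial that is non-positive at $2$ forces an eigenvalue $\ge 2$. One computes $\phi(C_6^2\text{ with one pendant},\lambda)=\lambda^7-7\lambda^5+13\lambda^3-3\lambda$, which takes the value $2$ at $\lambda=2$ with largest root below $2$, so a single pendant is admissible (this is Graph (g)); extending that pendant to a path of length two yields $\phi(\cdot,2)=0$, so no attached tree may reach distance two from the cycle, and every tree is a single pendant vertex. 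For two pendant vertices one checks that the adjacent and distance-two placements give $\phi(\cdot,2)\le 0$ (hence are $(-2,2)$-out), while the antipodal placement keeps all eigenvalues in $(-2,2)$; this last graph is Graph (h). Finally, any configuration with three or more pendant vertices contains a forbidden two-pendant configuration as an induced subgraph, so by Theorem \ref{interlace} it cannot occur, leaving precisely Graphs (g) and (h).

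I expect the main obstacle to be the clean organisation of the finite case analysis—covering all placements of pendants and all three-neighbour attachments without omission—and verifying in each surviving case that the largest eigenvalue is \emph{strictly} below $2$ (not merely $\le 2$) and that the spectrum is simple, so that Graphs (g) and (h) genuinely qualify as components of a mixed cospectral mate of a path.
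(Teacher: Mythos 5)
There is a genuine gap in your structural reduction, and it causes you to misidentify Graph (h). From the fact that each external vertex adjacent to the cycle has exactly one neighbour on $C_6^2$, you conclude that $H$ is ``$C_6^2$ with rooted trees attached.'' This does not follow: two external vertices $v_1,v_2$, each with a single cycle-neighbour $u_1,u_2$, may themselves be adjacent, creating a second cycle $u_1v_1v_2u_2u_1$. When $u_1$ and $u_2$ are adjacent on the hexagon, the underlying graph is $\theta_{2,4,6}$ --- precisely one of the two configurations that your own appeal to Lemma \ref{2-theta} leaves alive, which should have been a warning sign. This bicyclic mixed graph (with the quadrilateral of Type 2) \emph{is} Graph (h): it has $8$ vertices and $9$ edges, as one can confirm from the cospectral mate $P_2\cup P_4\cup(\mathrm{h})$ of $P_{14}$. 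Your proposal never generates it, so if carried out it would prove a statement strictly weaker than the lemma.

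Relatedly, the graph you identify as (h) --- the hexagon with two pendant vertices at antipodal positions --- is the paper's Graph $(\mathrm{g}_1)$, and it is excluded, but not because it is $(-2,2)$-out: it has a repeated eigenvalue, which is incompatible with $X$ being cospectral with a path (all eigenvalues of $P_n$ are simple). So your two-pendant case analysis has the right flavour (adjacent and distance-two placements are killed by eigenvalue location, via induced subgraphs of the family (e) in the paper's treatment) but reaches the wrong survivor: the antipodal placement dies by multiplicity, and the missing adjacent-external-pair configuration is the one that survives. The rest of your outline --- excluding $C_6^2$ as a component by multiplicity, forcing exactly one cycle-neighbour per external vertex via Lemmas \ref{theta} and \ref{2-theta}, bounding growth away from the cycle, and invoking interlacing (Theorem \ref{interlace}) --- matches the paper's argument in substance; the flaw is confined to, but fatal for, the classification of the $8$-vertex case.
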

\begin{proof}
	Since $C_{6}^{2}$ has eigenvalues of multiplicity 2, we have $H\not=C_{6}^{2}$. We show that each vertex in $H$ is adjacent to at least one vertex in $C_{6}^{2}$. By contradiction, assume that there is a vertex $v$ in $H$ which has distance 2 from $C_{6}^{2}$, and let $u$ be the vertex adjacent to $v$ and a vertex of $C_{6}^{2}$. If $u$ is adjacent to exactly one vertex in $C_{6}^{2}$, then the Graph (a) is an induced subgraph of $H$, a contradiction. So we can assume that $u$ is adjacent to exactly two vertices of $C_{6}^{2}$, say $z$ and $w$. If $z$ and $w$ are not adjacent, then $H$ has an induced mixed proper $\theta$-graph, which contradicts Lemma \ref{theta}. So, $z$ and $w$ are adjacent, and by Lemma \ref{induced}, the induced cycle on the vertices $u$, $z$ and $w$ is of Type $1$, and $H$ has the Graph (f) as an induced subgraph, a contradiction.
	
	So, $H$ has order at most $12$. Let $v$ be a vertex in $V(H)\backslash V(C_{6}^{2})$. If $v$ is adjacent to exactly two vertices of $C_{6}^{2}$, then $H$ has an induced subgraph with the underlying graph $\theta_{3,p,q}$, where $p,q\geq2$, which contradicts Lemmas \ref{theta} and \ref{2-theta}. If $v$ is adjacent to exactly three vertices of $C_{6}^{2}$, then none of these three vertices are adjacent, because otherwise $H$ has an induced subgraph with the underlying graph $\theta_{2,3,p}$, where $p\geq3$, a contradiction. So, a mixed graph with the underlying Graph (m) is an induced subgraph of $H$, a contradiction. Therefore, $v$ is adjacent to exactly one vertex of $C_{6}^{2}$. If $|V(H)|=7$, then $H$ is the Graph (g). Now, suppose that $|V(H)|\geq8$ and let $v_{1}$ and $v_{2}$ be two vertices in $V(H)\backslash V(C_{6}^{2})$. For $i=1,2$, let $u_{i}$ be the vertex in $C_{6}^{2}$ adjacent to $v_{i}$. If $v_{1}$ and $v_{2}$ are adjacent, then $u_{1}$ and $u_{2}$ are also adjacent, because otherwise a mixed proper $\theta$-graph is an induced subgraph of $H$, a contradiction. Now, the induced mixed cycle on the vertices $u_{1}$, $u_{2}$, $v_{1}$ and $v_{2}$ is not of Type 1, because otherwise the Graph (l) of order 5 is an induced subgraph of $H$, a contradiction. So, by Lemma \ref{induced}, this cycle is of Type 2. If $v_{1}$ and $v_{2}$ are not adjacent, then $u_{1}$ has distance 3 from $u_{2}$, because otherwise one of the family of the Graph (e) is an induced subgraph of $H$, a contradiction. Therefore, we have $|V(H)|=8$. Since the Graph ($\text{g}_{1}$) in Figure \ref{g_1} has eigenvalues of multiplicity 2,  $H$ is switching equivalent to the Graph (h), and the proof is complete. 	
\end{proof}
\begin{figure}[!h]
	\centering
	\includegraphics[height=2cm]{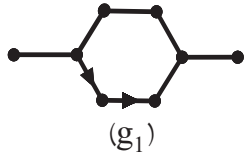}
	\caption{The Graph $(\text{g}_1)$ has eigenvalues of multiplicity 2.}
	\label{g_1}
\end{figure}
\begin{lemma}\label{induced-type1}
	If $C_{j}^{1}$, where $j\geq3$, is an induced subgraph of a component of $X$, say $H$, then either $H=C_{j}^{1}$ or $H$ is the Graph (k).
\end{lemma}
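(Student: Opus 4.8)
The plan is to separate the cycle order $j$ into a large range, where Lemma \ref{proper} does all the work, and a small range, where a structural case analysis combined with a spectral check is required. For $j\geq 8$ the conclusion is immediate: a mixed cycle of order at least $8$ cannot be a \emph{proper} induced subgraph of any component by Lemma \ref{proper}, so if $C_{j}^{1}$ is induced in $H$ it must be all of $H$, giving $H=C_{j}^{1}$. In contrast to the cycle $C_{6}^{2}$, the cycle $C_{j}^{1}$ has only simple eigenvalues, namely $2\cos\frac{2k+1}{2j}\pi$, so it is not excluded from being a component, which is exactly why $H=C_{j}^{1}$ is one of the allowed outcomes. This reduces everything to the finitely many cases $3\leq j\leq 7$.

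For $3\leq j\leq 7$ I would assume $H\neq C_{j}^{1}$ and choose a vertex $u\in V(H)\setminus V(C_{j}^{1})$ adjacent to the cycle; by Lemma \ref{max3} we have $\deg u\leq 3$. The first task is to control how $u$ meets $C_{j}^{1}$. If $u$ were joined to two non-adjacent cycle vertices, the underlying graph would contain a $\theta$-graph, contradicting Lemma \ref{theta} in the proper case and Lemma \ref{2-theta} otherwise; if $u$ were joined to two adjacent cycle vertices it would create an attached triangle, i.e.\ an underlying $\theta_{2,q,r}$, which Lemma \ref{2-theta} pins to $(q,r)\in\{(4,4),(4,6)\}$ and whose finitely many realisations can be checked directly; and joining $u$ to three cycle vertices forces one of the forbidden configurations of Appendix A or again a proper or $2$-$\theta$-graph. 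Hence every neighbour of the cycle outside $C_{j}^{1}$ attaches at a single cycle vertex, and $H$ is $C_{j}^{1}$ with trees hanging off it; by Lemma \ref{max3} each cycle vertex is the root of at most one such tree and the branching is bounded.

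It then remains to bound and test these decorations. Using the Schwenk-type recursion of Theorem \ref{tsl}, exactly as in the computation of $\phi(Y_{1},\lambda)$ and $\phi(Y_{2},\lambda)$ in Lemma \ref{theta} and as in the closed forms recorded for $G_{t}$ and $H_{t}$, one can write down the characteristic polynomial of $C_{j}^{1}$ carrying prescribed pendant trees in closed form. The cospectral-mate hypotheses then bite hard: every eigenvalue of $H$ must be simple and lie in $(-2,2)$, and, via Theorem \ref{interlace}, $H$ may contain none of the $(-2,2)$-out graphs of Appendix A as an induced subgraph. A long pendant path, a branch vertex inside a pendant tree, or pendants at two different cycle vertices quickly produces either such a forbidden induced subgraph or a repeated eigenvalue; imposing these two constraints eliminates all the candidate decorations except the single one realised by the Graph (k), yielding $H=C_{j}^{1}$ or $H$ is the Graph (k).

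The step I expect to be the main obstacle is this last residual verification. After the structural reductions $H$ is confined to a short explicit list of mixed graphs for each $j\in\{3,4,5,6,7\}$, but deciding precisely which of them has all eigenvalues simple and inside $(-2,2)$ is delicate: the coincidences responsible for repeated eigenvalues occur only for special congruence classes of the path lengths, the same phenomenon that makes $2\cos\frac{\pi}{4}$ a repeated eigenvalue of $G_{t}$ when $4\mid t$. Consequently the enumeration of attachment patterns must be carried out exhaustively and each candidate tested, typically by the computer searches already invoked in Lemmas \ref{max3} and \ref{2-theta}, to be sure that no admissible component other than $C_{j}^{1}$ and the Graph (k) is overlooked.
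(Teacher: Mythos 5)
Your reduction to $j\leq 7$ via Lemma \ref{proper}, and your elimination of a neighbour $v$ attached to two or three cycle vertices via the $\theta$-graph lemmas and the forbidden subgraphs of Appendix A, match the paper's argument. (One small inaccuracy there: a vertex joined to two \emph{adjacent} cycle vertices creates an underlying $\theta_{2,3,j}$, which Lemma \ref{2-theta} excludes outright since $q=3\neq 4$; the exceptional pairs $(4,4)$ and $(4,6)$ never arise in this configuration, so there are no ``realisations to check directly''.)

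The genuine gap is in the case you yourself flag as the main obstacle: a vertex attached to exactly one cycle vertex. You reduce to ``$C_{j}^{1}$ with pendant trees'' and then propose an enumeration of decorations tested by Schwenk's formula and computer search, but you never bound that enumeration -- pendant trees can a priori be arbitrarily large, and your assertion that long paths or branch vertices ``quickly produce'' a forbidden subgraph or a repeated eigenvalue is exactly the claim that needs proof. The paper closes this case in one stroke using a fact already recorded in Appendix A: for $4\leq j\leq 7$, the mixed cycle $C_{j}^{1}$ with a \emph{single} pendant vertex is the Graph (l) of order $j+1\in\{5,\ldots,8\}$, which is $(-2,2)$-out, so by Theorem \ref{interlace} no pendant at all is permitted and $j\geq 4$ is finished immediately. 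Only $j=3$ survives, where one pendant yields the admissible Graph (k); the paper then takes any $5$-vertex induced subgraph consisting of (k) together with one further neighbour and checks that every such extension contains one of the Graphs (d), (f) or an underlying $\theta_{2,3,p}$, all forbidden, so $H$ cannot properly contain (k). Without the Graph (l) observation your procedure is not a finite verification, and with it the entire Schwenk-polynomial apparatus you set up becomes unnecessary.
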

\begin{proof}
	By contradiction, assume that $H$ is neither $C_{j}^{1}$ nor the Graph (k). By Lemma \ref{proper}, we have $j\leq7$. Since $H\not=C_{j}^{1}$, there is a vertex $v$ in $V(H)\backslash V(C_{j}^{1})$ adjacent to a vertex of $C_{j}^{1}$. First, assume that $v$ is adjacent to exactly one vertex in $C_{j}^{1}$. If $ j\geq 4 $, then $ H $ has the Graph (l) of order $ j+1 $ as an induced subgraph, a contradiction. Therefore, we have $j=3$. So, the Graph (k) is an induced proper subgraph of $H$. Therefore, there exists an induced subgraph of $H$ of order 5, say $Y$, in which a vertex $u$ is adjacent to at least one of the vertices of the induced subgraph (k). It can be seen that $Y$ has one of the Graphs (d), (f) or a mixed graph with the underlying graph $\theta_{2,3,p}$, where $p\geq3$, as an induced subgraph, a contradiction.
	
	If $v$ is adjacent to exactly two vertices in $C_{j}^{1}$, then $H$ has a mixed graph with the underlying graph $\theta_{3,q,r}$, where $q,r\geq2$, as an induced subgraph, which contradicts Lemmas \ref{theta} and \ref{2-theta}. Therefore, $v$ is adjacent to three vertices in $C_{j}^{1}$. If $j=3$, then $H$ has an induced subgraph with the underlying graph $K_{4}$. Using a computer search, we see that all such mixed graphs are $(-2,2)$-out, a contradiction. So, $j\geq4$. If two of the three vertices adjacent to $v$ are adjacent, then $H$ has an induced subgraph with the underlying graph $\theta_{2,3,p}$, where $p\geq3$, a contradiction. Therefore, we have $j\geq6$. If $j=6$, then $H$ has an induced subgraph with the underlying Graph (m), a contradiction. So, $j=7$ and $H$ has an induced subgraph with the underlying graph $\theta_{2,4,5}$, a contradiction. The proof is complete.
\end{proof}
\begin{lemma}
	Let $H$ be a component of $X$. If $C_{4}^{2}$ is an induced subgraph of $H$ and $C_{6}^{2}$ is not, then $H$ is one of the Graphs (o), (p), (q), (r), (s), (t), (u), (v), ($ \text{G}_{t} $) or ($ \text{G}_{t}^{t+m} $).
\end{lemma}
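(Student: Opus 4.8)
The plan is to mimic the structure of the preceding two lemmas (the $C_6^2$ lemma and Lemma \ref{induced-type1}): fix the induced copy of $C_4^2$ inside the component $H$ and analyse, one vertex at a time, how the remaining vertices of $H$ attach to it, discarding every attachment that produces a forbidden induced subgraph. Since $\mathrm{Spec}_H(C_4^2)=\{\sqrt2,\sqrt2,-\sqrt2,-\sqrt2\}$ has eigenvalues of multiplicity $2$ while all eigenvalues of $X$ are simple, $C_4^2\neq H$; hence there is a vertex of $H$ outside $C_4^2$ adjacent to it.

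First I would determine the neighbourhoods of external vertices: every $v\in V(H)\setminus V(C_4^2)$ adjacent to the cycle is adjacent to exactly one of its vertices. Indeed, adjacency to two \emph{adjacent} cycle vertices yields an induced $\theta_{2,3,4}$, contradicting Lemma \ref{2-theta}; adjacency to two \emph{opposite} cycle vertices yields an induced proper $\theta$-graph $\theta_{3,3,3}$, contradicting Lemma \ref{theta}; adjacency to three cycle vertices is eliminated by a computer check showing the resulting five-vertex graphs are $(-2,2)$-out; and adjacency to all four is impossible by the degree bound of Lemma \ref{max3}. Since each cycle vertex already has degree $2$ inside $C_4^2$, that same bound forces each cycle vertex to carry at most one external neighbour, so the part of $H$ lying outside $C_4^2$ decomposes into at most four subtrees, each rooted at a distinct vertex of the cycle.

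Next I would show that each pendant subtree is a path and that nonempty pendant paths may only sit at a pair of opposite cycle vertices. A branch vertex inside a pendant tree, a pendant path doubling back to the cycle, or two pendant paths hung at \emph{adjacent} cycle vertices each produce either a proper $\theta$-graph (Lemma \ref{theta}), a $\theta_{2,q,r}$ outside the admissible list of Lemma \ref{2-theta}, a new induced cycle excluded by Lemmas \ref{induced}, \ref{proper}, \ref{8induced-type2} and \ref{induced-type1} or by the standing hypothesis that $C_6^2$ is not induced, or one of the finitely many small $(-2,2)$-out graphs catalogued in Appendix A; three or more pendant paths are ruled out in the same way. What remains is precisely $C_4^2$ carrying a single pendant path (the family $G_t$), $C_4^2$ carrying two pendant paths at opposite vertices (the family $G_t^{t+m}$), and the short and degenerate configurations, which are exactly the Graphs (o)--(v).

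The step I expect to be the main obstacle is the finite bookkeeping at the end: cleanly separating the genuinely infinite families $G_t$ and $G_t^{t+m}$ from the finitely many short exceptional graphs that must be listed individually as (o)--(v), and verifying that the forbidden-subgraph catalogue (the two $\theta$-graph lemmas, the cycle lemmas, and the computer-verified small $(-2,2)$-out graphs) jointly eliminate every surviving configuration. In particular one must check that a long pendant path can never reconnect to the cycle without creating a $\theta$-graph, and pin down exactly where the hypothesis that $C_6^2$ is not induced is needed, namely to forbid attachments that would turn $C_4^2$ into an even Type-$2$ hexagon.
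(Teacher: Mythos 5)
There is a genuine gap: you force $H$ to be unicyclic, and it need not be. After establishing that every external vertex adjacent to $C_{4}^{2}$ meets it in exactly one vertex, you assert that the part of $H$ outside the cycle ``decomposes into at most four subtrees, each rooted at a distinct vertex of the cycle,'' and you dismiss any ``pendant path doubling back to the cycle'' as producing a forbidden subgraph. That is false. Take two external vertices $v_{1}\sim v_{2}$ with $v_{1}$ attached to $u_{1}$ and $v_{2}$ attached to $u_{2}$, where $u_{1}u_{2}$ is an edge of $C_{4}^{2}$: the underlying graph is $\theta_{2,4,4}$, which is precisely one of the two cases that Lemma \ref{2-theta} \emph{admits}, so none of your forbidden-subgraph tools eliminates it. This configuration is exactly how the bicyclic Graphs (t), (u) and (v) in the conclusion arise; they are not ``$C_{4}^{2}$ carrying pendant paths,'' so your final classification cannot produce them. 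The paper handles this by introducing the set $A$ of external neighbours of $C_{4}^{2}$ and splitting on whether the induced graph $\langle A\rangle$ contains an edge: the edgeless case yields the unicyclic graphs (o), (p), (q), (r), (s), ($\text{G}_{t}$), ($\text{G}_{t}^{t+m}$), while the one-edge case yields (t), (u), (v). Your argument silently collapses the second case into the first.

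A second, smaller omission: forbidden induced subgraphs alone do not finish the job. Two configurations survive that analysis --- the Graphs ($\text{j}_{3}$) and ($\text{t}_{4}$) of Figure \ref{j3} --- and the paper eliminates them only because each has an eigenvalue of multiplicity $2$, whereas all eigenvalues of $X$ are simple. Your proposal uses the multiplicity argument once (to get $H\neq C_{4}^{2}$) but never again, so even after repairing the unicyclic assumption you would still list spurious components. You should also note that the hypothesis that $C_{6}^{2}$ is not induced is needed not only to block hexagonal attachments but, together with Lemma \ref{8induced-type2} and Lemma \ref{induced-type1}, to guarantee at the outset that \emph{every} induced cycle of $H$ is a $C_{4}^{2}$, which is what makes the whole attachment analysis around a single quadrilateral legitimate.
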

\begin{proof}
	By Lemmas \ref{induced} and \ref{induced-type1}, we know that all induced cycles of $H$ are of Type 2 and even order. Moreover, by Lemma \ref{8induced-type2}, all induced cycles of $H$ are $C_{4}^{2}$. Since $C_{4}^{2}$ has no simple eigenvalue, we have $H\not=C_{4}^{2}$. Let $A$ be the set of vertices in $V(H)\backslash V(C_{4}^{2})$ which are adjacent to at least one vertex in $C_{4}^{2}$. We have $1\leq|A|\leq4$, and each vertex in $A$ is adjacent to exactly one vertex in $C_{4}^{2}$, because otherwise a mixed cycle of order 3 or a mixed graph with the underlying graph $\theta_{3,3,3}$ is an induced subgraph of $H$, a contradiction. In the induced graph $\langle A \rangle$, no two edges are adjacent, because otherwise $H$ contains an induced cycle of order 3, 5 or 6, a contradiction. Now, assume that $\langle A \rangle $ has two disjoint edges. Then, either a mixed graph with the underlying Graph (n) or a mixed cycle of order 5 is an induced subgraph of $H$, a contradiction. Therefore, suppose that $\langle A\rangle$ has at most one edge. Assume that $\langle A \rangle $ has no edge. We show that all vertices in $V(H)\backslash V(C_{4}^{2})$ have degree at most 2. By contradiction, let $v$ be a vertex in $V(H)\backslash V(C_{4}^{2})$ of degree 3 which has minimum distance from $C_{4}^{2}$. If $v\in A$, then since $\langle A\rangle$ has no edge and $C_{4}^2$ is the only induced cycle of $H$, the Graph (e) of order 6 is an induced subgraph of $H$, a contradiction. Similarly, if $v\not\in A$, we reach a contradiction. Since $H$ has no induced cycle on more than 4 vertices, $H$ is a mixed unicyclic graph. So, the Graphs (o), (p), (q), (r), (s), ($ \text{G}_{t} $), ($ \text{G}_{t}^{t+m} $) and the Graph ($\text{j}_{3}$) in Figure \ref{j3} are the only possibilities for $H$, because otherwise one of the Graphs (a), (c), (j), ($\text{j}_{1}$) or ($\text{j}_{2}$) is an induced subgraph of $H$, a contradiction. Moreover, the Graph ($\text{j}_{3}$) has an eigenvalue of multiplicity 2, so $H$ is not the Graph ($\text{j}_{3}$).
	
	Finally, we can assume that $\langle A \rangle $ has exactly one edge. Let $v_{1}$ and $v_{2}$ be two adjacent vertices in $A$. For $i=1,2$, let $u_{i}$ be the vertex in $C_{4}^{2}$ adjacent to $v_{i}$. Then, $u_{1}$ and $u_{2}$ are adjacent, because otherwise $H$ has an induced mixed cycle of order 5, a contradiction. Now, we show that all vertices in $V(H)\backslash (V(C_{4}^{2})\cup\{v_{1},v_{2}\})$ have degree at most 2. By contradiction, assume there exists a vertex in $V(H)\backslash (V(C_{4}^{2})\cup\{v_{1},v_{2}\})$ of degree 3. Since $\langle A\rangle$ has exactly one edge, $C_{4}^2$ is the only induced cycle of $H$ and a mixed graph with the underlying Graph (n) is $(-2,2)$-out, one can see that one of the family of the Graph (e) is an induced subgraph of $H$, a contradiction. Also, since a mixed graph with the underlying Graph (n) is $(-2,2)$-out and $C_{4}^2$ is the only induced cycle of $H$, $ H $ is the Graph ($ \text{t}_{5} $) in Figure \ref{j3}. Now, since the Graphs (e), ($\text{t}_{1}$), ($\text{t}_{2}$) and ($\text{t}_{3}$) are $(-2,2)$-out, $H$ can only be one of the Graphs (t), (u), (v) or the Graph ($\text{t}_{4}$) in Figure \ref{j3}. On the other hand, the Graph ($\text{t}_{4}$) has an eigenvalue of multiplicity 2, so $H$ is not the Graph ($\text{t}_{4}$), and the proof is complete.
		\begin{figure}
			\centering
			\includegraphics[height=2cm]{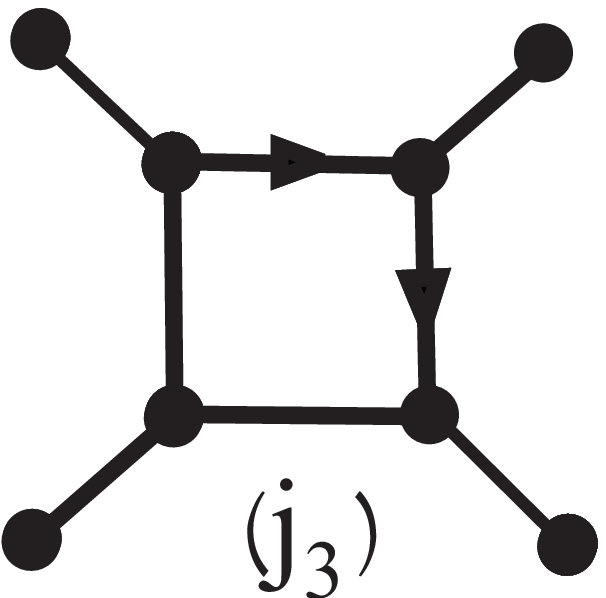}
			\qquad\qquad\qquad			\includegraphics[height=1.8cm]{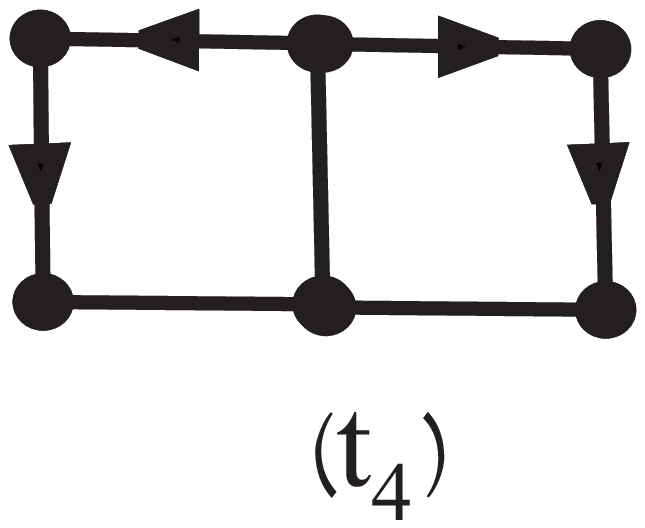}
			$$\includegraphics[height=2cm]{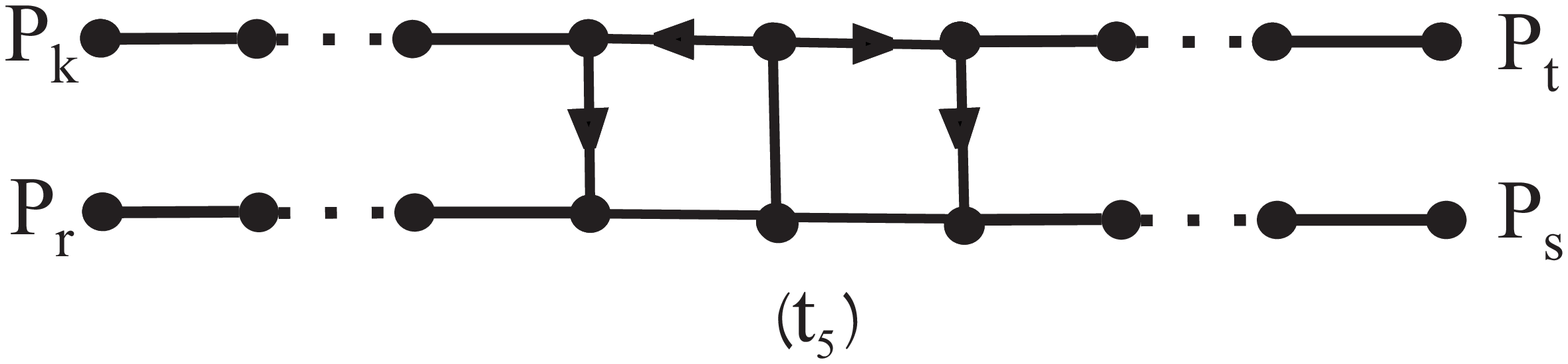}$$
			\caption{The Graphs $(\text{j}_{3})$, $ (\text{t}_{4}) $ and $ (\text{t}_{5}) $.}
			\label{j3}
		\end{figure}
\end{proof}
Now, using these lemmas, we list all possible components of $X$ in Figure \ref{acceptable}.
\subsection{Paths of Even Orders}
If $X$ is a mixed cospectral mate of $P_{n}$, where $n$ is an even integer, then by the spectrum of the admissible graphs in Appendix A, the Graphs (g), (p), (r), (t), ($\text{D}_{t}$) and (w) cannot be a component of $X$ since they all have a zero eigenvalue. Furthermore, the Graphs (q), (s), (k), (v), ($\text{G}_{t}$), ($\text{G}_{t}^{t+m}$), (y), (z) and $C_{t}^{1}$ all have an eigenvalue of the form $2\cos(\dfrac{p}{q}\pi)$, where $\gcd(p,q)=1$ and $q$ is even, which cannot be an eigenvalue of $X$. Therefore, only the Graphs (o), (u), (h) and even paths can be components of $X$. Moreover, two Graphs (h) and (u) are cospectral, so at most one of them can be a component of $X$. We remind that $X$ has at least two components. Now, we examine all possible cases for the number of components of $X$.
\begin{lemma}
	\label{2comp}
	If $X$ has two components, then $n=8$, and $X$ is the disjoint union of $P_{2}$ and the Graph (o).
\end{lemma}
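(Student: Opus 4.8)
The plan is to pin down the two components of $X$ by combining the order–size balance with the classical path spectrum. By the discussion preceding the lemma, every component of $X$ is one of the Graphs (o), (u), (h) or an even path. A path $P_{2a}$ has one fewer edge than vertex, whereas each of the unicyclic Graphs (o), (u), (h) has as many edges as vertices. Writing $t$ for the number of path components and $u$ for the number of unicyclic components, and using that $X$ has order $n$ and size $n-1$, I would compute $n-(n-1)=t$, so $t=1$. With exactly two components this forces $u=1$. Hence $X$ is the disjoint union of a path $P_{2a}$ (with $a\geq 1$) and one graph $U$, where $U$ is one of the Graphs (o), (u), (h), and $n=2a+|U|$.

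Next I would exploit cospectrality with $P_n$. Recall $\text{Spec}_{H}(P_m)=\{2\cos\frac{k\pi}{m+1}: k=1,\ldots,m\}$. Since $\text{Spec}_{H}(X)=\text{Spec}_{H}(P_n)$ and $P_{2a}$ is a component of $X$, every eigenvalue of $P_{2a}$ is an eigenvalue of $P_n$; in particular $\lambda_{1}(P_{2a})=2\cos\frac{\pi}{2a+1}$ equals $2\cos\frac{k\pi}{n+1}$ for some $k$. This yields $n+1=k(2a+1)$, that is $(2a+1)\mid(n+1)$. Because $n+1=(2a+1)+|U|$, this divisibility is equivalent to $(2a+1)\mid|U|$, which is the key numerical constraint driving the whole argument.

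Finally I would split on the order of $U$. The Graphs (h) and (u) are cospectral, hence share the order $8$, while (o) has order $6$. If $U$ were (h) or (u), then $(2a+1)\mid 8$ with $2a+1$ an odd integer $\geq 3$, which is impossible, so this case is ruled out. If $U=(\text{o})$, then $(2a+1)\mid 6$ with $2a+1$ odd and $\geq 3$ forces $2a+1=3$, i.e. $a=1$ and the path component is $P_2$; consequently $n=2+6=8$ and $X$ is the disjoint union of $P_2$ and the Graph (o). To close the argument I would confirm, using the spectra tabulated in Appendix A, that $\text{Spec}_{H}(P_{2})\cup\text{Spec}_{H}((\text{o}))=\text{Spec}_{H}(P_{8})$, so this configuration is genuinely a cospectral mate and the identification is valid. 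I do not expect a serious obstacle here: the divisibility step does the decisive work, and the only external input needed is the orders of (o), (u), (h) and this one final spectral verification, both of which are already available from the earlier lemmas and the appendix.
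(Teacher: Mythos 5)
Your proof is correct, and it reaches the conclusion by a slightly different mechanism than the paper. The paper's proof asserts at once that the two components must be $P_{n-6}$ and the Graph (o), and then pins down $n$ by matching the largest eigenvalue of the whole graph with the largest eigenvalue of (o): since $\lambda_{1}(P_{n-6})<\lambda_{1}(P_{n})$, one must have $2\cos\frac{\pi}{9}=2\cos\frac{\pi}{n+1}$, whence $n=8$. You instead derive the ``one path plus one unicyclic component'' decomposition from the order--size count, and then extract the divisibility condition $(2a+1)\mid(n+1)$, equivalently $(2a+1)\mid|U|$, from the requirement that $\lambda_{1}(P_{2a})=2\cos\frac{\pi}{2a+1}$ lie in $\mathrm{Spec}_{H}(P_{n})$. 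This is a genuinely different pivot: your constraint lives on the path component rather than on the unicyclic one, and it disposes of the candidates (u) and (h) (order $8$, no odd divisor $\geq 3$) in the same stroke that forces $U=(\text{o})$ and $P_{2a}=P_{2}$. That is a small but real gain in completeness, since the paper's opening assertion silently discards the alternative $P_{n-8}\cup(\text{u})$ or $P_{n-8}\cup(\text{h})$, which with the paper's own method would require the extra observation that $\mathrm{Spec}_{H}(P_{6})\not\subseteq\mathrm{Spec}_{H}(P_{14})$. Both arguments end with the same direct verification that $\mathrm{Spec}_{H}(P_{2})\cup\mathrm{Spec}_{H}(\text{o})=\mathrm{Spec}_{H}(P_{8})$, which is indeed needed and which you correctly include.
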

\begin{proof}
	Since $X$ has exactly two components and it has order $n$ and size $n-1$, these components should be $P_{n-6}$ and the Graph (o). Obviously, $\lambda_{1}(P_{n-6})<\lambda_{1}(P_{n})$, so $\lambda_{1}(P_{n})$ should be the largest eigenvalue of the Graph (o). Therefore, we have $2\cos\dfrac{\pi}{9}=2\cos\dfrac{\pi}{n+1}$. This implies that $n=8$, and one can easily see that the disjoint union of $P_{2}$ and the Graph (o) is a cospectral mate of $P_{8}$. The proof is complete.
\end{proof}
\begin{lemma}
	If $X$ has exactly three components, then $n=14$, and $X$ is the disjoint union of either $P_{2}$, $P_{4}$ and the Graph (u), or $P_{2}$, $P_{4}$ and the Graph (h).
\end{lemma}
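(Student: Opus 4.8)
The plan is to combine a corank (independent-cycle) count with the simplicity of the eigenvalues of $X$ to determine the component types, and then use the spectral radius to fix $n$. Since $n$ is even, the discussion preceding Lemma \ref{2comp} shows that every component of $X$ is an even path or one of the Graphs (o), (u), (h), with at most one of (u), (h) occurring. As $X$ has order $n$, size $n-1$ and exactly three components, its corank is $s(X) = (n-1)-n+3 = 2$, and corank is additive over components. An even path has corank $0$, the Graph (o) is unicyclic so has corank $1$, and each of (u), (h) has order $8$ and size $9$, hence corank $2$. Therefore the three component coranks must sum to $2$, leaving only the possibilities $\{2,0,0\}$ and $\{1,1,0\}$.

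First I would discard the case $\{1,1,0\}$: it forces two copies of the Graph (o), in which case every eigenvalue of (o) would occur with multiplicity at least $2$ in $X$, contradicting the fact that all eigenvalues of $X$ are simple. Hence the components are exactly one of (u) or (h), say $H$, together with two even paths $P_a$ and $P_b$ with $a,b\geq 2$ and $a+b = n-8$.

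Next I would pin down $n$ using $\lambda_1$. Because $H$ uses $8$ vertices, both $a$ and $b$ are less than $n$, so $\lambda_1(P_a)=2\cos\frac{\pi}{a+1}$ and $\lambda_1(P_b)=2\cos\frac{\pi}{b+1}$ are each strictly smaller than $\lambda_1(P_n)=2\cos\frac{\pi}{n+1}$. Since $\lambda_1(X)=\lambda_1(P_n)$ is attained on some component, it must be attained on $H$; reading $\lambda_1(H)=2\cos\frac{\pi}{15}$ from the spectra in Appendix A yields $2\cos\frac{\pi}{n+1}=2\cos\frac{\pi}{15}$, so $n=14$. Then $a+b=6$ with $a,b$ even and at least $2$ forces $\{a,b\}=\{2,4\}$, giving $X = H \cup P_2 \cup P_4$ with $H\in\{(\mathrm{u}),(\mathrm{h})\}$.

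Finally I would check that this is genuinely a cospectral mate, i.e.\ that $\text{Spec}_{H}(H)\cup\text{Spec}_{H}(P_2)\cup\text{Spec}_{H}(P_4)=\text{Spec}_{H}(P_{14})$. Writing each eigenvalue as $2\cos\frac{k\pi}{15}$, the eigenvalues of $P_2$ correspond to $k\in\{5,10\}$ and those of $P_4$ to $k\in\{3,6,9,12\}$, while Appendix A gives the eigenvalues of $H$ as those with $k\in\{1,2,4,7,8,11,13,14\}$; the union is $\{2\cos\frac{k\pi}{15}:1\leq k\leq 14\}=\text{Spec}_{H}(P_{14})$. Since (u) and (h) are cospectral, both choices of $H$ work, matching the two cases in the statement. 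I expect the main obstacle to be the corank bookkeeping together with the simplicity argument ruling out the double-(o) configuration; once the component list is reduced to $H\cup P_a\cup P_b$, fixing $n=14$ and verifying the union of spectra are routine.
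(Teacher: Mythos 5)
Your proof is correct and follows essentially the same route as the paper: restrict the components to even paths, (o), (u), (h), force the component multiset from the order/size constraint, locate $\lambda_1(P_n)$ on the non-path component to get $n=14$, and verify the spectra match. The only difference is presentational — your corank bookkeeping and the simplicity argument against two copies of (o) make explicit the step the paper dismisses with ``one can see.''
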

\begin{proof}
	Since $X$ has three components, one can see that these components are either two even paths and the Graph (h), or two even paths and the Graph (u). In each case, the largest eigenvalue of $X$ cannot be an eigenvalue of a shorter path, so $2\cos\dfrac{\pi}{15}=2\cos\dfrac{\pi}{n+1}$. Therefore, $n=14$, and one can see that the disjoint union of either $P_{2}$, $P_{4}$ and the Graph (u), or $P_{2}$, $P_{4}$ and the Graph (h) is a cospectral mate of $P_{14}$.
\end{proof}
Finally, we obtain an upper bound for the number of components of $X$.
\begin{lemma}
	The mixed graph $X$ has at most three components.
\end{lemma}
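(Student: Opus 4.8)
The discussion preceding this lemma has already cut the admissible components of $X$ down to even paths together with the Graphs (o), (u) and (h), and we know moreover that every eigenvalue of $X$ is simple and lies in $(-2,2)$. The plan is to locate the largest eigenvalue of $X$, use it to pin down $n$, and then finish by a short order count in which the simplicity of the spectrum does the decisive work.

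First I would exploit that $\lambda_{1}(X)=\lambda_{1}(P_{n})=2\cos\frac{\pi}{n+1}$ is a \emph{simple} eigenvalue, so it is the largest eigenvalue of exactly one component $H_{0}$ of $X$. This $H_{0}$ cannot be a proper even path: for $2t<n$ one has $\lambda_{1}(P_{2t})=2\cos\frac{\pi}{2t+1}<2\cos\frac{\pi}{n+1}$ by strict monotonicity of $m\mapsto 2\cos\frac{\pi}{m+1}$, while $H_{0}=P_{n}$ would force $X=P_{n}$ to be connected, contradicting that $X$ has at least two components. Hence $H_{0}$ is one of (o), (u), (h). Recalling from Lemma \ref{2comp} that $\lambda_{1}(\text{(o)})=2\cos\frac{\pi}{9}$ and from the three-component lemma that $\lambda_{1}(\text{(u)})=\lambda_{1}(\text{(h)})=2\cos\frac{\pi}{15}$, the equality $\lambda_{1}(H_{0})=2\cos\frac{\pi}{n+1}$ forces $n=8$ when $H_{0}$ is (o) and $n=14$ when $H_{0}$ is (u) or (h).

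Next I would close by counting orders, using that each component has order at least $2$. If $n=8$, then $H_{0}=\text{(o)}$ has order $6$, so the remaining components have total order $2$ and can only be a single $P_{2}$; thus $X$ has two components. If $n=14$, then $H_{0}$ is (u) or (h), of order $8$, and the remaining components have total order $6$; there are therefore at most three of them, but three components of total order $6$ would all be copies of $P_{2}$, producing the eigenvalue $1=2\cos\frac{\pi}{3}$ with multiplicity three and contradicting simplicity. Hence at most two components remain, so $X$ has at most three components in total.

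The only delicate points are the identification of $H_{0}$ (where monotonicity of $2\cos\frac{\pi}{m+1}$ eliminates all proper paths) and the final appeal to simplicity, which is exactly what kills the would-be fourth component $\text{(u)}\cup P_{2}\cup P_{2}\cup P_{2}$; the rest is order arithmetic. As a cross-check one may count the corank instead: from the two previous lemmas (o) has order $6$ and size $6$, while (u) and (h) have order $8$ and size $9$, so (o) is unicyclic and (u), (h) are bicyclic, whereas paths are trees. Writing $a,b,p$ for the numbers of (o)-, (u)/(h)- and path-components, we get $a+2b=m-n+c=c-1$, hence $p=b+1$ and $c=a+2b+1$; since $a\le1$ and $b\le1$ by simplicity (using that (u) and (h) are cospectral), this gives $c\le4$, and the case $a=b=1$ is excluded because it would require order at least $6+8+4=18>14$, leaving $c\le3$.
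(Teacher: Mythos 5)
Your proof is correct, and all the facts it relies on (the restriction of the possible components to even paths, (o), (u), (h); the orders $6$, $8$, $8$ of the latter; their largest eigenvalues $2\cos\frac{\pi}{9}$ and $2\cos\frac{\pi}{15}$; simplicity of the spectrum) are available at this point in the paper. Your organization, however, differs from the paper's. The paper argues by contradiction: assuming more than three components, it uses the size count $|E(X)|=n-1$ (equivalently, the corank bookkeeping you relegate to your ``cross-check'') together with the fact that at most one (o) and at most one of the cospectral pair (u), (h) may occur, to force the unique candidate decomposition into two even paths plus (o) plus one of (u), (h); it then gets $n=14$ from the largest eigenvalue and kills this configuration by an order count. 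You instead argue directly: you first identify the component $H_{0}$ realizing the simple eigenvalue $\lambda_{1}(P_{n})$, rule out paths by monotonicity of $2\cos\frac{\pi}{m+1}$, deduce $n\in\{8,14\}$, and finish with an order count in which simplicity excludes three copies of $P_{2}$. Your route has the advantage of simultaneously recovering the conclusions of the two preceding lemmas (the exact cospectral mates of $P_{8}$ and $P_{14}$) and of avoiding the corank argument entirely; the paper's route is shorter given that those two lemmas have already been proved, since it only needs to dispose of the hypothetical fourth component. Your closing corank computation is, in essence, exactly the paper's proof, so the two arguments reinforce each other.
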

\begin{proof}
	By contradiction, assume that $X$ has more than three components. Since $X$ has size $n-1$, it is the disjoint union of either two even paths and two Graphs (o) and (h), or two even paths and two Graphs (o) and (u). In each case, since the Graphs (h) and (u) have an eigenvalue larger than the maximum eigenvalue of (o), we conclude that $2\cos\dfrac{\pi}{15}=2\cos\dfrac{\pi}{n+1}$, and therefore $n=14$. Because of the order of the Graphs (o), (h) and (u), $X$ has no path components, a contradiction.
\end{proof}
Now, the following theorem is an immediate consequence of three previous lemmas.
\begin{theorem}\label{finalevenorder}
	Let $n$ be an even positive integer. Then $P_{n}$ is DHS if and only if $n\not\in\{8,14\}$.
\end{theorem}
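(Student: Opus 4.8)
The plan is to assemble the three preceding lemmas, reducing the whole statement to a count of the number of components of a cospectral mate. First I would record the reduction set up at the start of this section. If $X$ is a mixed cospectral mate of $P_{n}$ that is \emph{connected}, then (having order $n$ and size $n-1$) its underlying graph is a tree $T$; by Theorem \ref{forest} we have $\mathrm{Spec}_{H}(X)=\mathrm{Spec}(T)$, so $\mathrm{Spec}(T)=\mathrm{Spec}_{H}(P_{n})=\mathrm{Spec}(P_{n})$, and since $P_{n}$ is determined by its spectrum among simple graphs, $T=P_{n}$. Then, again by Theorem \ref{forest}, $X$ is switching equivalent to $P_{n}$. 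Since switching is realized by diagonal conjugation it preserves the underlying graph, hence the number of components; in particular a disconnected mixed graph can never be switching equivalent to the connected $P_{n}$. Therefore $P_{n}$ fails to be DHS \emph{precisely} when it admits a disconnected cospectral mate, and the theorem becomes the determination of exactly which even $n$ admit such a mate.

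For the direction $n\notin\{8,14\}\Rightarrow P_{n}$ is DHS, I would show no disconnected cospectral mate can exist. Suppose $X$ is a disconnected cospectral mate of $P_{n}$. By the reduction above it has at least two components, and by the lemma bounding the number of components it has at most three. If $X$ has exactly two components, Lemma \ref{2comp} forces $n=8$; if $X$ has exactly three components, the lemma characterizing three-component mates forces $n=14$. Hence a disconnected cospectral mate exists only for $n\in\{8,14\}$, so for every other even $n$ every cospectral mate is connected and thus switching equivalent to $P_{n}$, i.e.\ $P_{n}$ is DHS.

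For the converse I would argue the contrapositive by exhibiting explicit disconnected mates in the two exceptional cases. For $n=8$, Lemma \ref{2comp} produces the disjoint union of $P_{2}$ and the Graph (o) as a cospectral mate of $P_{8}$; for $n=14$, the three-component lemma produces the disjoint union of $P_{2}$, $P_{4}$ and the Graph (u) (equivalently, the Graph (h)) as a cospectral mate of $P_{14}$. Each of these has more than one component, so by the invariance of the component count under switching it is not switching equivalent to the connected $P_{n}$. Thus $P_{8}$ and $P_{14}$ are not DHS, completing both directions.

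I do not expect a genuinely hard step to remain: all of the combinatorial content lives in the three lemmas, which identify the admissible components and pin down the forced value of $n$ for each component count. The only points I would state carefully are that switching preserves the number of components and that the disjoint unions named in Lemma \ref{2comp} and in the three-component lemma are verified there to be honest cospectral mates. The real obstacle was overcome earlier, namely eliminating the many candidate components in Figure \ref{acceptable} by their spectra so that only the Graphs (o), (u), (h) and even paths survive; granted that, the present theorem is a short bookkeeping argument over the possible number of components.
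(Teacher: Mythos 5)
Your proposal is correct and follows the paper's route exactly: the paper states the theorem as an immediate consequence of the three preceding lemmas (two components forces $n=8$, three components forces $n=14$, and at most three components are possible), with the reduction to disconnected mates already set up via Theorem \ref{forest} and the spectral determination of $P_{n}$ as a simple graph. You merely make explicit the bookkeeping the paper leaves implicit, including the observation that switching preserves the underlying graph and hence the component count.
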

\newpage
\subsection{Paths of Odd Orders}
We consider two cases:
\begin{enumerate}
	\item [Case 1.] $n\equiv1\pmod4$:
	
	One can see that $P_{4k+1}$ is a cospectral mate of the disjoint union of $P_{2k}$ and $C_{2k+1}^{1}$, where $k$ is a positive integer. So, we have the following result.
	\begin{theorem}
		For every positive integer $k$, $P_{4k+1}$ is not DHS.
	\end{theorem}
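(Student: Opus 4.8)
The plan is to exhibit, for each $k\geq1$, a mixed graph that is cospectral with $P_{4k+1}$ but not switching equivalent to it; this is exactly what is needed to conclude that $P_{4k+1}$ is not DHS. The natural candidate is the one flagged immediately before the statement, namely the disjoint union $Y=P_{2k}\,\sqcup\,C_{2k+1}^{1}$. As a first sanity check, $Y$ has order $2k+(2k+1)=4k+1$ and size $(2k-1)+(2k+1)=4k$, which agree with the order and size of $P_{4k+1}$, so there is no obstruction from the fact (\cite{guom}) that cospectral mates share order and size.

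The core of the argument is a direct eigenvalue computation using formulas already established. Writing every eigenvalue over the common denominator $4k+2$, the path spectrum gives
\[
\text{Spec}_{H}(P_{4k+1})=\Bigl\{\,2\cos\tfrac{m\pi}{4k+2}\ :\ m=1,\ldots,4k+1\,\Bigr\}.
\]
For $Y$ I would use $\text{Spec}_{H}(P_{2k})=\{2\cos\frac{j\pi}{2k+1}:j=1,\ldots,2k\}=\{2\cos\frac{2j\pi}{4k+2}:j=1,\ldots,2k\}$, which supplies precisely the values with even numerator $m\in\{2,4,\ldots,4k\}$, together with the corollary following Theorem~\ref{spectype1}, which gives $\text{Spec}_{H}(C_{2k+1}^{1})=\{2\cos\frac{(2j+1)\pi}{4k+2}:j=0,\ldots,2k\}$, supplying precisely the values with odd numerator $m\in\{1,3,\ldots,4k+1\}$. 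Since the even and the odd numerators in $\{1,\ldots,4k+1\}$ are disjoint and together exhaust this range, the multiset union of these two spectra is exactly $\text{Spec}_{H}(P_{4k+1})$, so $Y$ is a cospectral mate of $P_{4k+1}$.

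It then remains to see that $Y$ is not switching equivalent to $P_{4k+1}$. Switching merely conjugates the Hermitian adjacency matrix by a unit diagonal matrix $D(\theta)$, so it preserves the off-diagonal zero/nonzero pattern and hence the underlying graph. The underlying graph of $Y$ is $P_{2k}\sqcup C_{2k+1}$, which is disconnected and contains a cycle, so it is not isomorphic to the connected, acyclic underlying graph of $P_{4k+1}$; therefore $Y$ cannot be obtained from $P_{4k+1}$ by switching. Consequently $P_{4k+1}$ admits a cospectral mate not reachable by switching, i.e.\ it is not DHS. There is no genuine obstacle in this argument: the only care needed is the index-range bookkeeping in the cospectrality computation (ensuring the even/odd numerators match up with no value double-counted, which is immediate since all multiplicities are one) and the standard observation that switching fixes the underlying graph.
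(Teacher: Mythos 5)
Your proposal is correct and follows exactly the route the paper intends: the paper simply asserts, just before the theorem statement, that $P_{2k}\cup C_{2k+1}^{1}$ is a cospectral mate of $P_{4k+1}$, and you have supplied the even/odd numerator computation over the common denominator $4k+2$ together with the (standard) observation that switching preserves the underlying graph. No gaps; this is the paper's argument with the details written out.
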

	
	Now, we express all mixed cospectral mates of the path $ P_{4k+1} $. Recall that
	\begin{equation*}
	\text{Spec}_{H}(P_{4k+1})=\{2 \cos \dfrac{j}{4k+2}\pi; \quad j=1,2, \ldots , 4k+1\}.
	\end{equation*}
	Let $ X $ be a mixed cospectral mate of $ P_{4k+1} $. So, the eigenvalues of $ X $ are of the form $ 2 \cos(\dfrac{p}{q}\pi) $, where $ \gcd(p,q)=1 $ and $ 4 \nmid q $. Now, by the spectrum of the graphs given in Figure \ref{acceptable}, one can find that the only possible components of $X$ are the Graphs (o), (p), (t), (u), (v), (h), ($ \text{G}_{t}^{t+m} $), (w), (z), ($\text{D}_{l} $), paths and odd cycles of Type 1, where $m$ and $l$ are even and $t$ is odd. Note that if $m$ and $l$ are even and $t$ is odd, then two Graphs ($ \text{G}_{t}^{t+m} $) and ($ \text{D}_{l} $) have a zero eigenvalue of multiplicity 2. So, these two graphs cannot be a component of $ X $.
	
	Among the Graphs (o), (p), (t), (u), (v), (h), (w) and (z), the largest eigenvalue is $2\cos\dfrac{\pi}{30}$. Thus, if $ 4k+1 > 29 $, then $ C^{1}_{2k+1} $ is a component of $X$. Since $ 2k \geq 16 $, by Theorem \ref{finalevenorder}, $P_{2k}$ is DHS. Therefore, since $P_{2k}\cup C_{2k+1}^{1}$ is a cospectral mate of $P_{4k+1}$, we can state the following theorem:
	\begin{theorem}
		If $ k\geq8  $, then the only mixed cospectral mate of $ P_{4k+1} $ is $ P_{2k} \cup C^{1}_{2k+1}$.
	\end{theorem}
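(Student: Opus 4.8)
The plan is to pin down $X$ through its largest eigenvalue, using the list of admissible components established just above, and then to reduce the problem to the already–settled even case. First I would record that, since $X$ is cospectral with $P_{4k+1}$, its largest eigenvalue is $\lambda_{1}(X)=2\cos\frac{\pi}{4k+2}$, and that $k\geq8$ forces $4k+2>30$, so $\lambda_{1}(X)>2\cos\frac{\pi}{30}$. As noted immediately before the statement, each of the bounded admissible components (o), (p), (t), (u), (v), (h), (w) and (z) has largest eigenvalue at most $2\cos\frac{\pi}{30}$, so none of them can carry $\lambda_{1}(X)$. A path component $P_{m}$ has index $2\cos\frac{\pi}{m+1}<2\cos\frac{\pi}{4k+2}$ whenever $m<4k+1$, and a single path of order $4k+1$ would make $X$ connected, contradicting that $X$ has at least two components. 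Hence the component of $X$ realizing $\lambda_{1}(X)$ must be an odd cycle of Type $1$.

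Next I would identify that cycle. By the corollary to Theorem \ref{spectype1}, the largest eigenvalue of $C^{1}_{m}$ is $2\cos\frac{\pi}{2m}$, which is strictly increasing in $m$; equating it with $2\cos\frac{\pi}{4k+2}$ forces $2m=4k+2$, that is $m=2k+1$. Since every eigenvalue of $P_{4k+1}$ is simple, $\lambda_{1}(X)$ is simple, so exactly one component of $X$ equals $C^{1}_{2k+1}$ (two copies would already exceed the order $4k+1$, as $2(2k+1)=4k+2$).

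I would then peel this component off and reduce. Writing $X=C^{1}_{2k+1}\cup X'$, the spectrum of $X'$ is obtained by deleting $\text{Spec}_{H}(C^{1}_{2k+1})$ from $\text{Spec}_{H}(P_{4k+1})$. By the corollary, the eigenvalues of $C^{1}_{2k+1}$ are exactly the values $2\cos\frac{j}{4k+2}\pi$ with $j$ odd, $1\leq j\leq 4k+1$, so the leftover eigenvalues are the ones with $j$ even, namely $2\cos\frac{2i}{4k+2}\pi=2\cos\frac{i}{2k+1}\pi$ for $i=1,\ldots,2k$. This is precisely $\text{Spec}_{H}(P_{2k})$, so $X'$ is a mixed cospectral mate of $P_{2k}$.

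Finally, because $k\geq8$ gives $2k\geq16\notin\{8,14\}$, Theorem \ref{finalevenorder} shows that $P_{2k}$ is DHS, whence $X'$ is switching equivalent to $P_{2k}$. Therefore $X$ is switching equivalent to $P_{2k}\cup C^{1}_{2k+1}$, as claimed. The argument is in essence a reduction, so there is no single hard obstacle; the only delicate points are the exact spectral bookkeeping that makes the leftover spectrum equal $\text{Spec}_{H}(P_{2k})$ and the verification that the hypothesis $2k\notin\{8,14\}$ of Theorem \ref{finalevenorder} is met under $k\geq8$, both of which become routine once the dominant cycle $C^{1}_{2k+1}$ has been extracted.
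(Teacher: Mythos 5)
Your proposal is correct and follows essentially the same route as the paper: the paper's argument (given in the paragraph preceding the theorem) likewise observes that for $k\geq 8$ the index $2\cos\frac{\pi}{4k+2}$ exceeds $2\cos\frac{\pi}{30}$, the largest eigenvalue among the bounded admissible components, forcing $C^{1}_{2k+1}$ to be a component, and then invokes Theorem \ref{finalevenorder} on the remaining part, which is cospectral with $P_{2k}$. Your write-up merely makes explicit a few steps the paper leaves implicit (why a path cannot carry the index, and the bookkeeping showing the leftover spectrum is exactly $\text{Spec}_{H}(P_{2k})$).
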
 
	Now, by the spectrum of the graphs given in Figure \ref{acceptable}, the class of all mixed cospectral mates of $P_{4k+1}$ is as follows:
	
		\{$ P_{5},C^{1}_{3} \cup P_{2}$\},\quad\{$ P_{9} , C^{1}_{5} \cup P_{4} $\},\quad\{$ P_{13} , C^{1}_{7} \cup P_{6} , (\text{p}) \cup P_{6} $\},\\ \{$ P_{17} , C^{1}_{9} \cup P_{8} , C^{1}_{9} \cup P_{2} \cup (\text{o}), (\text{v}) \cup P_{8} \cup P_{1}, (\text{v}) \cup (\text{o}) \cup P_{2} \cup P_{1} $\},\\ \{$ P_{21} , C^{1}_{11} \cup P_{10} $\},\quad \{$ P_{25} ,C^{1}_{13} \cup P_{12} $\},\\ \{$	P_{29},C^{1}_{15} \cup P_{14},C^{1}_{15} \cup P_{2} \cup P_{4} \cup (\text{u}), C^{1}_{15} \cup P_{2} \cup P_{4} \cup (\text{h}), (\text{z}) \cup P_{14} \cup (\text{t}),\\(\text{z}) \cup P_{2} \cup P_{4} \cup (\text{u}) \cup (\text{t}), (\text{z}) \cup P_{2} \cup P_{4} \cup (\text{h}) \cup (\text{t})$\}.
	\item [Case 2.] $n\equiv3\pmod4$:
	
	Now, we investigate the mixed cospectral mates of $P_{4k+3}$.
	\begin{theorem}
		For every positive integer $k$, $P_{4k+3}$ is not DHS.
	\end{theorem}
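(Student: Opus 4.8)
The plan is to exhibit an explicit cospectral mate of $P_{4k+3}$ whose underlying graph is not a path, which instantly rules out its being obtained from $P_{4k+3}$ by switching, since switching preserves the underlying graph. Guided by the $P_{4k+1}$ case, where $P_{2k}\cup C^{1}_{2k+1}$ played this role, I would take the candidate $Y=P_{2k+1}\cup C^{1}_{2k+2}$. A first sanity check is that $Y$ has the correct order and size: $P_{2k+1}$ contributes order $2k+1$ and size $2k$, while $C^{1}_{2k+2}$ contributes order $2k+2$ and size $2k+2$, giving total order $4k+3$ and size $4k+2$, exactly those of $P_{4k+3}$.

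The heart of the argument is to verify $\text{Spec}_{H}(Y)=\text{Spec}_{H}(P_{4k+3})$. Recall that $\text{Spec}_{H}(P_{4k+3})=\{2\cos\frac{j\pi}{4k+4}: j=1,\ldots,4k+3\}$, and I would split these eigenvalues according to the parity of $j$. For even $j=2l$ with $l=1,\ldots,2k+1$ one has $2\cos\frac{2l\pi}{4k+4}=2\cos\frac{l\pi}{2k+2}$, which is precisely $\text{Spec}_{H}(P_{2k+1})$. For odd $j=2m+1$ with $m=0,\ldots,2k+1$ one has $2\cos\frac{(2m+1)\pi}{4k+4}=2\cos\frac{(2m+1)\pi}{2(2k+2)}$, which, by the formula for the spectrum of $C^{1}_{n}$ established via Theorem \ref{spectype1} and its corollary, is precisely $\text{Spec}_{H}(C^{1}_{2k+2})$. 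Since the even and odd values of $j$ partition $\{1,\ldots,4k+3\}$, the multiset union of these two spectra is exactly $\text{Spec}_{H}(P_{4k+3})$, so $Y$ and $P_{4k+3}$ are cospectral.

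To finish, I would note that the underlying graph of $Y$ is disconnected and contains a cycle, whereas $P_{4k+3}$ is a connected tree; as switching cannot alter the underlying graph, $Y$ is not switching equivalent to $P_{4k+3}$. Therefore $P_{4k+3}$ is not DHS. There is no genuine obstacle in this argument; the only point demanding care is aligning the index ranges in the parity split so that every eigenvalue of $P_{4k+3}$ is accounted for exactly once, with the correct multiplicity, across the two components of $Y$. (The abstract promises \emph{two} families of cospectral mates for $P_{4k+3}$, but a single construction such as the one above already suffices to establish that $P_{4k+3}$ fails to be DHS.)
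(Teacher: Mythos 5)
Your proposal is correct and matches the paper's own argument, which exhibits $C^{1}_{2k+2}\cup P_{2k+1}$ as a cospectral mate of $P_{4k+3}$ (the paper states this family only for $k\geq 2$ and uses $G_{2}\cup P_{1}$ for $P_{7}$, but your parity-split verification shows the same construction also covers $k=1$). You supply the explicit spectral computation that the paper leaves as ``one can see,'' so nothing further is needed.
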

	\begin{proof}
		One can see that the disjoint union of the Graphs $\text{G}_{2}$ and $P_{1}$ is a cospectral mate of $P_{7}$. Moreover, for every $k\geq 2$, the disjoint union of the Graphs $\text{G}_{k-1}^{2k}$ and $P_{k}$, as well as the disjoint union of the Graphs $ C^{1}_{2k+2} $ and $ P_{2k+1} $, is a cospectral mate of $P_{4k+3}$.
	\end{proof}
\end{enumerate}
In the next section, we investigate the mixed cospectral mates of mixed cycles.
\section{Spectral Characterization of Mixed Cycles}
In this section, we study the spectral determination problem of mixed cycles.

 Noting to the spectrum of the mixed cycle $ C_{2n} $, one can see that for $ n \geq 3 $, we have
\begin{equation*}
\text{Spec}_{H}(C_{2n})=\text{Spec}_{H}(C_{n} \cup C^{2}_{n}).
\end{equation*}
Also, for every integer $ n>2 $, the spectrum of the mixed cycles $ C^{2}_{2n} $ and $ C_{n}^{1} $ implies that $\text{Spec}_{H}(C^{2}_{2n})=\text{Spec}_{H}(C^{1}_{n} \cup C^{1}_{n})$. So, we have the following theorem.
\begin{theorem}\label{sadezoj}
	 Except $ C_{4} $ and $ C^{2}_{4} $, the mixed cycles $ C_{2n} $ and $ C_{2n}^{2} $ are not DHS, where $ n \geq 2 $.
\end{theorem}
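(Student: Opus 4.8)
The plan is to read the conclusion directly off the two cospectral identities displayed just above the theorem, the only additional ingredient being that switching can never change the underlying graph of a mixed graph. The values $n=2$ give exactly $C_{4}$ and $C_{4}^{2}$, which are the excepted graphs, so nothing is needed for them; the entire argument concerns $n\geq3$.

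First I would record the elementary fact that switching preserves the underlying graph, and in particular the number of connected components. Indeed, if $X$ and $Y$ are switching equivalent, then $H(X)=D(\theta)^{-1}H(Y)D(\theta)$ with $D(\theta)=\mathrm{diag}(\theta(v_{1}),\ldots,\theta(v_{n}))$ and every $\theta(v_{j})\in\{\pm1,\pm i\}$ of modulus one; hence $|H(X)_{ij}|=|H(Y)_{ij}|$ for all $i,j$, so $G(X)=G(Y)$. Consequently a connected mixed graph is never switching equivalent to a graph with two or more components.

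Next I would apply this to each family. For $n\geq3$ the identity $\text{Spec}_{H}(C_{2n})=\text{Spec}_{H}(C_{n}\cup C^{2}_{n})$ exhibits the disconnected graph $C_{n}\cup C^{2}_{n}$ as a cospectral mate of the connected cycle $C_{2n}$ (its order $2n$ and size $2n$ agree with those of $C_{2n}$, consistent with the fact that cospectral mates share order and size). Since the two differ in their number of components, they are not switching equivalent, so $C_{2n}$ is not DHS. The argument for $C^{2}_{2n}$ is identical, using $\text{Spec}_{H}(C^{2}_{2n})=\text{Spec}_{H}(C^{1}_{n}\cup C^{1}_{n})$: the mate $C^{1}_{n}\cup C^{1}_{n}$ is again disconnected, hence not reachable from the connected $C^{2}_{2n}$ by switching. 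Together with the exclusion of $C_{4}$ and $C_{4}^{2}$, this covers all $n\geq2$.

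I do not expect any genuine obstacle here, since the spectral content is already packaged in the two given identities and the invariance of connectivity under switching is immediate from the definition. The only points deserving a sentence of care are that the exhibited mates are legitimate cospectral mates (identical spectrum, and automatically the same order and size) and that they are genuinely disconnected, both of which follow at once from the constructions.
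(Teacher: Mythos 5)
Your proof is correct and takes essentially the same route as the paper: the paper derives the theorem directly from the same two displayed identities $\text{Spec}_{H}(C_{2n})=\text{Spec}_{H}(C_{n}\cup C^{2}_{n})$ and $\text{Spec}_{H}(C^{2}_{2n})=\text{Spec}_{H}(C^{1}_{n}\cup C^{1}_{n})$, leaving implicit the point you spell out, namely that switching preserves the underlying graph and hence a connected cycle cannot be switching equivalent to a disconnected mate.
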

\begin{remark}
	Here, we introduce two other mixed graphs which are cospectral mates of $ C_{2n} $ and $ C^{2}_{2n} $. By $ E_{r} $ and $ F_{r} $, we mean the mixed graph and signed graph depicted in Figure \ref{Qhk}, where $ G(E_{r})= G(F_{r})=\theta_{3,3,r} $. Using the switching function introduced in Remark \ref{remark8}, we have
	\begin{equation*}
	\text{Spec}_{H}(C^{2}_{2n})=\text{Spec}(C_{2n},-), \quad \text{Spec}_{H}(E_{r})=\text{Spec}(F_{r}). 
	\end{equation*}
	Now, by results in \cite{ss}, one can state the following theorem which is analogous to the signed graphs.
\end{remark}
\begin{figure}
	\centering
	\includegraphics[height=2.2cm]{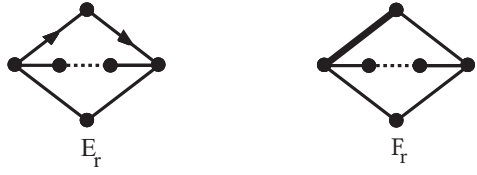}
	\caption{The mixed graph $E_{r}$ and the signed graph $ F_{r} $. The thick edge is negative.}
	\label{Qhk}
\end{figure}
\begin{theorem}
	For every integer $ r \geq 3 $, the graphs $ P_{r-1} \cup E_{r-1} $ and $ G_{r-2}^{r-2} $ are cospectral mates of $ C_{2r} $ and $ C^{2}_{2r} $, respectively.
\end{theorem}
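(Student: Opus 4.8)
The plan is to transfer both cospectralities to the signed-graph setting, where the corresponding facts are already available, using the switching equivalences recorded in Remark \ref{remark8} and in the remarks introducing $E_{r},F_{r}$ (see Figure \ref{Qhk}) and $G_{t}^{t+m},H_{t}^{t+m}$ (see Figure \ref{gh}); after that, only elementary bookkeeping with the explicit spectra remains. I would treat the two assertions separately.

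For $P_{r-1}\cup E_{r-1}$ and $C_{2r}$, a first sanity check is on order and size: the underlying graph $\theta_{3,3,r-1}$ of $E_{r-1}$ has $r+1$ vertices and $r+2$ edges, so $P_{r-1}\cup E_{r-1}$ has $2r$ vertices and $2r$ edges, matching $C_{2r}$. Next, since $P_{r-1}$ is a forest, Theorem \ref{forest} gives $\text{Spec}_{H}(P_{r-1}) = \text{Spec}(P_{r-1})$, while the remark introducing $E_{r},F_{r}$ supplies $\text{Spec}_{H}(E_{r-1}) = \text{Spec}(F_{r-1})$. Consequently
$$\text{Spec}_{H}(P_{r-1}\cup E_{r-1}) = \text{Spec}(P_{r-1})\cup\text{Spec}(F_{r-1}) = \text{Spec}(P_{r-1}\cup F_{r-1}).$$
The signed identity $\text{Spec}(P_{r-1}\cup F_{r-1}) = \text{Spec}(C_{2r})$ is exactly the analogous statement proved in \cite{ss}; together with $\text{Spec}_{H}(C_{2r}) = \text{Spec}(C_{2r})$ this gives the first claim.

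For $G_{r-2}^{r-2}$ and $C_{2r}^{2}$, I would argue directly from the spectra. The graph $G_{r-2}^{r-2}$ is a mixed unicyclic graph of Type $2$, so by Remark \ref{remark8} it switches to the signed graph $H_{r-2}^{r-2}$ with $\text{Spec}_{H}(G_{r-2}^{r-2}) = \text{Spec}(H_{r-2}^{r-2})$. Setting $t=r-2$ and $m=0$ in the formula for $\text{Spec}(H_{t}^{t+m})$ collapses both blocks to $\{2\cos\frac{2k+1}{2r}\pi : k=0,\ldots,r-1\}$, so every value $2\cos\frac{2k+1}{2r}\pi$ with $0\le k\le r-1$ occurs with multiplicity $2$. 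Comparing with $\text{Spec}_{H}(C_{2r}^{2}) = \{2\cos\frac{2k+1}{2r}\pi : k=0,\ldots,2r-1\}$ and using $\cos\frac{(2(2r-1-k)+1)\pi}{2r} = \cos\!\big(2\pi-\frac{(2k+1)\pi}{2r}\big) = \cos\frac{(2k+1)\pi}{2r}$, the indices $k$ and $2r-1-k$ pair up, so each of these $r$ values again appears twice; hence the two multisets coincide. Equivalently, one may match $\text{Spec}(H_{r-2}^{r-2})$, which is two identical blocks, against $\text{Spec}_{H}(C_{r}^{1}\cup C_{r}^{1})$, noted equal to $\text{Spec}_{H}(C_{2r}^{2})$ at the beginning of this section.

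The one genuinely non-elementary ingredient is the borrowed signed cospectrality $\text{Spec}(P_{r-1}\cup F_{r-1}) = \text{Spec}(C_{2r})$; everything else is switching plus cosine arithmetic. Were one to establish it internally instead of citing \cite{ss}, the main obstacle would be precisely the kind of coefficient comparison carried out in Theorem \ref{spectype1}: using \eqref{eza} one must match, in every degree, the weighted counts of elementary subgraphs of $P_{r-1}\cup F_{r-1}$ against those of $C_{2r}$, which reduces to proving the relevant matching identities for $\theta_{3,3,r-1}$ and checking that the contribution of the single negative cycle of $F_{r-1}$ reproduces the sign pattern of $C_{2r}$. That bookkeeping, rather than the transfer step, is where the real work would lie.
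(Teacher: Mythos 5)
Your proposal is correct and follows essentially the same route as the paper, which states this theorem without a written proof precisely because it reduces, via the switchings of Remark \ref{remark8} and of $E_r$ to $F_r$, to the signed-graph cospectralities established in \cite{ss}. Your only deviation is that you verify the second assertion directly from the explicit formula for $\text{Spec}(H_t^{t+m})$ with $t=r-2$, $m=0$ rather than citing \cite{ss}, which is a harmless (indeed slightly more self-contained) variant of the same argument.
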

In order to determine whether mixed cycles of Type $0$ of odd order are DHS or not, we need the following theorem and lemma.
\begin{theorem}\cite{liuli}\label{sym}
	If $ X $ is a mixed graph with no real mixed odd cycles, then its spectrum is symmetric about zero.
\end{theorem}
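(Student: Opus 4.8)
The plan is to show that the characteristic polynomial $\phi(X,\lambda)=\lambda^{n}+c_{1}\lambda^{n-1}+\cdots+c_{n}$ contains powers of $\lambda$ of a single parity only; equivalently, that $c_{k}=0$ for every odd $k$. I would first record why this is exactly the algebraic form of the statement. If the spectrum is symmetric about zero, the nonzero eigenvalues occur in pairs $\pm\mu$ and $\phi(X,\lambda)$ is a product of factors $\lambda^{2}-\mu^{2}$ times a power $\lambda^{m}$ coming from the eigenvalue $0$; hence it is a polynomial in $\lambda^{2}$ multiplied by $\lambda^{n\bmod 2}$, which forces every odd-indexed coefficient to vanish. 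Conversely, the vanishing of those coefficients gives $\phi(X,-\lambda)=(-1)^{n}\phi(X,\lambda)$, which pairs each eigenvalue $\mu$ with $-\mu$ of equal multiplicity. So the whole problem reduces to a parity statement about the coefficients $c_{k}$.

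The heart of the argument is then a parity count based on the coefficient formula of Theorem \ref{tcp}. I would fix an odd index $k$. By \eqref{eccpm}, the quantity $(-1)^{k}c_{k}$ is a signed, weighted sum over all real elementary subgraphs $H$ of $X$ of order exactly $k$. Each such $H$ is a disjoint union of components, every one of which is either a copy of $K_{2}$ or a real mixed cycle of $X$. A copy of $K_{2}$ contributes $2$ to the order of $H$, while each cycle component is, by the definition of a real elementary subgraph, a real mixed cycle of $X$; the hypothesis that $X$ has no real mixed odd cycles then forces every such cycle to have even order. Therefore the order of $H$ is a sum of even numbers, hence even.

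Consequently, when $k$ is odd there are no real elementary subgraphs of $X$ of order $k$ whatsoever, so the sum in \eqref{eccpm} is empty and $c_{k}=0$. As this holds for every odd $k$, the equivalence of the first paragraph yields that $\text{Spec}_{H}(X)$ is symmetric about zero. I do not expect a genuine obstacle here: the combinatorial parity observation closes the argument in one line once set up, and the only point deserving care is the bookkeeping of the first paragraph, namely confirming that ``spectrum symmetric about zero'' is literally equivalent to the vanishing of the odd coefficients, and treating the eigenvalue $0$ correctly when $n$ is odd.
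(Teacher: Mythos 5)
Your argument is correct: the paper itself states this result as a citation to \cite{liuli} and gives no proof, and your derivation via the coefficient formula of Theorem \ref{tcp} --- observing that every real elementary subgraph has only $K_{2}$'s and (by hypothesis) even real cycles as components, hence even order, so $c_{k}=0$ for odd $k$ --- is exactly the standard argument used in that reference. The reduction of ``spectrum symmetric about zero'' to the vanishing of the odd-indexed coefficients, including the handling of the eigenvalue $0$, is also handled correctly.
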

\begin{lemma}\cite[p.97]{god}\label{properinterlace}
	Let $ G $ be a connected graph. If $ H $ is a proper subgraph of $ G $, then $ \lambda_{1}(H) < \lambda_{1}(G) $.
\end{lemma}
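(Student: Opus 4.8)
The plan is to derive this from the Perron--Frobenius theory. For a simple graph the adjacency matrix is entrywise nonnegative, and the connectedness of $G$ makes $A(G)$ irreducible; so by Perron--Frobenius its largest eigenvalue $\lambda_{1}(G)$ coincides with the spectral radius $\rho(A(G))$, is a simple eigenvalue, and possesses a strictly positive eigenvector $x$ (which, by symmetry of $A(G)$, serves simultaneously as a left and a right Perron vector).

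I would first recast the inequality as a statement about entrywise domination on a common index set. Index everything by $V(G)$ and let $B$ be the adjacency matrix of $H$ padded with zero rows and columns at the vertices of $G$ missing from $H$ and with zeros at the positions of the edges of $G$ missing from $H$. Since $\lambda_{1}(H)=\rho(A(H))\ge 0$ and the padding only introduces the eigenvalue $0$, one has $\rho(B)=\lambda_{1}(H)$. By construction $0\le B\le A(G)$ entrywise, and $B\ne A(G)$: if $H$ drops an edge this is immediate, while if $H$ drops a vertex $v$ then connectedness of $G$ guarantees that $v$ has a neighbour, so the $v$-th row of $A(G)$ is nonzero while that of $B$ vanishes. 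Thus the lemma reduces to the monotonicity statement \emph{if $A$ is irreducible nonnegative, $0\le B\le A$ and $B\ne A$, then $\rho(B)<\rho(A)$}.

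The weak inequality $\rho(B)\le\rho(A)$ is routine: taking a nonnegative Perron eigenvector $y\ne 0$ of $B$ with $By=\rho(B)\,y$ and pairing it against $x>0$ gives $\rho(B)\,x^{\top}y=x^{\top}By\le x^{\top}Ay=\rho(A)\,x^{\top}y$, and $x^{\top}y>0$ finishes it. The hard part will be the strictness, and I expect the equality case to be the one delicate point. Assuming $\rho(B)=\rho(A)=:\rho$, from $Ay\ge By=\rho y$ and $y\ge 0$ we obtain $Ay-\rho y\ge 0$; pairing this with $x>0$ forces $x^{\top}(Ay-\rho y)=0$ and hence $Ay=\rho y$. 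Now irreducibility of $A$ makes its Perron vector strictly positive, so $y>0$; then $(A-B)y=0$ with $A-B\ge 0$ and $y>0$ compels $A-B=0$, contradicting $B\ne A$. This yields $\rho(B)<\rho(A)$, that is, $\lambda_{1}(H)<\lambda_{1}(G)$. The essential role of the connectedness hypothesis is visible in that irreducibility is invoked twice — once to produce the strictly positive vector $x$ that drives the estimate, and again to force $y>0$ in the equality case — so it cannot be weakened.
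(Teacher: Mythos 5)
Your proof is correct and complete: the reduction to the entrywise comparison $0\le B\le A(G)$, $B\ne A(G)$, followed by the Perron--Frobenius monotonicity argument with the equality case settled via the positive left eigenvector, is exactly the standard route to this fact. The paper itself offers no proof here --- Lemma \ref{properinterlace} is simply quoted from Godsil's book --- so there is nothing to compare against beyond noting that your argument is the textbook one and correctly isolates where connectedness (irreducibility) is needed, both for the strictly positive vector $x$ and for forcing $y>0$ in the equality case.
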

\begin{theorem}
	For every positive integer $ n $, the mixed cycle $ C_{2n+1} $ is DHS.
\end{theorem}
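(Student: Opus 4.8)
The plan is to take a mixed cospectral mate $X$ of $C_{2n+1}$ and prove it is switching equivalent to $C_{2n+1}$. I would start from the explicit spectrum $\mathrm{Spec}_H(C_{2n+1})=\{2\cos\frac{2k\pi}{2n+1}:k=0,\dots,2n\}$ and read off four facts: $X$ has order and size $2n+1$; the value $2$ is an eigenvalue of multiplicity one, so $\lambda_1(X)=2$; neither $-2$ nor $0$ is an eigenvalue; and the spectrum is \emph{not} symmetric about $0$, since $-2\cos\frac{2k\pi}{2n+1}=2\cos\frac{2k'\pi}{2n+1}$ would force $2k+2k'=2n+1$, impossible by parity. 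In particular all eigenvalues lie in $(-2,2]$, so by the interlacing theorem (Theorem \ref{interlace}) no induced subgraph of $X$ may have an eigenvalue $\le -2$ or $>2$.

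Next I would locate a genuine (Type $0$) odd cycle. Because the spectrum is not symmetric, Theorem \ref{sym} forces $X$ to contain a real odd cycle. Using that $\mathrm{tr}\,H(X)^3=\mathrm{tr}\,A(C_{2n+1})^3=0$, so (by the value $+1$, $-1$ or $\pm i$ of a triangle according to its type) $X$ has no real triangle, and that an odd Type $2$ cycle contains $-2$ and is thus barred by the interlacing bound, I would extract an induced odd cycle of Type $0$, say $C_\ell$, with $\lambda_1(C_\ell)=2$.

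The core of the argument is to show that this $C_\ell$ is an entire component. Here the naive route is blocked: the Hermitian index is not monotone under passing to mixed subgraphs (for example $\lambda_1(P_4)>\lambda_1(C_4^2)$ although $P_4\subset C_4^2$), so Lemma \ref{properinterlace} applies only to the underlying graph and yields merely $\lambda_1(A(G(H)))>2$ for the component $H\supseteq C_\ell$, which is too weak. Instead I would test the Rayleigh quotient of $H(H)$ on the Perron vector $\mathbf 1$ of $C_\ell$ extended by a suitably phased value $z$ at an external neighbour $u$ of $C_\ell$: after optimising the phase it equals $\tfrac{2\ell+2|z|\,|S|}{\ell+|z|^2}$ with $S=\sum_{v\sim u,\,v\in C_\ell}h_{vu}$, which exceeds $2$ for small $|z|>0$ unless $S=0$. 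If $u$ met two consecutive vertices of $C_\ell$ this cancellation would produce a real (Type $2$) triangle, contradicting the previous paragraph; thus any surviving attachment forces a mixed $\theta$-graph on $u$ and two nonconsecutive vertices of $C_\ell$, and these configurations are eliminated by a $\theta$-graph and forbidden-induced-subgraph analysis parallel to Lemmas \ref{theta} and \ref{2-theta} (now with the interval $(-2,2]$). Hence no vertex attaches to $C_\ell$ and $H=C_\ell$.

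Finally I would assemble the global picture. Running the same localisation on any other component shows that a second induced Type $0$ odd cycle would be its own component and would contribute a second eigenvalue $2$, contradicting simplicity; hence the union $X'$ of the components other than $H$ contains no real odd cycle, so by Theorem \ref{sym} its spectrum is symmetric about $0$. But $\mathrm{Spec}_H(C_{2n+1})\setminus\mathrm{Spec}_H(C_\ell)$ is not symmetric: the eigenvalue $2\cos\frac{2\pi}{2n+1}$ survives (it equals a $C_\ell$-eigenvalue only if $\ell\mid 2n+1$ with quotient $\ge 2n+1$, which is impossible), while its negative $2\cos\frac{(2n-1)\pi}{2n+1}$ never occurs in $\mathrm{Spec}_H(C_{2n+1})$ by the same parity obstruction. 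Therefore $X'$ is empty, $X=C_\ell$, and matching orders gives $\ell=2n+1$; since a Type $0$ cycle is switching equivalent to $C_{2n+1}$ (Theorem \ref{Types}), $X$ is switching equivalent to $C_{2n+1}$. I expect the attachment and $\theta$-graph step of the third paragraph to be the main obstacle, precisely because the non-monotonicity of the Hermitian index means the single-cycle structure must be forced by the finite list of forbidden induced configurations rather than by a one-line extremal inequality.
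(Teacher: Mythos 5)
There is a genuine gap, and it sits exactly where you predicted: the attachment/$\theta$-graph step. The decisive observation you are missing is much simpler than anything in your third paragraph. Since $|E(X)|=|V(X)|=2n+1$ and no component of $X$ can be a tree --- a mixed tree is switching equivalent to its underlying tree by Theorem \ref{forest}, hence has spectrum symmetric about $0$, while $\mathrm{Spec}_H(C_{2n+1})$ contains no pair $\{x,-x\}$ and does not contain $0$ --- \emph{every component of $X$ is unicyclic}. This single edge-count argument eliminates all $\theta$-graphs, all multiple attachments, and all chords in one stroke, so there is nothing left to forbid. The paper then classifies each unicyclic component by Theorem \ref{prop1}: its unique cycle is automatically induced; if that cycle is of Type $1$ or of even order the component has no real odd cycle and Theorem \ref{sym} makes its spectrum symmetric, a contradiction; an odd Type $2$ cycle is killed by interlacing via its eigenvalue $-2$; and for a Type $0$ unicyclic component the Hermitian matrix \emph{is} the ordinary adjacency matrix of the underlying graph, so Lemma \ref{properinterlace} applies directly and your worry about non-monotonicity of the Hermitian index is moot. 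Your Rayleigh-quotient computation is a correct and even elegant substitute for that last step, but it is doing work the edge count makes unnecessary.

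Two of your intermediate steps would not survive scrutiny as written. First, Theorem \ref{sym} only yields a real odd cycle that need not be induced; a shortest odd cycle of the underlying graph is induced but could be of Type $1$, whose spectrum lies in $(-2,2)$ and is therefore not excluded by interlacing, so your ``extraction'' of an induced Type $0$ odd cycle is incomplete. (Relatedly, $\mathrm{tr}\,H(X)^3=0$ only gives that the numbers of positive and negative triangles agree, not that there are none --- you need interlacing to kill the negative ones first --- and it fails outright for $n=1$.) Second, the proposed analogue of Lemmas \ref{theta} and \ref{2-theta} ``with the interval $(-2,2]$'' does not go through: those lemmas work by exhibiting $(-2,2)$-out induced subgraphs, and the graphs (a), (b), (c), (e) they rely on are Smith graphs whose largest eigenvalue is exactly $2$ and simple, so they are not $(-2,2]$-out and a single induced copy cannot be forbidden in the present setting. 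Rebuilding that forbidden-subgraph apparatus for the closed interval would be a substantial project --- and the unicyclic decomposition shows it is entirely avoidable.
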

\begin{proof}
	Recall that  \[\text{Spec}_{H}(C_{2n+1})=
	\begin{pmatrix}
	2 & 2\cos\dfrac{2\pi}{2n+1} & 2\cos\dfrac{4\pi}{2n+1} & \dots & 2\cos\dfrac{2n\pi}{2n+1} \\
	1 & 2 & 2 & \dots & 2
	\end{pmatrix}
	\]
	and if $ x \in \text{Spec}_{H}(C_{2n+1}) $, then $ -x \notin \text{Spec}_{H}(C_{2n+1}) $.
	
	Now, suppose that $ \text{Spec}_{H}(X)=\text{Spec}_{H}(C_{2n+1}) $ and let $ X_{1}, \ldots , X_{k} $ be the connected components of $ X $. Note that for $ i=1,\ldots , k $, $ X_{i} $ is not a tree, because the spectrum of every tree is symmetric about zero. Therefore, since $ \vert E(X) \vert =\vert V(X) \vert= 2n+1$, each $ X_{i} $ is a unicyclic graph. By Theorem \ref{sym}, each $ X_{i} $ is a mixed unicyclic graph of Type $0$ or $2$ whose cycle is of odd order. In addition, Theorem \ref{interlace} implies that none of $ X_{i} $ is a mixed unicyclic graph of Type 2, because otherwise, since $ -2 $ is an eigenvalue of odd cycles of Type 2, an eigenvalue which is at most $ -2 $ appears in the spectrum of $ X $, a contradiction. Finally, by Lemma \ref{properinterlace}, each $ X_{i} $ is a cycle of Type $ 0 $ and therefore $X$ is the disjoint union of one or more odd cycles of Type 0. If $X$ contains two or more cycles of Type $0$, then the multiplicity of $2$ is at least two, a contradiction. Hence, $X$ is $C_{2n+1}$, and the proof is complete.\smallskip
\end{proof}
Since $ \text{Spec}_{H}(C_{2n+1})=-\text{Spec}_{H}(C_{2n+1}^{2}) $, a similar argument works for odd cycles of Type 2. So, we have the following Theorem.
\begin{theorem}
	For every positive integer $ n $, the mixed cycle $ C_{2n+1}^{2} $ is DHS.
\end{theorem}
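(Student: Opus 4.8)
The plan is to mirror the proof that $C_{2n+1}$ is DHS, reflecting every inequality about the largest eigenvalue into the corresponding one about the smallest, since the relation $\text{Spec}_{H}(C_{2n+1}^{2})=-\text{Spec}_{H}(C_{2n+1})$ transports all the relevant spectral features. First I would record the three facts inherited from this negation: the spectrum of $C_{2n+1}^{2}$ satisfies $x\in\text{Spec}_{H}(C_{2n+1}^{2})\Rightarrow -x\notin\text{Spec}_{H}(C_{2n+1}^{2})$ (in particular $0$ is not an eigenvalue), its largest eigenvalue is $2\cos\frac{\pi}{2n+1}<2$, and its smallest eigenvalue is $-2$ with multiplicity exactly $1$. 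Let $X$ be a cospectral mate with connected components $X_{1},\dots,X_{k}$.

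Next I would reduce to odd unicyclic components. No $X_{i}$ can be a tree: a tree with an edge has $\lambda_{1}>0$ and, being bipartite, a symmetric spectrum, so $\pm\lambda_{1}(X_{i})$ would both lie in $\text{Spec}_{H}(X)$, violating the anti-symmetry above, while an isolated vertex would force $0\in\text{Spec}_{H}(X)$. Since $|E(X)|=|V(X)|=2n+1$, rewriting this as $\sum_{i}(\text{corank of }X_{i})=k$ with every corank $\ge 1$ forces each corank to equal $1$, so every $X_{i}$ is unicyclic. If the cycle of some $X_{i}$ were even or of Type $1$, then $X_{i}$ would contain no real odd cycle, whence its spectrum would be symmetric by Theorem \ref{sym} — again impossible. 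Thus, by Theorem \ref{prop1}, each $X_{i}$ is a unicyclic graph of Type $0$ or $2$ whose cycle has odd order.

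To finish I would eliminate Type $0$ and then pin down the shape of each component. If some $X_{i}$ were of Type $0$, it would contain an induced odd cycle of Type $0$ with largest eigenvalue $2$, so by Theorem \ref{interlace} we would get $\lambda_{1}(X)\ge 2$, contradicting $\lambda_{1}(X)=2\cos\frac{\pi}{2n+1}<2$; hence every $X_{i}$ is of Type $2$. The main obstacle is showing that each such $X_{i}$ equals its own cycle, because the available tool, Lemma \ref{properinterlace}, controls only the \emph{largest} eigenvalue, whereas here the binding constraint is the smallest eigenvalue $-2$. I would resolve this through Remark \ref{remark8}: switch $X_{i}$ to the signed graph $(G(X_{i}),-)$, and observe that flipping all of its signs negates the spectrum and turns the odd negative cycle into a positive one, so the flipped signed graph switches to the underlying ordinary graph $G(X_{i})$. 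Consequently $-\lambda_{\min}(X_{i})=\lambda_{1}(G(X_{i}))$ for the ordinary unicyclic graph $G(X_{i})$; if $X_{i}$ strictly contained its cycle, Lemma \ref{properinterlace} would give $\lambda_{1}(G(X_{i}))>2$, i.e. $\lambda_{\min}(X_{i})<-2$, contradicting that $-2$ is the least eigenvalue of $X$. Therefore each $X_{i}$ is an odd cycle of Type $2$; since each contributes the eigenvalue $-2$ exactly once and $-2$ has multiplicity $1$, there is a single component, forcing $X=C_{2n+1}^{2}$.

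A cleaner endgame, which I would note as an alternative, is to apply the same sign-flip involution to \emph{all} components simultaneously: it sends the family $\{X_{i}\}$ (odd unicyclic of Type $0$ or $2$) to a mixed graph $X'$ with $\text{Spec}_{H}(X')=-\text{Spec}_{H}(X)=\text{Spec}_{H}(C_{2n+1})$, swapping Type $0$ and Type $2$ on each odd cycle. Invoking the already-proved fact that $C_{2n+1}$ is DHS, $X'$ must be a single Type $0$ odd cycle on $2n+1$ vertices, and applying the involution back yields $X=C_{2n+1}^{2}$.
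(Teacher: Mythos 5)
Your proof is correct and follows essentially the same route as the paper, which simply remarks that since $\text{Spec}_{H}(C_{2n+1}^{2})=-\text{Spec}_{H}(C_{2n+1})$ ``a similar argument works'' for odd cycles of Type $2$. Your sign-flip observation correctly supplies the one detail the paper leaves implicit, namely how Lemma~\ref{properinterlace} (stated for the \emph{largest} eigenvalue of an ordinary graph) is transported to control the \emph{smallest} eigenvalue of a Type~$2$ component.
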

Now, we turn to this question: Is the cycle $ C^{1}_{n} $ DHS for each $ n $? Note that analogous to the paths, the eigenvalues of $ C_{n}^{1} $ are between $ -2 $ and $ 2 $ and they are simple. So, all the previous discussions for the paths hold. Therefore if $ X $ is a mixed cospectral mate of $ C^{1}_{n}$, then all possible components of $ X $ are  expressed in Figure \ref{acceptable}.

\begin{theorem}\label{type1even}
	Let $ n\geq2 $ be an integer. Then, the cycle $ C^{1}_{2n} $ is DHS if and only if $ n \neq 6 $.
\end{theorem}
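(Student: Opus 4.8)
The plan is to take an arbitrary mixed cospectral mate $X$ of $C_{2n}^{1}$ and prove that it is switching equivalent to $C_{2n}^{1}$ precisely when $n\neq6$. From the spectrum formula, $\text{Spec}_{H}(C_{2n}^{1})=\{2\cos\frac{2k+1}{4n}\pi:k=0,\ldots,2n-1\}$, so every eigenvalue of $X$ is simple, lies in $(-2,2)$, and has the form $2\cos\frac{p}{q}\pi$ with $\gcd(p,q)=1$ and $4\mid q$. Also $X$ has order and size both $2n$, and each of its components is one of the graphs of Figure \ref{acceptable}; since every such graph is a tree or a unicyclic graph, comparing orders and sizes componentwise gives $\sum_i(m_i-n_i)=m-n=0$, where a tree contributes $-1$ and a unicyclic graph $0$. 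Hence $X$ has no tree component, and every component of $X$ is unicyclic.

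Next I would determine which unicyclic graphs can occur. The constraints ``$4\mid q$'' and simplicity discard almost all candidates: cycles of Type $0$ and all odd cycles carry $\pm2$ (with $q=1$); each $C_{2j}^{2}$ has every eigenvalue of multiplicity $2$; each odd cycle of Type $1$ carries an eigenvalue with $q\equiv2\pmod4$ (for example the eigenvalue $0$ of $C_{3}^{1}$); and most cycle-with-tree graphs of Figure \ref{acceptable} are excluded because they either repeat an eigenvalue or carry an eigenvalue lying outside $\text{Spec}_{H}(C_{2n}^{1})$ (for instance $\sqrt2=2\cos\frac{\pi}{4}$, an eigenvalue of $C_{2n}^{1}$ only when $n$ is odd). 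After this pruning the admissible components are the even cycles $C_{2j}^{1}$ of Type $1$, together with at most a short, explicit list of special unicyclic graphs. Moreover, since each eigenvalue of a component must be an eigenvalue of $C_{2n}^{1}$, a direct comparison of the two spectra shows that $\text{Spec}_{H}(C_{2j}^{1})\subseteq\text{Spec}_{H}(C_{2n}^{1})$ forces $j\mid n$ with $n/j$ odd.

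If no special component occurs, $X$ is a disjoint union of even Type $1$ cycles, and the largest eigenvalue settles the matter. We have $\lambda_{1}(C_{2n}^{1})=2\cos\frac{\pi}{4n}$, while every eigenvalue of $C_{2j}^{1}$ is at most $2\cos\frac{\pi}{4j}<2\cos\frac{\pi}{4n}$ for $j<n$. Thus the only even Type $1$ cycle whose spectrum contains $\lambda_{1}(X)=\lambda_{1}(C_{2n}^{1})$ is $C_{2n}^{1}$ itself, so one component of $X$ must be $C_{2n}^{1}$. As this component already uses all $2n$ vertices, $X=C_{2n}^{1}$; in particular $X$ is switching equivalent to $C_{2n}^{1}$, and in this case $C_{2n}^{1}$ is DHS.

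The decisive case is when a special unicyclic component is present. Here I would group the eigenvalues of $C_{2n}^{1}$ into blocks according to their reduced denominator $q=4d$ with $d\mid n$, and match the explicitly known spectrum of each special graph against these blocks. This matching is possible only for $n=6$: for $C_{12}^{1}$ the block with $q=8$ equals $\text{Spec}_{H}(C_{4}^{1})$, while the remaining block, consisting of the eight eigenvalues $2\cos\frac{m\pi}{24}$ with $\gcd(m,24)=1$, is realized by an explicit $8$-vertex mixed unicyclic graph $Z$ from Appendix A. Consequently $C_{4}^{1}\cup Z$ is a cospectral mate of $C_{12}^{1}$ with two components, so it cannot be switching equivalent to the connected cycle $C_{12}^{1}$, proving that $C_{12}^{1}$ is not DHS; for every $n\neq6$ the matching fails and the only mate is $C_{2n}^{1}$. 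I expect the main obstacle to be exactly this final step: assembling the finite list of special unicyclic graphs all of whose eigenvalues have the form $2\cos\frac{p}{q}\pi$ with $4\mid q$, and checking (via the Appendix A spectra, together with a bounded computer search) that their spectra embed into $\text{Spec}_{H}(C_{2n}^{1})$ only in the single case $n=6$.
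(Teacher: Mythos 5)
Your overall strategy --- prune the admissible components of Figure \ref{acceptable} by the conditions ``simple eigenvalues of the form $2\cos\frac{p}{q}\pi$ with $\gcd(p,q)=1$ and $4\mid q$'', use the largest eigenvalue to force $X=C_{2n}^{1}$ when no special component occurs, and exhibit $C_{4}^{1}\cup(\mathrm{q})$ as the unique exception at $n=6$ --- is the same as the paper's, and your identification of the $C_{12}^{1}$ mate is correct. But the step you yourself call ``the main obstacle'' is precisely where the proof lives, and your proposal does not supply the tool that closes it. The families $(\mathrm{G}_{t})$ and $(\mathrm{G}_{t}^{t+m})$ are \emph{infinite}, and for $t$ even all of their eigenvalues do have reduced denominator divisible by $4$ (your $2\cos\frac{\pi}{4}$ test only eliminates $(\mathrm{G}_{t})$ when $n$ is even, since $2\cos\frac{\pi}{4}\in\mathrm{Spec}_{H}(C_{2n}^{1})$ exactly when $n$ is odd); so your ``short, explicit list of special unicyclic graphs'' is not finite, and the ``bounded computer search'' you invoke is unavailable until $n$ is bounded. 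The paper disposes of both families, and also shows that at most one even Type-$1$ cycle can be a component, by computing the constant term of the characteristic polynomial via Theorem \ref{tcp}: it equals $\pm2$ for $C_{2n}^{1}$, lies in $\{-4,0,4\}$ for $(\mathrm{G}_{t})$ and $(\mathrm{G}_{t}^{t+m})$, and would be a multiple of $4$ if two cycle components were present. Combined with your (correct) observation that a component $C_{2m}^{1}$ forces $n=(2k+1)m$, hence $m\le n/3$, and the fact that the surviving candidates $(\mathrm{q})$, $(\mathrm{s})$, $(\mathrm{k})$ have orders $8$, $8$, $4$, this gives $8+8+4+\frac{2n}{3}\ge 2n$, i.e.\ $n\le 15$, after which the finite check you envisage becomes legitimate. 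Without some argument of this kind, the direction ``$n\ne 6$ implies DHS'' is not proved.

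A smaller point: your opening claim that every graph in Figure \ref{acceptable} is a tree or unicyclic is false. The Graphs (t), (u), (v) and (h) are bicyclic; for instance $P_{2}\cup P_{4}\cup(\mathrm{h})$ is cospectral with $P_{14}$, which forces (h) to have $9$ edges on $8$ vertices. Hence the edge-counting argument does not by itself exclude tree components (a tree could be balanced against a bicyclic component). This is repairable, because every bicyclic candidate is already eliminated by your own spectral tests (each has an eigenvalue with $q$ odd, $q=2$, or $q\equiv2\pmod 4$), and the paper indeed excludes trees spectrally (zero eigenvalues, or eigenvalues such as $2\cos\frac{\pi}{2m+1}$ not in the spectrum) rather than by counting; but as written the deduction rests on a wrong premise and the pruning must be done before the count.
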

\begin{proof}
	Let $ X $ be a mixed cospectral mate of $ C^{1}_{2n} $. Recall that 
	\begin{equation*}
	\text{Spec}_{H}(C^{1}_{2n})= \{2\cos \dfrac{2k+1}{4n}\pi , ~ k=0,1, \ldots , 2n-1 \}.
	\end{equation*}
	So, the eigenvalues of these cycles have the form $ 2\cos (\dfrac{p}{q}\pi) $, where $ \gcd(p,q)=1 $ and  $ 4 \vert q $. Also, note that $ 0 \notin \text{Spec}_H(C^{1}_{2n}) $. Since the Graphs (w), $ (\text{D}_{l}) $ and paths of odd order have a zero eigenvalue, they cannot be components of $ X $. Also, the spectrum of a path of even order, say $ 2m $, contains $ 2 \cos \dfrac{\pi}{2m+1} $ which is not an eigenvalue of $ C^{1}_{2n} $. Finally, the Graphs (y) and (z) have the eigenvalue $ 2\cos\dfrac{\pi}{3} $ and $ 2\cos\dfrac{\pi}{30} $, respectively, which are not in the spectrum of $ C^{1}_{2n} $. This implies that none of the components of $ X $ is a tree and therefore, all components of $ X $ are unicyclic.
	
	Similarly, by considering the spectrum of unicyclic graphs given in Figure \ref{acceptable}, one can see that only the Graphs (q), (s), $ (\text{G}_{t}) $, $ (\text{G}_{t}^{t+m}) $, (k) and even cycles of Type 1 can be components of $ X $. Now, by Equation (\ref{eccpm}), the constant term of the characteristic polynomial of  $ C^{1}_{2n} $ is either $-2$ or $2$, while the constant term of the characteristic polynomial of the Graphs $ (\text{G}_{t}) $ and $ (\text{G}_{t}^{t+m}) $ belongs to $\{-4,0,4\}$. Therefore, $ (\text{G}_{t}) $ and $ (\text{G}_{t}^{t+m}) $ cannot be components of $ X $. Also, if for some $ m $ ($ m < n $), $ C^{1}_{2m} $ is a component of $ X $, then for some $k \geq 1$ we have,
	\begin{equation*}
	2\cos \dfrac{\pi}{4m}=2\cos \dfrac{(2k+1)\pi}{4n}.
	\end{equation*} 
	So, $ n=(2k+1)m $, hence $ m\leq \frac{n}{3} $. Note that at most one even cycle of Type 1 can be a component of $ X $, because otherwise the constant term of the characteristic polynomial of $ X $ would be a multiple of 4, a contradiction.
	
	Since the Graphs (q), (s) and (k) are of order 8,$\,$8 and 4, respectively, we should have
	\begin{equation*}
	8+8+4+\frac{2n}{3} \geq 2n.
	\end{equation*}  
	Therefore, $ n \leq 15 $. In other words, if $ n\geq 16 $, then $ C^{1}_{2n} $ is DHS.
	
	Obviously, for $ m < n $, $\lambda_{1}(C^{1}_{2m}) <  \lambda_{1}(C^{1}_{2n}) $. Therefore, $ \lambda_{1}(C^{1}_{2n})$ should be an eigenvalue of one of the Graphs (q), (s) and (k). Now, the spectrum of these graphs shows that only $ C^{1}_{12} $ has a cospectral mate, which is the Graph $  C^{1}_{4} \cup \text{(q)}  $. This completes the proof.
\end{proof}
We close this paper by characterizing the odd cycles of Type $1$ which are DHS.
\begin{theorem}
	Let $ n $ be a positive integer. Then, the cycle $ C^{1}_{2n+1} $ is DHS if and only if $ n \notin \lbrace 3,4,7 \rbrace $.
\end{theorem}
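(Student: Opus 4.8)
The plan is to mirror the argument of Theorem~\ref{type1even} for the even case, but with the divisibility condition on the denominators of the eigenvalues replaced by its odd analogue, and with careful attention to the fact that now $0$ is itself an eigenvalue. Let $X$ be a mixed cospectral mate of $C^{1}_{2n+1}$, so $X$ has order and size $2n+1$. Since the unique cycle of $C^{1}_{2n+1}$ has value $\pm i$, it has no real odd cycle, so by Theorem~\ref{sym} its spectrum is symmetric about $0$; as its order is odd, $0$ occurs as a (simple) eigenvalue. Recalling $\mathrm{Spec}_{H}(C^{1}_{2n+1})=\{2\cos\frac{(2k+1)\pi}{2(2n+1)}:k=0,\ldots,2n\}$, every eigenvalue has the form $2\cos\frac{p}{q}\pi$ with $\gcd(p,q)=1$ and $q\equiv 2\pmod 4$. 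Because all eigenvalues are simple and lie in $(-2,2)$, the discussion preceding the theorem applies and the components of $X$ lie among the graphs of Figure~\ref{acceptable}.

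First I would pin down the admissible components. Since the order and size of $X$ both equal $2n+1$, and each component of Figure~\ref{acceptable} is a path or a unicyclic graph, a vertex--edge count ($o_{1}+\cdots+o_{c}=2n+1$ forces each tree component to lose one edge) shows that $X$ has no tree component; in particular no path and no isolated vertex occurs, and every component is unicyclic. I then filter the unicyclic graphs of Figure~\ref{acceptable} by the condition $q\equiv 2\pmod 4$, using their spectra from Appendix~A. This discards the even cycles of Type~$1$ and the graphs $(\mathrm{G}_{t})$ and $(\mathrm{G}_{t}^{t+m})$, whose spectra force a denominator divisible by $4$ (respectively a zero eigenvalue of multiplicity $2$), as well as every graph carrying an eigenvalue with odd $q$. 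What survives is the family of odd cycles of Type~$1$ together with a short, explicit list of small special unicyclic graphs.

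Next I would exploit the eigenvalue $0$. As $0$ is simple in $X$, at most one component of $X$ can have $0$ in its spectrum; every odd cycle of Type~$1$ does, so $X$ contains at most one Type-$1$ cycle, and any further components must be taken from the zero-free graphs on the surviving list. Comparing largest eigenvalues through Lemma~\ref{properinterlace} gives $\lambda_{1}(C^{1}_{2m+1})<\lambda_{1}(C^{1}_{2n+1})$ for $m<n$, so if $X\neq C^{1}_{2n+1}$ then $2\cos\frac{\pi}{2(2n+1)}$ must be the largest eigenvalue of one of the finitely many bounded-order special graphs. Combined with the order identity, this caps $n$ by an absolute constant, after which a finite (computer-assisted) verification over the remaining orders isolates exactly $n\in\{3,4,7\}$ as the cases admitting a cospectral mate not switching-equivalent to $C^{1}_{2n+1}$, and exhibits those mates; for every other $n$ the only admissible graph is $C^{1}_{2n+1}$ itself, establishing that it is DHS.

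The main obstacle lies in the filtering and the zero-eigenvalue bookkeeping of the second and third steps. Unlike the even case of Theorem~\ref{type1even}, where $0\notin\mathrm{Spec}_{H}(C^{1}_{2n})$ let us reject at once every graph possessing a zero eigenvalue, here a single zero is permitted, so each special graph carrying exactly one zero re-enters the analysis and must be examined individually against its Appendix~A spectrum for compatibility with $q\equiv 2\pmod 4$. Determining this surviving list precisely, and then carrying out the finite verification that yields exactly the three exceptional orders together with their explicit cospectral mates, is where the real work will be concentrated.
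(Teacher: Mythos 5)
There is a genuine gap in your second step, and it is fatal to the conclusion. You assert that every component appearing in Figure~\ref{acceptable} is a path or a unicyclic graph, and from the identity $|V(X)|=|E(X)|=2n+1$ you conclude that $X$ has no tree component and that every component is unicyclic. But the admissible list is not restricted to paths and unicyclic graphs: by Lemma~\ref{2-theta} the underlying graphs $\theta_{2,4,4}$ and $\theta_{2,4,6}$ are permitted, and the Graphs (t), (u), (v) (as well as (g), (h)) have corank $2$, i.e.\ they carry one more edge than vertex minus one would allow for a unicyclic graph. Once such a component is present, the edge count no longer forbids tree components; on the contrary, it can force one. This is not a hypothetical loophole: the paper's list of possible components explicitly retains paths of odd order and the trees (w), (z), $(\text{D}_{l})$, and the actual cospectral mates that make $n=4$ and $n=7$ exceptional are $(\text{v})\cup P_{1}$ for $C^{1}_{9}$ and $(\text{z})\cup(\text{t})$ for $C^{1}_{15}$ --- the first contains the path $P_{1}$ and the second contains the tree (z). Your filtering discards both, so your ``finite verification'' would report that $C^{1}_{9}$ and $C^{1}_{15}$ are DHS and you would prove the wrong exceptional set.

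Apart from this, your overall strategy does track the paper's: restrict eigenvalues to the form $2\cos\frac{p}{q}\pi$ with $q\equiv 2\pmod 4$, use the simplicity of the eigenvalue $0$ to limit the number of Type-$1$ cycle components, and locate $\lambda_{1}(C^{1}_{2n+1})$ in a bounded-order special graph to cap $n$. Your elimination of $(\text{G}_{t})$ and $(\text{G}_{t}^{t+m})$ by denominator/zero-multiplicity arguments is a legitimate alternative to the paper's comparison $\lambda_{1}(\text{G}_{t})=2\cos\frac{\pi}{2t+4}<2\cos\frac{\pi}{2(t+4)}\leq\lambda_{1}(C^{1}_{2n+1})$. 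To repair the proof you must redo the component census keeping odd paths, (w), (z) and $(\text{D}_{l})$ in play (rejecting the latter family and the $\theta$-type trees only through the largest-eigenvalue comparison, not through an edge count), and then the bounded search will correctly return the mates of $C^{1}_{7}$, $C^{1}_{9}$ and $C^{1}_{15}$.
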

\begin{proof}
	Let $ X $ be a mixed cospectral mate of $ C^{1}_{2n+1} $. Recall that 
	\begin{equation*}
	\text{Spec}_{H}(C^{1}_{2n+1})= \{2\cos \dfrac{2k+1}{4n+2}\pi , ~ k=0,1, \ldots , 2n \}.
	\end{equation*}
	Similar to the discussions in the proof of Theorem \ref{type1even}, by considering the spectrum of the graphs in Figure \ref{acceptable}, we find that the only possible components of $ X $ are the Graphs (p), (t), (v), $ (\text{G}_{t}) $, $ (\text{G}_{t}^{t+m}) $, (w), (z), $ (\text{D}_{l}) $, paths of odd order and cycles of Type 1. Note that $ \lambda_{1}(C^{1}_{2n+1})= 2\cos \dfrac{\pi}{4n+2} $ and two Graphs $ (\text{G}_{t}) $ and $ (\text{G}_{t}^{t+m}) $ have $ t+4 $ and $ 2t+m+4 $ vertices, respectively, which are at most $ \vert V(C_{2n+1}^{1}) \vert = 2n+1 $. Notice that $ \lambda_{1}(\text{G}_{t})= 2\cos \dfrac{\pi}{2t+4} < 2\cos \dfrac{\pi}{2(t+4)}\leq 2\cos \dfrac{\pi}{4n+2} $ and $ \lambda_{1}(\text{G}_{t}^{t+m})=2\cos \dfrac{\pi}{2t+2m+4} < 2\cos \dfrac{\pi}{2(2t+m+4)}\leq 2\cos \dfrac{\pi}{4n+2}$. So, $ \lambda_{1}(C^{1}_{2n+1})$ cannot be an eigenvalue of these two family of graphs. The same discussion holds for $ (\text{D}_{l}) $. Hence, $ \lambda_{1}(C^{1}_{2n+1})$ appears in the spectrum of one of the Graphs (p), (t), (v), (w) and (z). Now, considering the spectrum of these graphs, it is seen that if $ 2n+1 \geq 17 $, then $ C^{1}_{2n+1} $ is DHS.
	
	Now, the spectrum of the mentioned graphs implies that $ C^{1}_{3} $, $ C^{1}_{5} $, $ C^{1}_{11} $ and $ C^{1}_{13} $ are DHS. Also, the class of all mixed cospectral mates of $ C^{1}_{7} $, $ C^{1}_{9} $  and $ C^{1}_{15} $ is as follows.
	
	$\qquad \qquad \qquad \qquad \quad \{C^{1}_{7},\text{(p)}\}, \quad \{C^{1}_{9}, \text{(v)} \cup P_{1}\}, \quad \{C^{1}_{15},\text{(z)} \cup \text{(t)}\}.$
\end{proof}
\newpage
\appendix
\section*{Appendix  A}
	\begin{figure}[!h]
		\centering
		$\includegraphics[height=13cm]{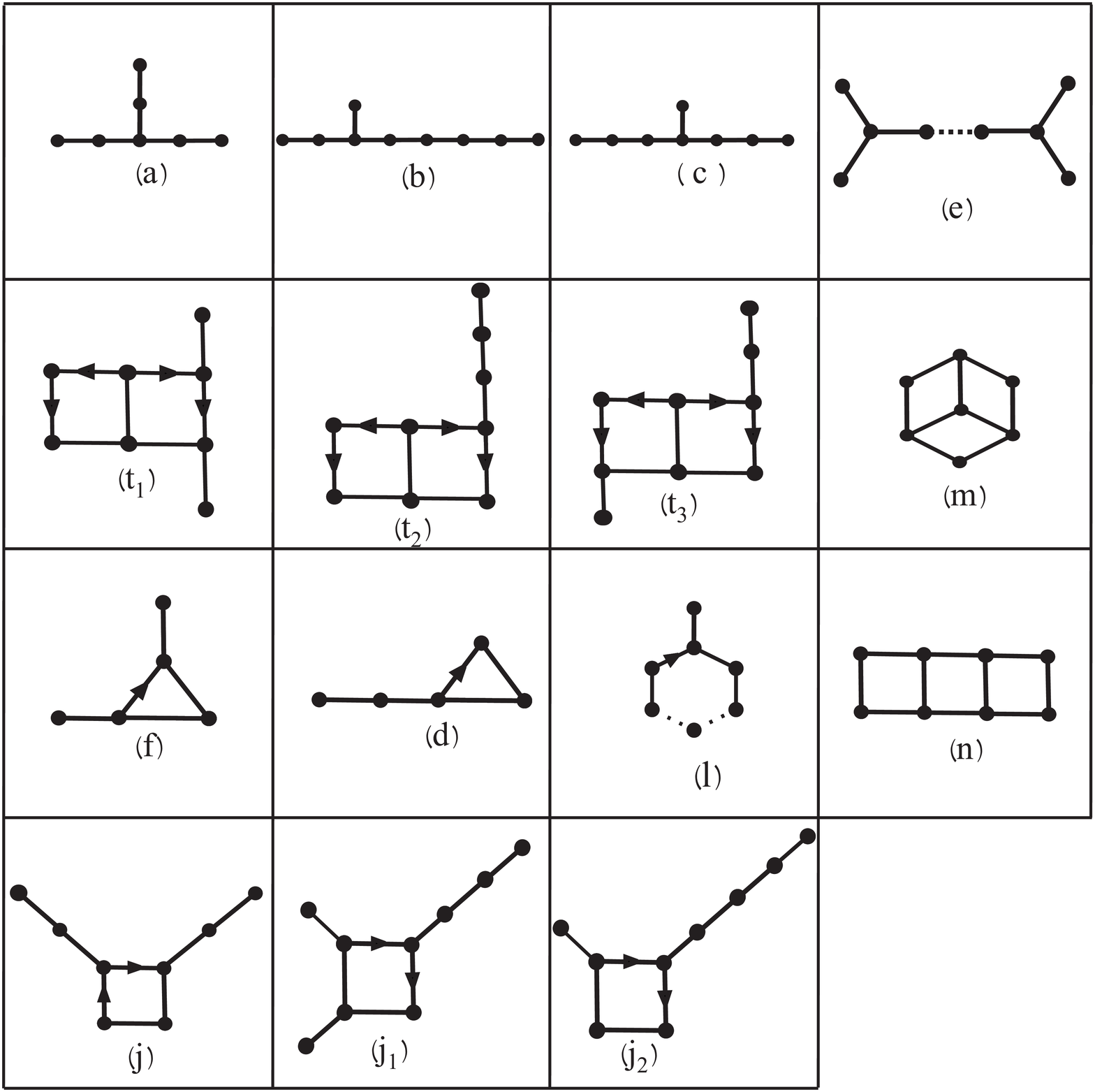}$
		\caption{Mixed $(-2,2)$-out graphs: Using a computer search, we find that all mixed graphs with the underlying Graph (m) and (n) are $(-2,2)$-out. It is shown in \cite{smith} that the Graphs (a), (b), (c) and (e) are $(-2,2)$-out. The Graph (l) has order $ t $, where $ 5 \leq t \leq 8 $.}
		\label{unacceptable}
	\end{figure}
\newpage
	\begin{figure}[!h]
		\centering
		\includegraphics[height=17cm]{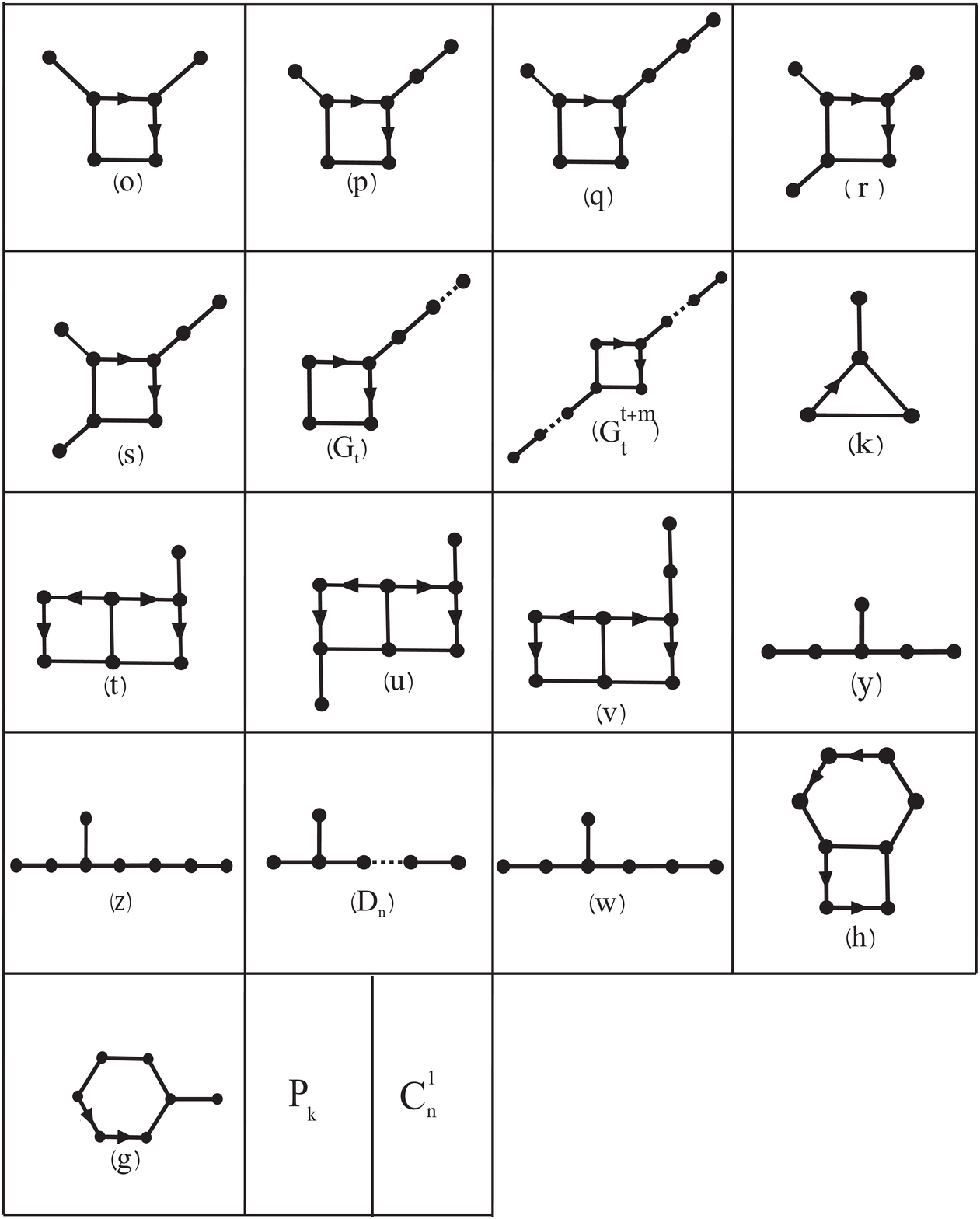}
		\caption{Admissible graphs: All  eigenvalues  of these graphs are in  $(-2,2)$ and  simple.}
		\label{acceptable}
	\end{figure}
	The spectrum of the graphs in Figure \ref{acceptable} are as follows.\\
		\vspace*{0.4cm}
	$\textrm{Spec}_{H}(\text{o})=\{2cos\dfrac{\pi}{9},2cos\dfrac{2\pi}{9},2cos\dfrac{4\pi}{9},2cos\dfrac{5\pi}{9},2cos\dfrac{7\pi}{9},2
	cos\dfrac{8\pi}{9}\}$\\
	\vspace*{0.4cm}
	$\textrm{Spec}_{H}(\text{p})=\{2cos\dfrac{\pi}{14}, 2cos\dfrac{3\pi}{14}, 2cos\dfrac{5\pi}{14},2cos\dfrac{7\pi}{14}, 2cos\dfrac{9\pi}{14}, 2cos\dfrac{11\pi}{14}, 2cos\dfrac{13\pi}{14}\}$ \\
	\vspace*{0.4cm}
	$\textrm{Spec}_{H}(\text{q})=\{2cos\dfrac{\pi}{24},2cos\dfrac{5\pi}{24},2cos\dfrac{7\pi}{24},2cos\dfrac{11\pi}{24},2cos\dfrac{13\pi}{24},2cos\dfrac{17\pi}{24},2cos\dfrac{19\pi}{24},2cos\dfrac{23\pi}{24}\}$\\
	\vspace*{0.4cm}
	$\textrm{Spec}_{H}(\text{r})=\{2cos\dfrac{\pi}{12},2cos\dfrac{2\pi}{12},2cos\dfrac{5\pi}{12},2cos\dfrac{6\pi}{12},2cos\dfrac{7\pi}{12},2cos\dfrac{10\pi}{12},2cos\dfrac{11\pi}{12}\}$\\
	\vspace*{0.4cm}
	$\textrm{Spec}_{H}(\text{g})=\{2cos\dfrac{\pi}{12},2cos\dfrac{2\pi}{12},2cos\dfrac{5\pi}{12},2cos\dfrac{6\pi}{12},2cos\dfrac{7\pi}{12},2cos\dfrac{10\pi}{12},2cos\dfrac{11\pi}{12}\}$\\
	\vspace*{0.4cm}
	$\textrm{Spec}_{H}(\text{s})=\{2cos\dfrac{\pi}{20},2cos\dfrac{3\pi}{20},2cos\dfrac{7\pi}{20},2cos\dfrac{9\pi}{20},2cos\dfrac{11\pi}{20},2cos\dfrac{13\pi}{20},2cos\dfrac{17\pi}{20},2cos\dfrac{19\pi}{20}\}$\\
	\vspace*{0.4cm}
	$\textrm{Spec}_{H}(\text{t})=\{2cos\dfrac{\pi}{10},2cos\dfrac{\pi}{6},2cos\dfrac{3\pi}{10},2cos\dfrac{5\pi}{10},2cos\dfrac{7\pi}{10},2cos\dfrac{5\pi}{6},2cos\dfrac{9\pi}{10}\}$\\
	\vspace*{0.4cm}
	$\textrm{Spec}_{H}(\text{u})=\{2cos\dfrac{\pi}{15},2cos\dfrac{2\pi}{15},2cos\dfrac{4\pi}{15},2cos\dfrac{7\pi}{15},2cos\dfrac{8\pi}{15},2cos\dfrac{11\pi}{15},2cos\dfrac{13\pi}{15},2cos\dfrac{14\pi}{15}\}$\\
	\vspace*{0.4cm}
	$\textrm{Spec}_{H}(\text{h})=\{2cos\dfrac{\pi}{15},2cos\dfrac{2\pi}{15},2cos\dfrac{4\pi}{15},2cos\dfrac{7\pi}{15},2cos\dfrac{8\pi}{15},2cos\dfrac{11\pi}{15},2cos\dfrac{13\pi}{15},2cos\dfrac{14\pi}{15}\}$\\
	\vspace*{0.4cm}
	$\textrm{Spec}_{H}(\text{k})=\{2cos\dfrac{\pi}{12},2cos\dfrac{5\pi}{12},2cos\dfrac{7\pi}{12},2cos\dfrac{11\pi}{12}\}$\\
	\vspace*{0.4cm}
	$\textrm{Spec}_{H}(\text{v})=\{2cos\dfrac{\pi}{18},2cos\dfrac{3\pi}{18},2cos\dfrac{5\pi}{18},2cos\dfrac{7\pi}{18},2cos\dfrac{11\pi}{18},2cos\dfrac{13\pi}{18},2cos\dfrac{15\pi}{18},2cos\dfrac{17\pi}{18}\}$\\
	\vspace*{0.4cm}
	$\textrm{Spec}_{H}(\text{G}_{t})=\{2cos\dfrac{(2k+1)\pi}{2t+4},k=0,\ldots,t+1\}\cup \{2\cos\dfrac{\pi}{4},2\cos\dfrac{3\pi}{4}\}$\\
	\vspace*{0.4cm}
	$\textrm{Spec}_{H}(\text{G}^{t+m}_{t})=\{2cos\dfrac{(2k+1)\pi}{2t+2m+4},k=0,\ldots,t+m+1\}\cup \{2cos\dfrac{(2k+1)\pi}{2t+4},k=0,\ldots,t+1\}$\\
	\vspace*{0.4cm}
	$\textrm{Spec}_{H}(\text{y})=\{2cos\dfrac{\pi}{12},2cos\dfrac{4\pi}{12},2cos\dfrac{5\pi}{12},2cos\dfrac{7\pi}{12},2cos\dfrac{8\pi}{12},2cos\dfrac{11\pi}{12}\}$\\
	\vspace*{0.4cm}
	$\textrm{Spec}_{H}(\text{z})=\{2cos\dfrac{\pi}{30},2cos\dfrac{7\pi}{30},2cos\dfrac{11\pi}{30},2cos\dfrac{13\pi}{30},2cos\dfrac{17\pi}{30},2cos\dfrac{19\pi}{30},2cos\dfrac{23\pi}{30},2cos\dfrac{29\pi}{30}\}$
	\\
	\vspace*{0.4cm}
	$\textrm{Spec}_{H}(\text{D}_n)=\{0\}\cup \{2cos\dfrac{(2k+1)\pi}{2n-2}:k=0,1,\ldots,n-2\}$ \cite[p.38]{haemers}
	\\
	\vspace*{0.4cm}
	$\textrm{Spec}_{H}(\text{w})=\{2cos\dfrac{\pi}{18},2cos\dfrac{5\pi}{18},2cos\dfrac{7\pi}{18},2cos\dfrac{9\pi}{18},2cos\dfrac{11\pi}{18},2cos\dfrac{13\pi}{18},2cos\dfrac{17\pi}{18}\}$
	\\
	\vspace*{0.4cm}
	$\textrm{Spec}_{H}(P_k)=\{2cos\dfrac{r\pi}{k+1},r=1,\ldots,n\}$, \quad $\textrm{Spec}_{H}(C_{n}^{1})=\{2cos\dfrac{(2k+1)\pi}{2n}; k=0,\ldots,n-1\}$.	\\
	
\end{document}